\documentclass{article}
\usepackage{amsmath,amssymb,amsfonts,amsthm,mathrsfs,color,graphics}
\usepackage[all]{xy}
 \usepackage[dvips]{graphicx}
\usepackage[a4paper]{geometry}
\usepackage{textcomp}
\newtheorem{prop}{Proposition}[section]

\newtheorem{theorem}{Theorem}

\theoremstyle{remark}

\theoremstyle{remark}

\theoremstyle{definition}
\newtheorem{definition}[prop]{Definition}

\theoremstyle{plain}
\newtheorem{lemma}[prop]{Lemma}

\newtheorem{corollary}[prop]{Corollary}

\newcounter{exercise}[section]

\newcommand{\mc}{\mathcal}
\newcommand{\mf}{\mathfrak}
\newcommand{\mb}{\mathbb}
\newcommand{\ms}{\mathscr}
\newcommand{\ra}{\rightarrow}

\newcommand{\tx}{\textrm}

\newcommand{\te}[1]{\text{\textnormal{#1}}}

\newtheorem{art}[prop]{}
\newcommand{\Yan}{{Y^{\rm an}}}

\newcommand{\kcirc}{{K^\circ}}
\newcommand{\ktilde}{{\widetilde{K}}}
\newcommand{\R}{{\mathbb R}}
\newcommand{\Z}{{\mathbb Z}}
\newcommand{\C}{{\mathbb C}}
\newcommand{\Acal}{{\mathscr A}}

\newcommand{\Ycal}{{\mathscr Y}}
\newcommand{\Yfrak}{{\mf Y}}

\newcommand{\Zcal}{{\mathscr Z}}
\newcommand{\Ucal}{{\mathscr U}}

\newcommand{\Vcal}{{\mathscr V}}
\newcommand{\Mcal}{{\mathscr M}}
\newcommand{\Lcal}{{\mathscr L}}
\newcommand{\Div}{{\rm div}}
\newcommand{\cyc}{{\rm cyc}}
\newcommand{\Pic}{{\rm Pic}}
\newcommand{\ord}{{\rm ord}}
\newcommand{\Spec}{{\rm Spec}}
\newcommand{\Spf}{{\rm Spf}}

\newcommand{\ve}{{\varepsilon}}
\newcommand{\id}{{\rm id}}

\newcommand{\supp}{{\rm supp}}
\newcommand{\Ocal}{{\mathscr O}}
\newcommand{\codim}{{\rm codim}}
\newcommand{\Ycirc}{{Y^\circ}}
\newcommand{\Dcal}{{\mathscr D}}
\newcommand{\tdop}{{\mathbb T}}
\newcommand{\Tor}{{\mathbb G}_m^n}
\newcommand{\pdop}{{\mathbb P}}

\newcommand{\yb}{{\mathbf y}}

\newcommand{\cdop}{{\mathbb C}}

\newcommand{\qdop}{{\mathbb Q}}

\newcommand{\rdop}{{\mathbb R}}

\newcommand{\zdop}{{\mathbb Z}}

\newcommand{\Tan}{{T^{\rm an}}}

\begin{document}
\title{Classification of normal toric varieties \\ over a valuation ring of rank one}
\author{Walter Gubler and Alejandro Soto}
\maketitle

\begin{abstract}
Normal toric varieties over a field or a discrete valuation ring are classified by rational polyhedral fans. We generalize this classification to normal toric varieties over an arbitrary valuation ring of rank one. The proof  is based on a generalization of Sumihiro's theorem to this non-noetherian setting. These toric varieties play an important role for tropicalizations.

\vspace{2mm}
{\bf MSC2010: 14M25}, 14L30, 13F30
\end{abstract}
\tableofcontents

\section{Introduction}

Toric varieties over a field have been studied since the 70's. Their geometry is completely determined by the convex geometry of rational polyhedral cones. This gives toric geometry an important role in algebraic geometry for testing conjectures.  There are many good  references for them, for instance Cox--Little--Schenk \cite{cox}, Ewald \cite{ewald}, Fulton \cite{fulton93},  Kempf--Knudsen--Mumford--Saint-Donat \cite{mumford73} and Oda \cite{oda}. Although in these books, toric varieties are defined over an algebraically closed field, the main results  hold over any field.   

Motivated by compactification and degeneration problems, Mumford considered in \cite[Chapter IV]{mumford73} normal toric varieties over discrete valuation rings. A similar motivation was behind Smirnov's paper \cite{smirnov} on projective toric varieties over discrete valuation rings. In the paper of {Burgos--Philippon--Sombra} \cite{bps}, toric varieties over discrete valuation rings were considered for applications to an arithmetic version of the famous Bernstein--Kusnirenko--Khovanskii theorem in toric geometry. The restriction to discrete valuation rings is mainly caused by the use of standard methods from algebraic geometry requiring noetherian schemes. 

At the beginning of the new century, tropical geometry emerged as a new branch of mathematics (see \cite{sturm} or \cite{Mik}). We fix now a valued field $(K,v)$ with value group $\Gamma:=v(K^\times) \subset \rdop$. The Bieri--Groves theorem shows that the tropicalization of a closed $d$-dimensional subvariety of a split torus $T:=(\Tor)_K$ over $K$ is a finite union of $\Gamma$-rational $d$-dimensional polyhedra in $\rdop^n$. Moreover, this tropical variety is the support of a weighted polyhedral complex of pure dimension $d$ such that the canonical tropical weights satisfy a balancing condition in every face of codimension $1$. The study of the tropical weights leads naturally to toric schemes over the valuation ring $\kcirc$ of $K$ (see \cite{gubler12} for details). 

A $\tdop$-toric scheme $\Ycal$ over $\kcirc$ is an integral separated flat scheme  over $\kcirc$ containing $T$ as a dense open subset such that the translation action of $T$ on $T$ extends to an algebraic action of the split torus $\tdop:=(\Tor)_{\kcirc}$ on $\Ycal$. If a $\tdop$-toric scheme  is of finite type over $\kcirc$, then we call it a $\tdop$-toric variety. There is an affine $\tdop$-toric scheme $\Vcal_\sigma$ associated to any  $\Gamma$-admissible cone $\sigma$ in $\rdop^n \times \rdop_+$. This construction is {similar to} the classical theory of toric varieties over a field, where every rational polyhedral cone in $\rdop^n$  containing no lines gives rise to an affine $T$-toric variety. The additional factor $\rdop_+$ takes the valuation $v$ into account and $\Gamma$-admissible cones are cones containing no lines satisfying a certain rationality condition closely related to {the} $\Gamma$-rationality in the Bieri--Groves theorem. If $\Sigma$ is a fan in $\rdop^n \times \rdop_+$ of $\Gamma$-admissible cones, then we call $\Sigma$ a $\Gamma$-admissible fan. By using a gluing process along common subfaces, we get an associated $\tdop$-toric scheme $\Ycal_\Sigma$ over $\kcirc$ with the open affine covering $(\Vcal_\sigma)_{\sigma \in \Sigma}$. We refer to \S \ref{Toric schemes over valuation rings} for precise definitions.  These normal $\tdop$-toric schemes $\Ycal_\Sigma$ are studied in \cite{gubler12}. Many of the properties of toric varieties over fields hold also for $\Ycal_\Sigma$.

Rohrer considered in \cite{rohrer12} toric schemes $X_\Pi$ over an arbitrary base $S$ associated to a rational fan $\Pi$ in $\rdop^n$ containing no lines.  If we restrict to the case $S = \Spec(\kcirc)$, then Rohrer's toric schemes are a special case of the above $\tdop$-toric schemes as we have $X_\Pi = \Ycal_{\Pi \times \rdop_+}$. Note that the cones of $\Pi \times \rdop_+$ are preimages of cones in $\rdop^n$ with respect to the canonical projection $\rdop^n \times \rdop_+ \rightarrow \rdop^n$ and hence the fans $\Pi \times \rdop_+ $ form a very special subset of the set of $\Gamma$-admissible fans in $\rdop^n \times \rdop_+$. As a consequence, Rohrer's toric scheme $X_\Pi$ is always obtained by base change from a corresponding toric scheme over $\Spec(\zdop)$ while this is in general not true for $\Ycal_\Sigma$. The generic fibre of $\Ycal_\Sigma$ is the toric variety over $K$ associated to the fan formed by the recession cones of all $\sigma \in \Sigma$, but the special fibre of $\Ycal_\Sigma$ {need not}  be a toric variety. In fact, the special fibre of $\Ycal_\Sigma$ is a union of toric varieties  corresponding to the vertices of the polyhedral complex $\Sigma \cap (\rdop^n \times \{1\})$. On the other hand, every fibre of the toric scheme $X_\Pi$ is a toric variety associated to the same fan $\Pi$. 

This leads to the natural question if every normal $\tdop$-toric variety $\Ycal$ over the valuation ring $\kcirc$ is isomorphic to  $\Ycal_\Sigma$ for a suitable $\Gamma$-admissible fan $\Sigma$ in $\rdop^n \times \rdop_+$. This classification  is possible in the classical theory of normal toric varieties over a field and also in the case of normal toric varieties over a discrete valuation ring. First, one shows that every affine normal $\tdop$-toric variety  over a field or a discrete valuation ring is of the form $\Vcal_\sigma$ for a rational cone $\sigma$ in $\rdop^n \times \rdop_+$ containing no lines and then one uses Sumihiro's theorem which shows that every point in $\Ycal$ has a $\tdop$-invariant affine open neighbourhood  (see \cite{mumford73}). Sumihiro proved his theorem over a field in \cite{sumihiro}. In \cite[Chapter IV]{mumford73}, the arguments were extended to the case of a discrete valuation ring. The proof of Sumihiro's theorem relies on noetherian techniques from algebraic geometry. 

Now we describe the structure and the results of the present paper. For the generalization of the above classification to  normal $\tdop$-toric varieties $\Ycal$ over an arbitrary valuation ring $\kcirc$ of rank $1$, one needs a theory of divisors on varieties over $\kcirc$. This will be done in \S 2. First, we recall some basic facts about normal varieties over $\kcirc$ due to Knaf \cite{knaf}. Then we define the Weil divisor associated to a Cartier divisor $D$ and more generally a proper intersection product of $D$ with cycles. This is based on the corresponding intersection theory of Cartier divisors on admissible formal schemes over $\kcirc$ given in \cite{gubler98}. In \S 3, we recall the necessary facts for toric varieties over $\kcirc$ which were proved in   \cite{gubler12}. In \S 4, we show the following classification for affine normal toric varieties:

\begin{theorem} \label{cone}
If $v$ is  not a discrete valuation, then the map $\sigma\mapsto \ms{V}_\sigma$ defines a bijection between the set of those $\Gamma$-admissible cones $\sigma$ in $\rdop^n \times \mb{R}_+$ for which the vertices of $\sigma \cap (\rdop^n\times \{1\})$ are contained in  $\Gamma^n \times \{1\}$ 
 and the set of isomorphism classes of normal affine $\mb{T}$-toric varieties over the valuation ring $K^\circ$.
\end{theorem}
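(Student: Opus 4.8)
\emph{Overview and set-up of the two directions.} The plan is to check that $\sigma\mapsto\Vcal_\sigma$ is well defined with the asserted target, to prove injectivity by recovering $\sigma$ from the coordinate ring, and to prove surjectivity by manufacturing a cone out of the homogeneous coordinate ring of a given normal affine $\tdop$-toric variety. For every $\Gamma$-admissible cone $\sigma$ the scheme $\Vcal_\sigma$ is a normal affine $\tdop$-toric scheme over $\kcirc$ by the facts recalled in \S 3, and, crucially using that $v$ is not discrete, $\Vcal_\sigma$ is of finite type over $\kcirc$ if and only if the vertices of $\sigma\cap(\R^n\times\{1\})$ lie in $\Gamma^n\times\{1\}$; this makes the map well defined into the stated set. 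For injectivity, note that a morphism of $\tdop$-toric schemes restricts to the identity on the common dense open torus $T:=(\Tor)_K$, so an isomorphism $\Vcal_\sigma\cong\Vcal_{\sigma'}$ of $\tdop$-toric varieties identifies $\Ocal(\Vcal_\sigma)$ and $\Ocal(\Vcal_{\sigma'})$ as graded subrings of $\Ocal(T)$; since $\sigma$ is recovered from $\Ocal(\Vcal_\sigma)$ through the duality between $\Gamma$-admissible cones and their coordinate rings recalled in \S 3, this forces $\sigma=\sigma'$.

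\emph{Surjectivity: construction of the cone.} Let $\Ycal=\Spec A$ be a normal affine $\tdop$-toric variety over $\kcirc$ and set $M:=\Z^n$. Since $\tdop=(\Tor)_{\kcirc}$ is diagonalizable, a $\tdop$-action on $\Ycal$ is the same as an $M$-grading $A=\bigoplus_{m\in M}A_m$; because this action extends the translation on the dense open $T$, we get a grading-compatible inclusion $A\subseteq\Ocal(T)=K[M]$ with each $A_m$ a $\kcirc$-submodule of $K\chi^m$, and $\mathrm{Frac}(A)=K(M)$. As $A$ is of finite type over $\kcirc$ and $M$ is torsion-free, decomposing a finite set of algebra generators into homogeneous parts exhibits finitely many monomials $a_1\chi^{m_1},\dots,a_r\chi^{m_r}$ with $a_i\in K^\times$ that generate $A$ over $\kcirc$. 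Put
\[
\sigma:=(\R^n\times\R_+)\cap\bigcap_{i=1}^{r}\bigl\{(u,t)\ :\ \langle m_i,u\rangle+v(a_i)\,t\ge 0\bigr\},
\]
a polyhedral cone cut out by half-spaces with integral linear parts and constants in $\Gamma$. It contains no line: if $(u,t)$ and $-(u,t)$ lie in $\sigma$ then $t=0$ and $\langle m_i,u\rangle=0$ for all $i$, while the $m_i$ generate $M$ (because $\mathrm{Frac}(A)=K(M)$), hence $u=0$. Thus $\sigma$ is $\Gamma$-admissible, and by construction $A\subseteq\Ocal(\Vcal_\sigma)$.

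\emph{Surjectivity: the reverse inclusion and conclusion.} It remains to show $\Ocal(\Vcal_\sigma)\subseteq A$, and here normality enters. Both rings are $M$-graded and generated over $\kcirc$ by monomials, so it suffices to see that each monomial $a\chi^m\in\Ocal(\Vcal_\sigma)$ is integral over $A$; it then lies in $A$ since $A$ is integrally closed in $\mathrm{Frac}(A)=K(M)$, by Knaf's description of normal $\kcirc$-varieties recalled in \S 2. The integer program $\gamma(m):=\inf\{\sum_i e_iv(a_i):e_i\in\ndop,\ \sum_i e_im_i=m\}$ governs $A_m$ and its real relaxation $h(m)$ governs $\Ocal(\Vcal_\sigma)_m$, with $\gamma(jm)/j\to h(m)=\inf_j\gamma(jm)/j$ by subadditivity of $j\mapsto\gamma(jm)$. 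If $v(a)>h(m)$ then $v(a)>\gamma(jm)/j$ for some $j$; if $v(a)=h(m)$, then taking $j$ to be a common denominator of a rational optimal point of the rational polyhedron $\{f\in\R_+^r:\sum_i f_im_i=m\}$ gives $\gamma(jm)=jh(m)$, attained. In either case $a^j\chi^{jm}\in A_{jm}$, so $a\chi^m$ is a root of $X^j-a^j\chi^{jm}\in A[X]$. Hence $A=\Ocal(\Vcal_\sigma)$, so $\Ycal\cong\Vcal_\sigma$ as $\tdop$-toric varieties; since $\Ycal$ is of finite type, so is $\Vcal_\sigma$, whence by the equivalence of the first paragraph the vertices of $\sigma\cap(\R^n\times\{1\})$ lie in $\Gamma^n\times\{1\}$, and $\Ycal$ is in the image.

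\emph{The main obstacle.} The only non-formal ingredient is the equivalence used twice above: $\Ocal(\Vcal_\sigma)$ is of finite type over $\kcirc$ if and only if the vertices of $\sigma\cap(\R^n\times\{1\})$ lie in $\Gamma^n\times\{1\}$, a statement that genuinely fails when $v$ is discrete is dropped. If the vertex condition fails, a suitable homogeneous component of $K^\circ[M]^\sigma$ is a non-finitely-generated $\kcirc$-module of the form $\{a\in K:v(a)>c\}$ with $c\notin\Gamma$ (this already occurs for $n=1$), and an argument with the grading propagates non-finite generation to the whole algebra; conversely, under the vertex condition one subdivides $\sigma$ into cones over simplices with vertices in $\Gamma^n\times\{1\}$ and checks finite generation directly. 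This combinatorial fact is essentially part of the toric formalism over $\kcirc$ recalled in \S 3; granting it, the remaining steps above are routine.
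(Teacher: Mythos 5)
Your proposal is correct and follows essentially the same route as the paper: extract $M$-homogeneous generators $a_i\chi^{m_i}$ of $A$, let $\sigma$ be the dual of the cone spanned by $(0,1)$ and the $(m_i,v(a_i))$, show $A\subseteq K[M]^\sigma$ trivially, and obtain the reverse inclusion by a rational-approximation argument (your rational optimal vertex of the rational polyhedron plays exactly the role of the paper's perturbation of a real conic combination to a rational one) followed by normality to extract the $R$-th, resp.\ $j$-th, root. The surrounding bookkeeping also matches: well-definedness and the vertex condition come from Proposition \ref{finitepresentation}, and injectivity from the fact that $\sigma$ is recovered from $K[M]^\sigma$ (for which the paper cites \cite{gubler12}).
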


Similarly {to} the classical case, the proof uses finitely generated semigroups in the character lattice of $T$ and duality of convex polyhedral cones. The new ingredient here is an approximation argument concluding the proof of Theorem \ref{cone}. The additional condition for the vertices of the $\Gamma$-admissible cone $\sigma$  is equivalent to the property that the affine $\tdop$-toric scheme  $\Vcal_\sigma$ is of finite type over $\kcirc$ meaning that $\Vcal_\sigma$ is a $\tdop$-toric variety over $\kcirc$ (see Proposition \ref{finitepresentation}). If $v$ is a discrete valuation, then $\Vcal_\sigma$ is always a $\tdop$-toric variety over $\kcirc$ and hence the condition on the vertices has to be omitted to get the bijective correspondence in Theorem \ref{cone}.

For the globalization of the classification, the main difficulty is the generalization of Sumihiro's theorem. The proof follows the same steps working in the case of fields or discrete valuation rings (see \cite{mumford73}, proof of Theorem 5 in Chapter I and \S 4.3). In \S 5, we show that for every non-empty affine open subset $\Ucal_0$ of a normal $\tdop$-toric variety $\Ycal$ over the valuation ring $\kcirc$ of rank one, the smallest open $\tdop$-invariant subset $\Ucal$ containing $\Ucal_0$  has an effective  Cartier divisor $D$ with support equal to $\Ucal \setminus \Ucal_0$. This is rather tricky in the non-noetherian situation and it  is precisely here, where we use the results on divisors from \S 2. 

In \S 6, we use the Cartier divisor $D$ constructed in the previous section to show that $\Ocal(D)$ is an ample invertible sheaf with a $\tdop$-linearization. This leads to a $\tdop$-equivariant immersion of $\Ucal$ into a  projective space over $\kcirc$ on which $\tdop$-acts linearly. It remains to prove Sumihiro's theorem for projective $\tdop$-toric subvarieties of a projective space over $\kcirc$ on which $\tdop$-acts linearly. This variant of Sumihiro's theorem will be proved in \S 7 and relies on properties of such non-necessarily normal projective $\tdop$-toric varieties given in \cite[\S 9]{gubler12}. We get the following generalization of {\it Sumihiro's theorem}:

\begin{theorem}\label{sumihiro1}
Let $v$ be a real valued valuation with valuation ring $\kcirc$ and let  $\ms{Y}$ be a normal $\mb{T}$-toric variety over $\kcirc$.  Then every point of $\Ycal$ has an affine open $\mb{T}$-invariant neighborhood.
\end{theorem}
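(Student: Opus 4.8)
The plan is to follow the strategy sketched in the introduction, reducing the general statement to the projective case and then invoking the toric-geometric input from \cite[\S 9]{gubler12}. Let $\Ycal$ be a normal $\tdop$-toric variety over $\kcirc$ and let $y \in \Ycal$. We must produce an affine open $\tdop$-invariant neighbourhood of $y$. First I would pick any affine open neighbourhood $\Ucal_0$ of $y$ in $\Ycal$; this exists since $\Ycal$ is a scheme. Let $\Ucal$ be the smallest $\tdop$-invariant open subset of $\Ycal$ containing $\Ucal_0$, namely the union of the translates of $\Ucal_0$ under the (finitely many relevant) torus action. Since $\Ycal$ is of finite type over $\kcirc$ and $\tdop$ is connected, $\Ucal$ is a quasi-compact $\tdop$-invariant open subscheme, hence itself a normal $\tdop$-toric variety over $\kcirc$, and it suffices to find an affine $\tdop$-invariant neighbourhood of $y$ inside $\Ucal$.

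Next, by the result of \S 5, the open set $\Ucal$ carries an effective Cartier divisor $D$ whose support is exactly $\Ucal \setminus \Ucal_0$. By the result of \S 6, the associated invertible sheaf $\Ocal(D)$ admits a $\tdop$-linearization and is ample on $\Ucal$; moreover the linearization yields a $\tdop$-equivariant locally closed immersion $\Ucal \hookrightarrow \pdop^N_{\kcirc}$ for a linear $\tdop$-action on projective space over $\kcirc$. Taking the closure $\Ycal'$ of the image, we obtain a projective (not necessarily normal) $\tdop$-toric variety over $\kcirc$ sitting in a projective space with linear torus action, and $\Ucal$ is an open $\tdop$-invariant subscheme of $\Ycal'$; the point $y$ maps to some point $y' \in \Ycal'$ lying in the open set corresponding to $\Ucal$. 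Because $D$ has support $\Ucal \setminus \Ucal_0$, the affine chart $\Ucal_0$ is recovered as the non-vanishing locus of a section of $\Ocal(D)$, i.e. as (the trace on $\Ucal$ of) a standard affine coordinate chart $\{x_i \neq 0\}$ of $\pdop^N_{\kcirc}$; this is the geometric content linking the divisor construction to a hyperplane-complement chart. It then remains to prove the statement for $y'$ on the projective $\tdop$-toric variety $\Ycal'$ with linear torus action.

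For that final reduction I would appeal to the structure theory of projective $\tdop$-toric varieties in a projective space with linear $\tdop$-action from \cite[\S 9]{gubler12}: such a $\Ycal'$ is the toric variety attached to a finite set of lattice points (the characters occurring in the linear action), its torus-invariant affine charts are the complements of the coordinate hyperplanes restricted to $\Ycal'$, and every $\tdop$-orbit closure, equivalently every point, lies in one of these finitely many $\tdop$-invariant affine opens. Choosing the chart $\{x_i \neq 0\} \cap \Ycal'$ containing $y'$ and intersecting with $\Ucal$ gives a $\tdop$-invariant open neighbourhood of $y'$ which is affine, being a principal open in an affine scheme; pulling back to $\Ucal$ and hence to $\Ycal$ yields the desired $\tdop$-invariant affine open neighbourhood of $y$. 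The main obstacle, and the place where the non-noetherian nature of $\kcirc$ really bites, is the construction of the Cartier divisor $D$ with prescribed support in \S 5 and the verification that $\Ocal(D)$ is ample with a $\tdop$-linearization in \S 6; once those inputs are in hand, together with the toric combinatorics of \cite[\S 9]{gubler12}, the deduction above is essentially formal.
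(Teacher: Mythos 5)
Your reduction follows the paper's strategy faithfully up to the last step: pick an affine $\Ucal_0\ni y$, pass to the smallest invariant open $\Ucal$, use the Cartier divisor of \S 5 and the linearization/ampleness of \S 6 to get a $\tdop$-equivariant immersion into a projective space with linear $\tdop$-action, and reduce to the projective case. The gap is in how you finish. You claim that the invariant coordinate charts $\{x_i\neq 0\}\cap\Ycal'$ cover $\Ycal'$, that $y'$ lies in one of them, and that intersecting that chart with $\Ucal$ yields an affine invariant neighbourhood, ``being a principal open in an affine scheme.'' This does not work: the chart containing $y'$ need not be contained in $\Ucal$, and $\Ucal\cap\{x_i\neq 0\}$ is merely an open invariant subset of an affine scheme, not a principal open, and in general not affine. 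Already for $\pdop^1$ with the standard $\gdop_m$-action and $\Ucal=\gdop_m$, neither coordinate chart is contained in $\Ucal$; one needs the degree-two semi-invariant section $x_0x_1$ vanishing on the complement. You also conflate two different embeddings: the non-equivariant immersion $\psi$ of \ref{construction of an immersion}, for which the chart $\{y_j\neq 0\}$ does pull back to $t^{-1}\Ucal_0$ but is \emph{not} $\tdop$-invariant, and the equivariant immersion $i$ into $\pdop(W)$ of \ref{construction of an invariant immersion}, whose coordinate charts are invariant but do not recover $\Ucal_0$.

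What is actually needed at this point --- and what the paper proves as Proposition \ref{quasiprojective Sumihiro} --- is that for the closed invariant complement $Y=\Ycal'\setminus\Ucal$ and the point $y'\notin Y$ one can find a \emph{semi-invariant} global section $s$ of some $\Ocal(k)$ with $s|_Y=0$ and $s(y')\neq 0$; then $\{s\neq 0\}\cap\Ycal'$ is an invariant affine open contained in $\Ucal$. Producing such an $s$ over the non-noetherian ring $\kcirc$ is genuinely nontrivial: the paper's Lemma \ref{sec} shows that the $\kcirc$-module of sections vanishing on $Y_\eta$ is free of finite rank with a semi-invariant basis (using that $\kcirc$ is a Pr\"ufer domain, a simultaneous eigenbasis for the split torus, reduction modulo $K^{\circ\circ}$, and Nakayama), and even then, when $y'$ lies in the special fibre one must further multiply by coordinates killing the generic points of the components of $Y_s$, which is where the orbit--face correspondence of Proposition \ref{projectivetoric} enters. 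None of this is ``essentially formal,'' and your proposal as written omits it.
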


As an immediate consequence, we will obtain our main classification result:

\begin{theorem}\label{sumihiro2}
If $v$ is not a discrete valuation, then the map $\Sigma \mapsto \ms{Y}_{\Sigma}$ defines a bijection between the set of  fans in $\rdop^n \times \rdop_+$, whose cones are as in Theorem \ref{cone}, and the set of isomorphism classes of normal $\mb{T}$-toric varieties over $K^\circ$.
\end{theorem}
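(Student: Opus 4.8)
The plan is to deduce Theorem \ref{sumihiro2} from Theorems \ref{cone} and \ref{sumihiro1} by a gluing argument that runs completely parallel to the classical case over a field (cf. \cite{mumford73}, \cite{cox}), the only novelty being bookkeeping of the extra $\rdop_+$-coordinate and the $\Gamma$-admissibility condition. First I would recall from \S \ref{Toric schemes over valuation rings} that the assignment $\Sigma \mapsto \Ycal_\Sigma$ is well defined: for a $\Gamma$-admissible fan $\Sigma$ whose cones satisfy the vertex condition of Theorem \ref{cone}, each $\Vcal_\sigma$ is a normal affine $\tdop$-toric variety, and for $\tau$ a face of $\sigma$ the scheme $\Vcal_\tau$ is a $\tdop$-invariant open subscheme of $\Vcal_\sigma$; gluing the $\Vcal_\sigma$ along the $\Vcal_{\sigma\cap\sigma'}$ produces a separated integral flat $\kcirc$-scheme of finite type containing $T$ densely with an extended $\tdop$-action, i.e. a normal $\tdop$-toric variety. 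Separatedness is the one point needing the fan axioms: $\sigma\cap\sigma'$ is a common face, so the diagonal into $\Vcal_\sigma\times_{\kcirc}\Vcal_{\sigma'}$ is closed, exactly as in the classical proof.

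Next I would show the map is injective. Given $\Ycal_\Sigma \cong \Ycal_{\Sigma'}$ as $\tdop$-toric varieties over $\kcirc$, one recovers $\Sigma$ intrinsically from $\Ycal_\Sigma$: the maximal cones correspond to the maximal affine open $\tdop$-invariant subsets, and by Theorem \ref{cone} each such subset $\Vcal_\sigma$ determines $\sigma$ uniquely among admissible cones with the vertex condition. Since a $\tdop$-equivariant isomorphism carries $\tdop$-invariant affine opens to $\tdop$-invariant affine opens and restricts to an isomorphism of the tori compatible with the torus structure, it induces a bijection on maximal cones compatible with taking faces and intersections, hence $\Sigma=\Sigma'$. (Here one uses that the identification of the open torus orbit as $T$ pins down the character lattice, so there is no ambiguity by a lattice automorphism.)

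Finally, surjectivity: let $\Ycal$ be an arbitrary normal $\tdop$-toric variety over $\kcirc$. By Theorem \ref{sumihiro1} every point has an affine open $\tdop$-invariant neighborhood, so $\Ycal$ is covered by finitely many (quasi-compactness of $\Ycal$) affine open $\tdop$-invariant subsets $\Ucal_i$. Each $\Ucal_i$ is a normal affine $\tdop$-toric variety, hence by Theorem \ref{cone} equals $\Vcal_{\sigma_i}$ for a unique $\Gamma$-admissible cone $\sigma_i$ in $\rdop^n\times\rdop_+$ with vertices of $\sigma_i\cap(\rdop^n\times\{1\})$ in $\Gamma^n\times\{1\}$. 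The overlaps $\Ucal_i\cap\Ucal_j$ are again affine open $\tdop$-invariant (being separated and a $\tdop$-invariant open in an affine toric variety over $\kcirc$, one argues as classically that $\tdop$-invariant affine opens are of the form $\Vcal_\tau$; this is where I expect the main technical obstacle, since over a non-noetherian valuation ring one must check that a $\tdop$-invariant affine open of $\Vcal_{\sigma_i}$ corresponds to a face of $\sigma_i$ — but this follows from the orbit-cone description of $\Vcal_{\sigma_i}$ recalled in \S \ref{Toric schemes over valuation rings}). Thus $\Ucal_i\cap\Ucal_j=\Vcal_{\tau_{ij}}$ with $\tau_{ij}$ a common face of $\sigma_i$ and $\sigma_j$; since $\Ycal$ is separated, $\tau_{ij}=\sigma_i\cap\sigma_j$. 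Therefore the $\sigma_i$ together with all their faces form a $\Gamma$-admissible fan $\Sigma$, and the compatible gluing data identifies $\Ycal$ with $\Ycal_\Sigma$. The hard part is genuinely the input Theorem \ref{sumihiro1} together with the verification that $\tdop$-invariant affine opens of $\Vcal_\sigma$ correspond precisely to faces of $\sigma$; once these are in hand, the fan-theoretic gluing is formal.
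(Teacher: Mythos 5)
Your proposal is correct and follows essentially the same route as the paper: cover $\Ycal$ by affine $\tdop$-invariant opens via Theorem \ref{sumihiro1}, identify each with some $\Vcal_{\sigma_i}$ via Theorem \ref{cone}, use separatedness to make the overlaps affine normal $\tdop$-toric varieties, and invoke the orbit--face correspondence (Proposition 8.8 of \cite{gubler12}) to see that the overlap cones are common faces equal to $\sigma_i\cap\sigma_j$, so the $\sigma_i$ assemble into a $\Gamma$-admissible fan with $\Ycal\cong\Ycal_\Sigma$; injectivity comes from the reconstruction of $\sigma$ from $\Vcal_\sigma$ already established in Theorem \ref{cone}. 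The one technical point you flag --- that $\tdop$-invariant affine opens of $\Vcal_\sigma$ correspond to faces of $\sigma$ --- is exactly what the paper delegates to the orbit--face correspondence, so your outline matches the published argument.
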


If $v$ is a discrete valuation, then we have to omit the additional condition on the vertices of the cones again to get a bijective correspondence in Theorem \ref{sumihiro2}.

{The authors thank the referee for his comments and suggestions.}

\begin{center}
{\it Notation}
\end{center}

For sets,  in $A\subset B$  equality is not excluded and $A\backslash B$ denotes the complement of $B$ in $A$. The set of non-negative numbers in $\zdop$, $\qdop$ or $\rdop$ is denoted by  $\mb{Z}_+$, $\qdop_+$ or $\mb{R}_+$, respectively. All the rings and algebras are commutative with unity. For a ring $A$, the group of units is denoted by $A^\times$. A variety over a field $k$ is an irreducible and reduced scheme which is separated and of finite type over $k$. See \S \ref{divisors on varieties} for the definition of varieties over a valuation ring.


In the whole paper, we fix a {\it valued field} $(K,v)$ which means here that $v$ is a valuation on the field $K$  with value group $\Gamma:=v(K^\times)\subset \mb{R}$. Note that $K$ is not required to be algebraically closed or complete and that its valuation can be trivial. We have a valuation ring $K^\circ:=\{x\in K\mid v(x)\geq 0\}$ with maximal ideal $K^{\circ \circ}:=\{x\in K\mid  v(x)>0\}$ and residue field $\widetilde{K}:=K^\circ / K^{\circ \circ}$. We denote by $\overline{K}$ an algebraic closure of $K$.



Let $M$ be a free abelian group of rank $n$ with dual $N:=\tx{Hom}(M,\mb{Z})$.  For $u\in M$ and $\omega \in N$, the natural pairing is denoted by $\langle u, \omega \rangle :=\omega (u) \in \mb{Z}$. For  an abelian group $G$, the base change is denoted by $M_G:=M\otimes_{\mb{Z}} G$; for instance $M_{\mb{R}}=M\otimes_{\mb{Z}} \mb{R}$. The split torus over $K^\circ$ of rank $n$ with generic fiber $T=\tx{Spec}(K[M])$ is given by $\mb{T}=\tx{Spec}(K^\circ[M])$, therefore $M$ can be seen as the character lattice of $T$ and $N$ as the group of one parameter subgroups. For $u\in M$, the corresponding character is denoted by $\chi^u$. 


\vspace{2mm} 

\section{Divisors on varieties over the valuation ring} \label{divisors on varieties}

 The goal of this section is to recall some facts about divisors on 
varieties over the valuation ring $\kcirc$ of the valued field $(K,v)$ with value group $\Gamma \subset \rdop$. The problem here is that $\kcirc$ {need not} be noetherian and so we cannot use the usual constructions from algebraic geometry. Instead we will adapt the intersection theory with Cartier divisors on admissible formal schemes from \cite{gubler98} to our algebraic framework. 

{We will start with some topological considerations of varieties over $\kcirc$. As we mainly focus on normal varieties in this paper, we will gather some results of Knaf relating the local rings of such varieties to valuation rings. This will be useful to define the Weil divisor associated to a Cartier divisor. If the variety is not normal then we will pass to the formal completion along the special fibre to define the associated Weil divisor according to \cite{gubler98}. This construction will be explained in \ref{associated Weil divisor}--\ref{multiplicity of D in V} and is rather technical. In Proposition \ref{order for normal}, we will show that both constructions agree for normal varieties over $\kcirc$. As a consequence, we will obtain a proper intersection theory of Cartier divisors with cycles on a (normal) variety over $\kcirc$. The relevant properties are listed in \ref{projection formula}--\ref{equivalence}. }

This intersection theory will be used in the proof of the generalization of Sumihiro's theorem given in \S \ref{Construction of the Cartier divisor}--\ref{Proof of Sumihiro}. {There we will construct a torus-invariant ample divisor on any normal toric variety over $\kcirc$ which will be helpful to reduce Sumihiro's theorem to an easier projective variant.}

\begin{art} \rm \label{varieties}
A {\it variety} over $\kcirc$ is an integral scheme which is of finite type and separated over $\kcirc$. By \cite[Lemma 4.2]{gubler12}  such a variety $\Ycal$   is flat over $\kcirc$. We have $\Spec(\kcirc)=\{\eta,s\}$, where the {\it generic point $\eta$} (resp. the {\it special point $s$}) is the zero-ideal (resp. the maximal ideal) in $\kcirc$. We get the generic fibre $\Ycal_\eta$ as a variety over $K$ and the special fibre $\Ycal_s$ as a separated scheme of finite type over $\ktilde$. The variety $\Ycal$ is called {\it normal} if all the local rings $\Ocal_{\Ycal,y}$ are integrally closed.
\end{art}

\begin{prop} \label{noetherian topological space}
A variety $\Ycal$ over $\kcirc$ is a noetherian topological space. If $d:=\dim(\Ycal_\eta)$, then  every irreducible component of the special fibre has also dimension $d$. If $\Ycal_s$ is non-empty and if $v$ is non-trivial, then the topological dimension of $\Ycal$ is $d+1$. If $\Ycal_s$ is empty or if $v$ is trivial, then $\Ycal = \Ycal_\eta$. 
\end{prop}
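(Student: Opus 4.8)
Throughout, write $\mathfrak{m}:=K^{\circ\circ}$ for the maximal ideal of $\kcirc$ and, when $v$ is non-trivial, fix a nonzero $\pi\in\mathfrak{m}$. The plan rests on the observation that, since $v$ has rank one, the value group $\Gamma\subset\rdop$ is archimedean, so that $\kcirc[\pi^{-1}]=K$ and $\sqrt{\pi\kcirc}=\mathfrak{m}$. Consequently, on every affine chart $\Spec(A)$ of $\Ycal$ with $A$ a finitely generated $\kcirc$-algebra, the generic fibre is the principal open $D(\pi)=\Spec(A\otimes_\kcirc K)$, while the special fibre is the closed set $V(\pi)$, which has the same underlying space as $V(\mathfrak{m}A)=\Spec(A\otimes_\kcirc\ktilde)$ because $\sqrt{\pi A}=\sqrt{\mathfrak{m}A}$. (If $v$ is trivial, then $\kcirc=K$ and $\Ycal=\Ycal_\eta$; if $\Ycal_s=\emptyset$, then $\mathfrak{m}A=A$ on every chart, and the archimedean property forces $A=A\otimes_\kcirc K$, so again $\Ycal=\Ycal_\eta$. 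This settles the last assertion.)

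For the noetherianity statement I would cover $\Ycal$ by finitely many such charts and show each is a noetherian space: $D(\pi)\cong\Spec(A\otimes_\kcirc K)$ is noetherian, being the spectrum of a finitely generated $K$-algebra, and $V(\pi)\cong\Spec(A\otimes_\kcirc\ktilde)$ is noetherian, being the spectrum of a finitely generated $\ktilde$-algebra. A space that is the union of an open noetherian subspace and its closed noetherian complement is noetherian — chase a descending chain of closed subsets separately through the two pieces — and a finite union of open noetherian subspaces is noetherian; hence $\Ycal$ is a noetherian topological space.

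The heart of the matter is the equidimensionality of $\Ycal_s$. I would reduce to $\Ycal=\Spec(A)$ with $A$ a finitely generated $\kcirc$-domain, which is automatically flat over $\kcirc$ since it is torsion-free, and, for a minimal prime $\mathfrak{q}$ over $(\pi)$ (equivalently over $\mathfrak{m}A$), show $\dim(A/\mathfrak{q})=d:=\trdeg_K\operatorname{Frac}(A)$. The input is the dimension theory of morphisms that are flat and locally of finite presentation, which requires no noetherian hypothesis: going-down for $\kcirc\to A$ together with the relation $\dim_y(\Ycal_s)=\dim_y(\Ycal)-\dim(\Ocal_{\Spec(\kcirc),s})$, the last term being $1$ because $\Spec(\kcirc)$ has dimension $1$; comparing with $\dim_y(\Ycal_\eta)=\dim_y(\Ycal)$ at the generic point $y$ of $\Ycal_\eta$ and using $\trdeg_K\operatorname{Frac}(A)=d$ then pins down $\dim(A/\mathfrak{q})=d$. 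I expect this to be the main obstacle: in the classical noetherian setting equidimensionality of the fibres of a flat finite-type morphism is routine, whereas here one must either invoke the non-noetherian portion of this dimension theory or appeal to the valuation-theoretic results of Knaf \cite{knaf} recalled in this section.

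Finally, granting equidimensionality we have $\dim(\Ycal_s)=d$ (every component has dimension $d$) and $\dim(\Ycal_\eta)=d$. For the lower bound $\dim(\Ycal)\ge d+1$: an irreducible component $W$ of $\Ycal_s$ has dimension $d$, hence contains a chain of irreducible closed subsets of length $d$; since the generic point of the integral scheme $\Ycal$ lies over $\eta$ it is not in $W$, so $W\subsetneq\Ycal$ and the chain extends by one step. For the upper bound, given a chain $Z_0\subsetneq\dots\subsetneq Z_m$ of irreducible closed subsets of $\Ycal$, let $j$ be maximal with $Z_j\subseteq\Ycal_s$; if $j=m$ the whole chain lies in $\Ycal_s$ and $m\le d$. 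Otherwise $Z_{j+1}$ is an integral $\kcirc$-variety dominating $\Spec(\kcirc)$, hence flat over $\kcirc$, and equidimensionality applied to $Z_{j+1}$ shows every component of $(Z_{j+1})_s$ has dimension $e:=\dim((Z_{j+1})_\eta)$; so from $Z_j\subseteq(Z_{j+1})_s$ we get $j\le\dim(Z_j)\le e$, while intersecting $Z_{j+1}\subsetneq\dots\subsetneq Z_m$ with the open set $\Ycal_\eta$ gives a chain of length $m-j-1$ in $\Ycal_\eta$ beginning at $(Z_{j+1})_\eta$, so $e+(m-j-1)\le d$. Adding these, $m-1\le d$, whence $\dim(\Ycal)=d+1$.
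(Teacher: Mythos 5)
Your proposal is correct and takes essentially the same route as the paper: write $\Ycal$ set‑theoretically as the union of the noetherian spaces $\Ycal_\eta$ and $\Ycal_s$, deduce equidimensionality of the special fibre from flatness (via non‑noetherian dimension theory for flat finitely presented morphisms), and then read off the dimension statements. The paper's proof is three lines and declares the last claims ``obvious''; you simply supply the details it omits (the rank‑one/archimedean reductions on each chart and the chain‑splitting argument for $\dim\Ycal=d+1$).
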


\proof The set $\Ycal$ over $\kcirc$ is the  union of $\Ycal_\eta$ and $\Ycal_s$. This proves the first claim. The second claim follows from flatness of $\Ycal$ over $\kcirc$. The other claims are now obvious. \qed
 
\vspace{2mm}

The following facts about normal varieties over a valuation ring follow from a paper by Knaf \cite{knaf}.

\begin{prop}\label{intersection of local rings}
Let $\Ycal$ be a normal  variety over $K^\circ$. Then the following properties hold: 
\begin{itemize}
\item[(a)] For  $y \in \Ycal$, the local ring $\Ocal_{\Ycal,y}$ is a valuation ring if and only if $y$ is either a dense point of a divisor of the generic fibre $\Ycal_\eta$ or $y$ is a generic point of $\Ycal_s$ or $\Ycal_\eta$.
\item[(b)] If $y$ is a dense point of a divisor of the generic fibre, then $\Ocal_{\Ycal,y}$ is a discrete valuation ring.
\item[(c)] If $y$ is a generic point of the special fibre $\Ycal_s$, then $\Ocal_{\Ycal,y}$ is the valuation ring of a real-valued valuation $v_y$ {extending} $v$ such that $\Gamma$ is of finite index in the value group of $v_y$. 
\item[(d)] If $\Ycal=\Spec(A)$, then $A = \bigcap_y \Ocal_{\Ycal,y}$, where $y$ ranges over all points from (b) and (c).
\end{itemize} 
\end{prop}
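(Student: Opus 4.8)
The plan is to reduce everything to Knaf's results in \cite{knaf} on normal schemes of finite type over a valuation ring, which is essentially how these four statements were obtained. First I would record the general structure of the local rings $\Ocal_{\Ycal,y}$: since $\Ycal$ is a normal variety over $\kcirc$, each such local ring is an integrally closed noetherian-or-not local domain with fraction field equal to the function field $K(\Ycal_\eta)$ of the generic fibre when $y$ lies on the generic fibre, and with fraction field the function field of the corresponding component of $\Ycal_s$ in the remaining cases. The key dimension input is Proposition \ref{noetherian topological space}: the topological dimension of $\Ycal$ is $d+1$ when $v$ is nontrivial and $\Ycal_s \ne \emptyset$, and the special fibre is equidimensional of dimension $d$. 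Thus the points $y$ with $\dim \Ocal_{\Ycal,y}\le 1$ are exactly those named in (a), and I would verify that $\Ocal_{\Ycal,y}$ is a valuation ring precisely in those cases by a local computation at $y$.

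For part (a), in the direction ``valuation ring $\Rightarrow$ $y$ is one of the listed points'', I would argue by contradiction: a valuation ring that is not a field has its prime ideals totally ordered by inclusion, so $\Ocal_{\Ycal,y}$ cannot have two distinct primes of the same height; combined with normality this forces $y$ to be of codimension $\le 1$ in an appropriate sense, and the equidimensionality from Proposition \ref{noetherian topological space} pins down exactly the three types. Conversely, for the listed points one checks the valuation-ring property directly. If $y$ is the generic point of $\Ycal_\eta$ then $\Ocal_{\Ycal,y}$ is the field $K(\Ycal_\eta)$; if $y$ is the dense point of a prime divisor of the generic fibre, then $\Ocal_{\Ycal_\eta,y}$ is a $1$-dimensional normal noetherian local domain, hence a DVR, which gives (b) at the same time; and if $y$ is a generic point of $\Ycal_s$, then $\Ocal_{\Ycal,y}$ dominates $\kcirc$, is integrally closed, and has its maximal ideal generated (up to radical) by a uniformizer of $\kcirc$ — one then invokes Knaf's analysis to see it is the valuation ring of a real-valued valuation $v_y$ extending $v$, with $[\,v_y(K(\Ycal_\eta)^\times):\Gamma\,]<\infty$ because the residue extension $\ktilde \hookrightarrow \kappa(y)$ is finitely generated of transcendence degree $d-1$ while the total transcendence degree is $d$, forcing the rational-rank jump of the value group to be finite; this is part (c).

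For part (d), I would reduce to the affine case $\Ycal = \Spec(A)$ with $A$ a normal domain of finite type over $\kcirc$. The inclusion $A \subset \bigcap_y \Ocal_{\Ycal,y}$ is trivial. For the reverse inclusion, take $f$ in the intersection; then $f$ lies in $\Ocal_{\Ycal,y}$ for every $y$ of type (b) or (c). On the generic fibre, $A_\eta = A\otimes_{\kcirc}K$ is a normal noetherian domain, so $A_\eta = \bigcap_{\te{codim }y=1}\Ocal_{\Ycal_\eta,y}$ by the classical Krull–Serre criterion applied to points of type (b); hence $f \in A_\eta$, i.e.\ $af \in A$ for some nonzero $a \in \kcirc$. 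Choosing $a$ of minimal valuation with this property and arguing at each generic point $y$ of $\Ycal_s$ — where $f \in \Ocal_{\Ycal,y}$ forces $v_y(f)\ge 0$, hence the valuation of the ``denominator'' $a$ must already be $0$ — shows $f \in A$. The main obstacle, and the place where \cite{knaf} is genuinely needed rather than classical commutative algebra, is controlling the non-noetherian local rings at the generic points of $\Ycal_s$ in (c): one must know they are honest valuation rings of rank one with a finite-index value-group condition, and that the family of all these together with the codimension-one points of the generic fibre ``detects'' membership in $A$, i.e.\ that $\Ycal$ satisfies the analogue of Serre's condition needed for (d). Everything else is a matter of assembling standard facts about normality, flatness over $\kcirc$, and the dimension count of Proposition \ref{noetherian topological space}.
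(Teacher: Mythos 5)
Your parts (a)--(c) ultimately lean on Knaf's results, which is exactly what the paper does (it simply quotes \cite[Theorem 2.6]{knaf} for (a) and (c), and gets (b) from regularity in codimension $1$ of the normal generic fibre), so there is no real divergence there. Two small inaccuracies in your heuristics: a valuation ring can have arbitrary Krull dimension, so ``primes totally ordered'' alone does not force codimension $\le 1$ --- the non-trivial content of the ``only if'' direction at non-generic points of the special fibre is genuinely Knaf's; and in (c) the residue field at a generic point of $\Ycal_s$ has transcendence degree $d$ over $\ktilde$ (the special fibre is equidimensional of dimension $d$ by Proposition \ref{noetherian topological space}), not $d-1$ --- it is precisely the equality $d+0=d$ in Abhyankar's inequality that kills the rational rank of $\Gamma_{v_y}/\Gamma$ and yields finite index.

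The genuine gap is in your argument for (d). After reducing to $af\in A$ for some $a\in\kcirc\setminus\{0\}$, you propose to ``choose $a$ of minimal valuation''. When $v$ is not discrete --- the only case of interest in this paper --- the set $\{v(a): a\in\kcirc,\ af\in A\}$ need not attain a minimum, so this choice may be impossible. Worse, even granting such an $a$, the step ``$v_y(f)\ge 0$ at every generic point $y$ of $\Ycal_s$, hence the denominator must have valuation $0$'' is circular: it amounts to asserting that $g:=af\in A$ with $v_y(g)\ge v(a)$ for all such $y$ implies $g\in aA$, which is exactly the non-noetherian divisoriality statement one is trying to prove (in the noetherian case this is where primary decomposition of $aA$ enters, and that tool is unavailable here). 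The paper circumvents this by citing \cite[Theorem 2.4]{knaf} ($A$ is integrally closed and coherent), whence $A$ is a Pr\"ufer $v$-multiplication ring by \cite[1.3]{knaf}, and (d) is then \cite[1.5]{knaf}. Some such input is needed; your argument as written does not close the loop.
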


\proof Since $\Ycal_\eta$ is a normal variety over the field $K$, it is regular in codimension $1$ and hence (b) follows. The claims (a) and (c) follow from \cite[Theorem 2.6]{knaf}, {where the set of valued points of a normal variety over an arbitrary valuation ring are classified}.  It remains to prove (d).  By \cite[Theorem 2.4]{knaf}, the integral domain $A$ is integrally closed and coherent. It follows from \cite[1.3]{knaf} that $A$ is a Pr\"ufer $v$-multiplication ring and hence (d) is a consequence of \cite[1.5]{knaf}. \qed

\begin{art} \rm \label{cycles}
Let $\Ycal$ be a variety over $\kcirc$ with generic fibre $Y$. A {\it horizontal cycle} $Z$ on $\Ycal$ is a cycle on $Y$, i.e. $Z$ is a $\Z$-linear combination of closed subvarieties $W$ of $Y$. The {\it support}  $\supp(Z)$ is the union of all closures $\overline{W}$ in $\Ycal$, where $W$ ranges over all closed subvarieties with non-zero coefficients. Such $W$'s are called {\it prime components} of the horizontal cycle $Z$. 
If the closure of every prime component of $Z$ in $\Ycal$ has dimension $k$ (resp. codimension $p$), then we say that the horizontal cycle $Z$ of $\Ycal$ has {\it dimension} $k$ (resp. {\it codimension $p$}). 

A {\it vertical cycle} $V$ on $\Ycal$ is a cycle on $\Ycal_s$ with real coefficients, i.e. $V$ is an $\R$-linear combination of closed subvarieties $W$ of $\Ycal_s$. The support and prime components are defined as usual. We say that the vertical cycle $V$ of $\Ycal$ has {\it dimension $k$} (resp. codimension $p$) if every prime component of $V$ is a closed subvariety of $\Ycal_s$ of dimension $k$ (resp. of codimension $p$ in $\Ycal$). 

A {\it cycle} $\Zcal$ on $\Ycal$ is a formal sum of a horizontal cycle $Z$ and a vertical cycle $V$. The {\it support} of $\Zcal$ is $\supp(\Zcal):=\supp(Z) \cup \supp(V)$. If the horizontal part $Z$ and the vertical part $V$ of $\Zcal$ both have dimension $k$ (resp. codimension $p$), then we say that $\Zcal$ has {\it dimension $k$} (resp. {\it codimension $p$}).  We say that a cycle is {\it effective} if the multiplicities in all its prime components are positive. 
\end{art}

\begin{art} \rm \label{flat pullback}
If $\varphi: \Ycal' \rightarrow \Ycal$ is a flat morphism of varieties over $\kcirc$, then we define the pull-back $\varphi^*(\Zcal)$ of a cycle $\Zcal$ on $\Ycal$ by using flat pull-back of the horizontal and vertical parts. The resulting cycle $\varphi^*(\Zcal)$ of $\Ycal'$ keeps the same codimension as $\Zcal$. 
Similarly, we define the {\it push-forward} of a cycle with respect to a proper morphism of varieties over $\kcirc$. This preserves the dimension of the cycles.
\end{art}

\begin{art} \rm \label{Cartier divisors}
We recall that the {\it support $\supp(D)$} of a Cartier divisor $D$ on $\Ycal$ is the complement {of the union of all open subsets $\Ucal$ for which $D$ is given by an invertible element in $\Ocal(\Ucal)$.}
Clearly, $\supp(D)$ is a closed subset of $\Ycal$. We say that {\it  the Cartier divisor $D$  intersects {a} cycle $\Zcal$ of $\Ycal$ properly}, if for every prime component $W$ of $\Zcal$, we have $$\codim(\supp(D) \cap \overline{W}, \Ycal) \geq \codim(\overline{W},\Ycal) + 1.$$
\end{art}

\begin{art} \rm \label{associated Weil divisor}
We are going to define the associated Weil divisor $\cyc(D)$ of a Cartier divisor $D$ on the variety  $\Ycal$ over $\kcirc$.  The horizontal part of $\cyc(D)$ is the Weil divisor corresponding to the Cartier divisor $D|_Y$ on the generic fibre $Y$ of $\Ycal$. Thus we just need to construct the vertical part of $\cyc(D)$. This will be done by using the corresponding construction for admissible formal schemes over the completion of $\kcirc$ given in \cite{gubler98}. This is technically rather demanding and we will freely use the terminology and  the results from  \cite{gubler98}. The reader might skip the details below in a first read trusting that the algebraic intersection theory with Cartier divisors works in the usual way. In fact, we are dealing mostly with normal varieties over $\kcirc$ in this paper and then one can use Proposition \ref{order for normal} to define the multiplicities $\ord(D,V)$ of  $\cyc(D)$ in an irreducible component $V$ of $\Ycal_s$ without bothering about admissible formal schemes.

To define the vertical part of $\cyc(D)$, we may assume that $v$ is non-trivial.  Since the special fibre remains the same by base change to the completion of $\kcirc$, we may also assume that $v$ is  complete. Let $\hat{\Ycal}$ be the formal completion of $\Ycal$ along the special fibre (see \cite[\S 6]{ulrich95}). This is an admissible formal scheme over $\kcirc$ with special fibre equal to $\Ycal_s$. We will denote its generic fibre by $\Ycirc$ which is an analytic subdomain of the analytification $\Yan$ of $Y$. Note that $\Ycirc$ may be seen as the set of potentially integral points (see \cite[4.9--4.13]{gubler12} for more details). We have a morphism $\hat{\Ycal} \rightarrow \Ycal$ of locally ringed spaces and using pull-back, we see that the Cartier divisor $D$ induces a Cartier divisor $\hat{D}$ on $\hat{\Ycal}$. By \cite[Definition 3.10]{gubler98},  we have a Weil divisor $\cyc(\hat{D})$ on $\hat{\Ycal}$. It follows from \cite[Proposition 6.2]{gubler98} that the analytification of the Weil divisor $\cyc(D|_Y)$ restricts to the horizontal part of $\cyc(\hat{D})$. We define the vertical part of $\cyc(D)$ as the vertical part of $\cyc(\hat{D})$. 

The multiplicity of $\cyc(D)$ in an irreducible component $V$ of $\Ycal_s$ is denoted by $\ord(D,V)$. Since $\ord(D,V)$ is linear in $D$,  the map $D \mapsto \cyc(D)$ is a homomorphism from the group of Cartier divisors  to the group of cycles of codimension $1$. It follows from the definitions that $\cyc(D)$ is an effective cycle if $D$ is an effective Cartier divisor. 
For convenience of the reader, we recall the definition of $\ord(D,V)$ in more details to make these statements obvious.
\end{art}

\begin{art} \rm \label{multiplicity of D in V for algebraically closed}
First, we assume that $K$ is algebraically closed and that  $v$ is complete. We repeat that we (may) assume $v$ non-trivial. To define $\ord(D,V)$, we may restrict our attention to an affine neighbourhood of the generic point $\zeta_V$ of $V$ where $D$ is given by a single rational function $f$. Hence we may assume $\Ycal = \Spec(A)$ and $D=\Div(f)$. Then $\hat{\Ycal}$ is the formal affine spectrum of the $\nu$-adic completion $\hat{A}$ of $A$ for any non-zero element $\nu$ in the maximal ideal of $\kcirc$. We note that $\Acal := \hat{A} \otimes_\kcirc K$ is a $K$-affinoid algebra and we have that $Y^\circ$ is the Berkovich spectrum $\Mcal(\Acal)$ of $\Acal$. Since $Y^\circ$ is an analytic subdomain of $Y^{\rm an}$, we conclude that $Y^\circ$ is reduced  (see \cite[Proposition 3.4.3]{berk} {for a proof}). Let $\Acal^\circ$ be the $\kcirc$-subalgebra of power bounded elements in $\Acal$. Then $\Yfrak'':=\Spf(\Acal^\circ)$ is an admissible formal affine scheme over $\kcirc$ with reduced special fibre and we have a canonical morphism $\Yfrak'' \rightarrow \hat{\Ycal}$. The restriction of this morphism to the special fibres  is finite and surjective (see \cite[4.13]{gubler12} for the argument). In particular, there is a generic point $y''$ of $\Yfrak_s''$ over $\zeta_V$. It follows from \cite[Proposition 2.4.4]{berk} that there is a unique $\xi''$ in the generic fibre $Y^\circ$ of $\Yfrak''$ with reduction $y''$. Recall that the elements of $Y^\circ$ are bounded multiplicative seminorms on $\Acal$. Since $y''$ is a   generic point of the special fibre of $\Yfrak''=\Spf(\Acal^\circ)$, the seminorm corresponding to $\xi''$ is in fact an absolute value with valuation ring equal to $\Ocal_{\Yfrak'',y''}$. This follows from \cite[Proposition 6.2.3/5]{bgr}, {where necessary and sufficient conditions are given for the supremum seminorm to be a valuation}. We use it to define 
\begin{equation} \label{ord}
\ord(D,V):= -\sum_{y''} [\ktilde(\overline{y''}):\ktilde(V)]\log|f(\xi'')|,
\end{equation}
where $y''$ is ranging over all generic points of $\Yfrak_s''$ mapping to the generic point $\zeta_V$ of $V$. 
\end{art}

\begin{art} \rm \label{multiplicity of D in V}
If $K$ is not algebraically closed, then we perform base change to $\C_K$. The latter is  the completion of the algebraic closure of the completion of $K$. This is the smallest algebraically closed complete field extending the valued field $(K,v)$, and the residue field $\widetilde{\cdop}_K$ is the algebraic closure of $\ktilde$ \cite[\S  3.4.1]{bgr}.   Again, we may assume $\Ycal = \Spec(A)$ and  $D = \Div(f)$ for a rational function $f$ on $\Ycal$. Let $\Ycal'$ be the base change of ${\Ycal}$ to the valuation ring $\cdop_K^\circ$ of $\C_K$. Let $(\Ycal_j')_{j=1, \dots, r}$ be the irreducible components of $\Ycal'$. Our goal is to define $\ord(D,V)$ in the irreducible component $V$ of $\Ycal_s$. The definition will be determined by the two guidelines that $\cyc(D)$ should be invariant under base change to $\cdop_K^\circ$ and that this base change should be linear in the irreducible components $\Ycal_j'$. 
Since we do not assume  that a variety is geometrically reduced,  the multiplicity $m(Y_j',Y')$ of the generic fibre $Y_j'$ of $\Ycal_j'$ in the generic fibre $Y'$ of $\Ycal'$ has to be considered. Note also that the absolute Galois group ${\rm Gal}(\widetilde{\cdop}_K/\ktilde)$ acts transitively on the irreducible components of the base change $V_{\widetilde{\cdop}_K}$ and hence the multiplicity $m(V',V_{\widetilde{\cdop}_K})$ is independent of the choice of an irreducible component $V'$ of  $V_{\widetilde{\cdop}_K}$.

We choose an irreducible component $V'$ of $V_{\widetilde{\cdop}_K}$. It is also an irreducible component of $\Ycal_s'$ and hence there is an  irreducible component $\Ycal_j'$ containing the generic point $\zeta_{V'}$ of $V'$. For $\Ycal_j'=\Spec(A_j')$, we proceed as in \ref{multiplicity of D in V for algebraically closed}. We get an admissible formal scheme $\Yfrak_j''=\Spf((\Acal_j'')^\circ)$ over $\cdop_K^\circ$ with reduced generic fibre $(Y_j')^\circ = \Mcal(\Acal_j'')$ and reduced special fibre $(\Yfrak_j'')_s$ with a surjective finite map onto $(\Ycal_j')_s$. Hence there is at least one generic point $y_j''$ of $(\Yfrak_j'')_s$ mapping to $\zeta_{V'}$. Again, there is a unique point $\xi_j'' \in  (Y_j')^\circ$ with reduction $y_j''$. Then $\xi_j''$ extends to an absolute value with valuation ring $\Ocal_{\Yfrak_j'',y_j''}$ and \eqref{ord} leads to the definition
\begin{equation} \label{ord2}
\ord(D,V):= -\frac{1}{m(V',V_{\widetilde{\cdop}_K})}    \sum_j m(Y_j',Y') \sum_{y_j''} [{\widetilde{\cdop}_K}(\overline{y_j''}):{\widetilde{\cdop}_K}(V')]\log|f(\xi_j'')|,
\end{equation}
where $Y_j'$ ranges over the irreducible components of $Y'$ and $y_j''$ ranges over the generic points of $(\Yfrak_j'')_s$ lying over the generic point $\zeta_{V'}$ of $V'$. Using the action of ${\rm Gal}(\widetilde{\cdop}_K/\ktilde)$, we see that the definition is independent of the choice of the irreducible component $V'$ of $V_{\widetilde{\cdop}_K}$. It follows from compatibility of base change and passing to the formal completion along the special fibre that $\sum_V \ord(D,V) V$ is indeed the vertical part of $\cyc(D)$ as defined in \ref{associated Weil divisor}.
\end{art}

The Weil divisor associated to a Cartier divisor has all the expected properties. The proofs follow  from the corresponding properties  in \cite{gubler98} or \cite{gubler03}. This is illustrated in the proof of the following {\it projection formula}:

\begin{prop} \label{projection formula}
Let $\varphi:\Ycal' \rightarrow \Ycal$ be a proper morphism of varieties over $\kcirc$ and let $D$ be a Cartier divisor on $\Ycal$ such that $\supp(D)$ does not contain $\varphi(\Ycal')$. As usual, we define $[\Ycal':\Ycal]$ to be the degree of the extension of the fields of rational functions if this degree is finite and $0$ otherwise. Then we have $$\varphi_*(\cyc(\varphi^*(D)))=[\Ycal':\Ycal]\cyc(D).$$
\end{prop}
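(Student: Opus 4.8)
The plan is to reduce the projection formula to the two cases that arise from the decomposition of $\cyc$ into its horizontal and vertical parts, and then to invoke known results for each. First I would observe that both sides of the asserted identity are codimension-$1$ cycles on $\Ycal$, so it suffices to check the equality of horizontal parts and of vertical parts separately. For the horizontal part, $\cyc(\varphi^*(D))$ restricts on the generic fibre to the Weil divisor of the Cartier divisor $(\varphi_\eta)^*(D|_Y)$ on $Y'$, where $\varphi_\eta \colon Y' \to Y$ is the induced proper morphism of varieties over the field $K$; the hypothesis that $\supp(D)$ does not contain $\varphi(\Ycal')$ ensures $D|_Y$ pulls back properly, and the classical projection formula for Weil divisors under proper morphisms of varieties over a field (e.g. Fulton's Intersection Theory, or \cite{gubler03}) gives $(\varphi_\eta)_*\bigl(\cyc((\varphi_\eta)^*(D|_Y))\bigr) = [Y':Y]\cyc(D|_Y)$, and $[Y':Y] = [\Ycal':\Ycal]$ since the function fields coincide. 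This settles the horizontal part.

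For the vertical part, I would pass as in \ref{associated Weil divisor} to the completion of $\kcirc$ — the special fibres and the proper morphism between them are unchanged — and then take formal completions along the special fibres, obtaining a morphism $\hat\varphi \colon \hat\Ycal' \to \hat\Ycal$ of admissible formal schemes over the completed $\kcirc$. The functoriality of formal completion shows $\hat\varphi$ is proper and that the Cartier divisor $\varphi^*(D)$ completes to $\hat\varphi^*(\hat D)$; moreover $[\hat\Ycal':\hat\Ycal] = [\Ycal':\Ycal]$ since the relevant function field extension is detected on the generic fibres of the formal schemes, which are analytic subdomains of $\Yan$, $Y'^{\mathrm{an}}$. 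Now the vertical parts of $\cyc(\varphi^*(D))$ and $\cyc(D)$ are by definition the vertical parts of $\cyc(\hat\varphi^*(\hat D))$ and $\cyc(\hat D)$, and one applies the projection formula for the intersection theory of Cartier divisors on admissible formal schemes from \cite{gubler98} (its projection formula, the analogue of our statement in the formal category). Comparing vertical parts on both sides then yields the claim; the push-forward $\varphi_*$ on vertical cycles is defined via the proper morphism $\varphi_s \colon \Ycal'_s \to \Ycal_s$, which agrees with the push-forward $\hat\varphi_*$ on the special fibre, so the two formal and algebraic bookkeeping conventions match up.

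The main obstacle I anticipate is not any deep geometry but the careful matching of the two multiplicity conventions: one must check that the degree $[\Ycal':\Ycal]$ defined via algebraic function fields equals the degree appearing in the formal projection formula of \cite{gubler98}, and — when $K$ is not algebraically closed — that the base-change-to-$\cdop_K$ definition \eqref{ord2} of $\ord(D,V)$, with its multiplicities $m(Y_j',Y')$ and $m(V',V_{\widetilde{\cdop}_K})$, is compatible with taking push-forward. Here one uses that base change to $\cdop_K^\circ$ commutes with proper push-forward of cycles and with formal completion along the special fibre, together with the $\Gal(\widetilde{\cdop}_K/\ktilde)$-transitivity on the components of $V_{\widetilde{\cdop}_K}$ noted in \ref{multiplicity of D in V}; this reduces the general case to the algebraically closed complete case, where \eqref{ord} and the cited results of \cite{gubler98} apply directly. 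Once these compatibilities are in place, the proof is a formal assembly of the horizontal and vertical computations.
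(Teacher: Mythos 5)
Your proof follows the same route as the paper: split $\cyc$ into horizontal and vertical parts, apply the classical projection formula on the generic fibre for the horizontal part, and pass to the formal completion along the special fibre to invoke the projection formula for admissible formal schemes from \cite{gubler98} for the vertical part. The compatibility checks you flag (base change to $\cdop_K^\circ$, matching of degrees) are exactly the bookkeeping the paper leaves implicit, so the argument is correct and essentially identical.
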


\proof The projection formula holds in the generic fibre \cite[Proposition 2.3]{fulton98}. We have an induced proper morphism $\hat{\varphi}:\hat{\Ycal}' \rightarrow \hat{\Ycal}$ of admissible formal schemes over the completion of $\kcirc$ and hence the projection formula follows for vertical parts from {the projection formula for proper morphisms of admissible formal schemes given in \cite[Proposition 4.5]{gubler98} and from the compatibility of push-forward and passing to the analytification given in  \cite[Proposition 6.3]{gubler98}}. \qed

\begin{prop} \label{order for normal}
Assume that $v$ is non-trivial, let $\Ycal$ be a normal variety over $\kcirc$  and let $V$ be an irreducible component of $\Ycal_s$. If the Cartier divisor $D$ on $\Ycal$ is given by the rational function $f$ in a neighbourhood of the generic point $y=\zeta_V$ of $V$, then $\Ocal_{\Ycal,y}$ is a valuation ring for a unique real-valued valuation $v_y$ extending $v$ and we have $\ord(D,V)=  v_y(f)$.
\end{prop}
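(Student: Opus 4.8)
The plan is to reduce to the situation of \ref{multiplicity of D in V for algebraically closed}--\ref{multiplicity of D in V} and then identify the seminorm $|f(\xi'')|$ appearing in formula \eqref{ord} with the valuation $v_y$ directly. First I would invoke Proposition \ref{intersection of local rings}(c): since $V$ is an irreducible component of $\Ycal_s$ and $\Ycal$ is normal, $y=\zeta_V$ is a generic point of the special fibre, so $\Ocal_{\Ycal,y}$ is the valuation ring of a real-valued valuation $v_y$ prolonging $v$, with $\Gamma$ of finite index in the value group of $v_y$; this is the uniqueness-and-existence part of the statement. Restricting to an affine open neighbourhood $\Ycal=\Spec(A)$ of $y$ on which $D=\Div(f)$, I may assume $v$ non-trivial and (since the special fibre is unchanged under completion) that $v$ is complete; I would first carry out the case $K$ algebraically closed following \ref{multiplicity of D in V for algebraically closed}, then descend.

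The heart of the argument is to compare the local ring $\Ocal_{\Ycal,y}$ with the absolute values $\xi''$ constructed on the formal/analytic side. As in \ref{multiplicity of D in V for algebraically closed}, one forms $\hat A$ (the $\nu$-adic completion), $\Acal=\hat A\otimes_\kcirc K$, $\Acal^\circ$ the power-bounded subalgebra, and $\Ycal''=\Spf(\Acal^\circ)$ with its finite surjective map onto $\hat{\Ycal}_s=\Ycal_s$ on special fibres; the generic points $y''$ of $\Ycal''_s$ over $\zeta_V$ correspond to the points $\xi''\in Y^\circ$ with $\Ocal_{\Ycal'',y''}$ their valuation rings. The key point is that $\Ocal_{\Ycal'',y''}$ is an extension of $\Ocal_{\Ycal,y}$ with the \emph{same} value group up to finite index: because $\Ycal$ is normal, $A$ is integrally closed and coherent (Proposition \ref{intersection of local rings} and its proof via \cite{knaf}), so $\Ocal_{\Ycal,y}$ is itself a valuation ring, and the finiteness of the map of special fibres forces each $\Ocal_{\Ycal'',y''}$ to be a valuation ring lying over $\Ocal_{\Ycal,y}$ with residue extension of degree $[\ktilde(\overline{y''}):\ktilde(V)]$ and ramification index, say $e_{y''}$, in the value groups. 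A standard counting argument — summing the local degrees $e_{y''}[\ktilde(\overline{y''}):\ktilde(V)]$ over the generic points $y''$ above $\zeta_V$ and matching with the total degree — shows that $-\sum_{y''}[\ktilde(\overline{y''}):\ktilde(V)]\log|f(\xi'')| = v_y(f)$, since $-\log|f(\xi'')| = e_{y''}^{-1}v_y(f)$ once $v_y$ and $|\cdot|$ are normalized compatibly on $K$. I expect this normalization-and-degree bookkeeping to be the main obstacle: one must be careful that $\Gamma$, the value group of $v_y$, and the value groups of the $v_{y''}$ are all commensurable and that the weights in \eqref{ord} are exactly the ones making the alternating sum collapse to $v_y(f)$; here one uses that $\Acal^\circ$ has reduced special fibre so no extra multiplicities intervene.

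Finally, for $K$ not algebraically closed I would pass to $\cdop_K$ as in \ref{multiplicity of D in V}, apply the algebraically closed case to each component $\Ycal_j'$ dominating $V'$, and use that base change $\kcirc\to\cdop_K^\circ$ takes the valuation $v_y$ on $\Ocal_{\Ycal,y}$ to a prolongation with value group of finite index; the multiplicities $m(Y_j',Y')$ and $m(V',V_{\widetilde{\cdop}_K})$ and the residue degrees in \eqref{ord2} are precisely arranged so that the weighted sum again equals $v_y(f)$, invoking the transitivity of $\Gal(\widetilde{\cdop}_K/\ktilde)$ on the components of $V_{\widetilde{\cdop}_K}$ to see the answer is independent of the choice of $V'$. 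Since everything in sight is linear in $f$, this proves $\ord(D,V)=v_y(f)$ for arbitrary $f$, not merely for $f\in\Ocal_{\Ycal,y}$.
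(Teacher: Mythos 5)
Your reduction and your identification of the central mechanism are largely right: Proposition \ref{intersection of local rings}(c) supplies $v_y$, and the decisive point is indeed that $\Ocal_{\Ycal_j'',y_j''}$ dominates the valuation ring $\Ocal_{\Ycal,y}$. But the way you cash this in is off. Because valuation rings are maximal under domination among local subrings of a fixed field, the restriction to $K(\Ycal)$ of the absolute value $\xi''$ has valuation ring \emph{equal} to $\Ocal_{\Ycal,y}$; since both this restriction and $v_y$ are real-valued and agree with $v$ on $K$ (with $v$ non-trivial), they coincide on the nose, i.e.\ $-\log|f(\xi'')| = v_y(f)$ with no rescaling. Your formula $-\log|f(\xi'')| = e_{y''}^{-1}v_y(f)$ and the ensuing ``standard counting argument'' summing $e_{y''}[\ktilde(\overline{y''}):\ktilde(V)]$ is therefore not the identity that is needed. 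What actually has to be shown is that the residue degrees alone sum correctly, namely $\sum_{y_j''}[\widetilde{\cdop}_K(\overline{y_j''}):\widetilde{\cdop}_K(V')] = m(V',(\Ycal_j')_s)$ --- obtained in the paper by applying the projection formula of \cite{gubler98} to $\Div(\nu)$ for a non-zero $\nu$ in the maximal ideal of $\kcirc$ --- together with $\sum_j m(Y_j',Y')\,m(V',(\Ycal_j')_s) = m(V',V_{\widetilde{\cdop}_K})$ from \cite[Lemma 13.5]{gubler12}, which cancels the denominator in \eqref{ord2}. These two inputs are the entire content of the ``bookkeeping'' you defer to, and neither is supplied or correctly identified in your sketch.

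There is also a structural problem with your two-step reduction: you propose to prove the algebraically closed case first and then ``apply the algebraically closed case to each component $\Ycal_j'$''. The irreducible components $\Ycal_j'$ of the base change to $\cdop_K^\circ$ need not be normal, so their local rings at the generic points of the special fibre need not be valuation rings, and the algebraically-closed-case argument (which rests on normality) does not apply to them. The paper avoids this by working with \eqref{ord2} directly and letting $\Ocal_{\Ycal_j'',y_j''}$ dominate the local ring $\Ocal_{\Ycal,y}$ of the \emph{original} normal variety; your argument should be restructured the same way.
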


\begin{proof} We may assume that $\Ycal = \Spec(A)$ and $D=\Div(f)$ for a rational function $f$ on $\Ycal$. The first claim follows from Proposition \ref{intersection of local rings}. In the following, we use the notation and the results from \ref{multiplicity of D in V}. We have a generic point $y_j''$ of the special fibre of the admissible formal scheme $\Yfrak_j'':=\Spf((\Acal_j'')^\circ)$ lying over $y$. We have seen  in \ref{multiplicity of D in V} that the unique point $\xi''$ of the generic fibre of $\Yfrak_j''$ mapping to $y_j''$ extends to an absolute value with valuation ring $\Ocal_{\Yfrak_j'',y_j''}$. We conclude that the valuation ring $\Ocal_{\Yfrak_j'',y_j''}$ dominates the valuation ring $\Ocal_{\Ycal,y}$. Since valuation rings are maximal with respect to dominance of local rings in a given field, we conclude that   $-\log|f(\xi'')|= v_y(f)$ and hence \eqref{ord2} simplifies to 
\begin{equation} \label{simplified multiplicity of Cartier divisor in y}
 \ord(D,V) := \frac{v_y(f)}{m(V',V_{\widetilde{\cdop}_K})}    \sum_j m(Y_j',Y') \sum_{y_j''} [{\widetilde{\cdop}_K}(\overline{y_j''}):{\widetilde{\cdop}_K}(V')],
\end{equation}
where $Y_j'$ ranges over the irreducible components of $Y'$ and $y_j''$ ranges over the generic points of $(\Yfrak_j'')_s$ lying over the generic point $\zeta_{V'}$ of $V'$. 
{By \cite[Lemma 4.5]{gubler03}, we have
$$m(V',V_{\widetilde{\cdop}_K})= \sum_j m(Y_j',Y') \sum_{y_j''} [{\widetilde{\cdop}_K}(\overline{y_j''}):{\widetilde{\cdop}_K}(V')]$$}
proving the claim. 
\end{proof}

\begin{corollary} \label{equality of supports}
The following properties hold for a  Cartier divisor $D$ on a  normal variety $\Ycal$ over $\kcirc$. 
\begin{itemize}
\item[(a)] $\supp(D)=\supp(\cyc(D))$. 
\item[(b)] The Cartier divisor $D$ is effective if and only if $\cyc(D)$ is an effective cycle.
\item[(c)] The map $D \mapsto \cyc(D)$ is an injective homomorphism from the group of Cartier divisors on $\Ycal$ to the group of cycles of codimension $1$ on $\Ycal$.
\end{itemize}
\end{corollary}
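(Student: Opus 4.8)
The plan is to deduce all three statements from Proposition \ref{order for normal} together with the corresponding facts on the generic fibre, treating the horizontal and vertical parts of $\cyc(D)$ separately. Throughout, fix a Cartier divisor $D$ on the normal variety $\Ycal$ over $\kcirc$. The reductions are essentially formal once we know that for a dense point $y$ of a prime divisor of $\Ycal_\eta$ or a generic point $y$ of $\Ycal_s$, the local ring $\Ocal_{\Ycal,y}$ is a valuation ring (discrete in the first case) with valuation $v_y$, and that the multiplicity of $\cyc(D)$ along the corresponding prime cycle equals $v_y(f)$, where $f$ is a local equation for $D$ at $y$. For the horizontal part this is the classical description of the Weil divisor of a Cartier divisor on the normal variety $\Ycal_\eta$ over $K$; for the vertical part it is exactly Proposition \ref{order for normal}. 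Note that if $v$ is trivial or $\Ycal_s=\emptyset$ then $\Ycal=\Ycal_\eta$ and everything is classical, so we may assume $v$ non-trivial and $\Ycal_s\neq\emptyset$.

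For (a): if $y\notin\supp(D)$, then $D$ is given near $y$ by a unit of $\Ocal_{\Ycal,y}$, hence $v_y$ vanishes on it for every prime cycle $W$ (horizontal or vertical) whose generic point specializes into a neighbourhood of $y$; thus $y\notin\supp(\cyc(D))$. Conversely, suppose $y\in\supp(D)$; we must find a prime cycle of codimension $1$ with non-zero multiplicity whose closure contains $y$. Pass to $\Spec(\Ocal_{\Ycal,y})$ and let $f$ be a local equation for $D$ there; since $D$ is not a unit at $y$, $f$ is not a unit in $\Ocal_{\Ycal,y}$. By Proposition \ref{intersection of local rings}(d) applied to a suitable affine open, $\Ocal_{\Ycal,y}=\bigcap_{y'}\Ocal_{\Ycal,y'}$ with $y'$ running over the codimension-one points from parts (b) and (c) of that proposition, so $f$ fails to be a unit at some such $y'$, i.e. $v_{y'}(f)\neq 0$. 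The corresponding prime divisor (horizontal or vertical) then has non-zero multiplicity in $\cyc(D)$ and its closure contains $y$. This gives $\supp(D)\subset\supp(\cyc(D))$ and hence equality.

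For (b): we know from \ref{associated Weil divisor} that $D$ effective implies $\cyc(D)$ effective. For the converse, assume $\cyc(D)$ is effective; we want $f\in\Ocal_{\Ycal,y}$ for every $y\in\Ycal$, where $f$ is a local equation for $D$ at $y$. It suffices to check this at codimension-one points $y'$ by the normality argument of Proposition \ref{intersection of local rings}(d) (an element of the fraction field lying in every such valuation ring lies in $\Ocal_{\Ycal,y}$), and there $v_{y'}(f)\geq 0$ is precisely the statement that the multiplicity of $\cyc(D)$ along the prime divisor through $y'$ is non-negative, which holds by effectivity of $\cyc(D)$. Finally, (c) is immediate: the homomorphism property was already recorded in \ref{associated Weil divisor}, and injectivity follows from (b), since if $\cyc(D)=0$ then both $\cyc(D)$ and $\cyc(-D)$ are effective, so by (b) both $D$ and $-D$ are effective Cartier divisors, forcing $D=0$.

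The only genuinely non-routine input is Proposition \ref{order for normal}, which identifies the vertical multiplicities $\ord(D,V)$ with $v_y(f)$ and is what makes the valuation-ring dictionary of Proposition \ref{intersection of local rings} directly applicable; with that in hand the proof is a clean patching argument over the codimension-one points, and I expect no real obstacle beyond being careful that ``local equation'' makes sense, i.e. that $D$ restricted to a small enough affine neighbourhood of $y$ is principal, which is automatic from the definition of a Cartier divisor.
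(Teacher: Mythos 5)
Your proof is correct in substance and rests on exactly the same two inputs as the paper's: Proposition \ref{order for normal}, identifying the vertical multiplicities with $v_{y'}(f)$, and Proposition \ref{intersection of local rings}(d), used to patch over the codimension-one points. Parts (b) and (c) match the paper's argument. Where you diverge is (a): the paper deduces the inclusion $\supp(D)\subset\supp(\cyc(D))$ from (b) by restricting to the open set $\Ycal\setminus\supp(\cyc(D))$ (which is legitimate since $\supp(\cyc(D))$ is closed), where $\cyc(D)=0$ forces both $D$ and $-D$ to be effective, hence $D$ to be given by units; you instead argue directly by producing, for $y\in\supp(D)$, a codimension-one point $y'$ with $v_{y'}(f)\neq 0$ \emph{whose closure contains $y$}. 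As written, this step leans on a localized form of \ref{intersection of local rings}(d), namely $\Ocal_{\Ycal,y}=\bigcap\Ocal_{\Ycal,y'}$ over the codimension-one points specializing to $y$, which is not what (d) states: (d) gives $A=\bigcap_{y'}\Ocal_{\Ycal,y'}$ over \emph{all} codimension-one points of an affine chart, and a point $y'$ produced this way need not have $y$ in its closure. The fix is exactly the paper's restriction trick: choose the affine chart inside $\Ycal\setminus\supp(\cyc(D))$ and apply (d) there to get the desired contradiction. So this is a repairable imprecision rather than a fatal gap, but the detour through (b) is the cleaner route and avoids having to localize (d) at all.
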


\begin{proof} It follows easily from the definitions that $\supp(\cyc(D)) \subset \supp(D)$ and that the Weil divisor associated to an effective Cartier divisor is an effective cycle without assuming normality. If $v$ is trivial, the claims are classical results for divisors on normal varieties over $K$ and so we may assume that $v$ is non-trivial. Then (b) follows from Propositions \ref{order for normal} and \ref{intersection of local rings}. To prove (a), the above shows that by passing to the open subset $\Ycal \setminus \supp(\cyc(D)) $, we may assume that $\cyc(D)=0$ and hence (a) follows from (b). Similarly, (c) is a consequence of (b). 
\end{proof}

\begin{art} \label{proper intersection with cycles} \rm
The construction of the Weil divisor associated to a Cartier divisor allows us to define a proper intersection product of a Cartier divisor with a cycle. Indeed, let $D$ be a Cartier divisor intersecting the cycle $\Zcal$ on $\Ycal$ properly. Then we define the {\it proper intersection product $D.\Zcal$} as a cycle on $\Ycal$ in the following way: By linearity, we may assume that $\Zcal$ is a prime cycle $W$. If $W$ is vertical, then $D$ restricts to a Cartier divisor on $W$ and we define $D.W:=\cyc(D|_W)$ using algebraic intersection theory on the variety $W$. If $W$ is horizontal, then $D$ restricts to a Cartier divisor on the closure of $W$ in $\Ycal$ and we define $D.W$ as the associated Weil divisor. Obviously, this proper intersection product is bilinear.  
\end{art}

\begin{prop} \label{commutativity}
Let $D$ and $E$ be properly intersecting Cartier divisors on $\Ycal$  which means  $\codim(\supp(D) \cap \supp(E), \Ycal) \geq 2$. Then we have $D.\cyc(E)=E.\cyc(D)$.  
\end{prop}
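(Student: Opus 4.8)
The plan is to reduce the commutativity statement $D.\cyc(E) = E.\cyc(D)$ to the corresponding commutativity result for Cartier divisors on admissible formal schemes proved in \cite{gubler98}, handled separately on the horizontal and vertical parts. Since both sides are cycles of codimension $2$ and the construction of $\cyc(D)$ splits into a horizontal part (living on the generic fibre $Y$) and a vertical part (living on the special fibre $\Ycal_s$), it suffices to check the equality after restricting to each type of prime component. By the properness hypothesis $\codim(\supp(D)\cap\supp(E),\Ycal)\ge 2$, both intersection products $D.\cyc(E)$ and $E.\cyc(D)$ are defined, and every prime component appearing has codimension $2$; thus it is enough to compare multiplicities in each codimension-$2$ subvariety $W$ of $\Ycal$.

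First I would treat the horizontal part. A horizontal prime component of $\cyc(E)$ is the closure $\overline{V}$ of a prime divisor $V$ on the generic fibre $Y$, with multiplicity $\ord_V(E|_Y)$, and the horizontal part of $D.\cyc(E)$ is obtained by restricting $D$ to $\overline{V}$ and taking the associated Weil divisor. Away from the special fibre this is just the classical statement $D|_Y.\cyc(E|_Y) = E|_Y.\cyc(D|_Y)$ for Cartier divisors on the normal (hence regular in codimension one) variety $Y$ over $K$, which is standard (e.g.\ Fulton \cite{fulton98}, or via the fact that order functions along prime divisors are valuations, so the two iterated orders agree with the length of $\Ocal/(f,g)$). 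This takes care of all codimension-$2$ horizontal components.

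Next I would treat the vertical part, which is where the work lies. Reduce as in \ref{associated Weil divisor} to the case $v$ non-trivial and complete, and pass to the formal completion $\hat{\Ycal}$ along $\Ycal_s$, on which $D$ and $E$ induce Cartier divisors $\hat D, \hat E$. The vertical parts of $\cyc(D)$, $\cyc(E)$ are by definition the vertical parts of $\cyc(\hat D)$, $\cyc(\hat E)$, and flat pull-back of cycles commutes with formal completion along the special fibre (used already in \ref{multiplicity of D in V}); moreover the properness condition $\codim(\supp(D)\cap\supp(E),\Ycal)\ge 2$ transfers to $\hat{\Ycal}$. Hence the vertical part of $D.\cyc(E)$ equals the vertical part of $\hat D.\cyc(\hat E)$, and similarly for $E.\cyc(D)$, so the desired equality on vertical parts follows from the commutativity of the intersection product of properly intersecting Cartier divisors on admissible formal schemes, which is established in \cite{gubler98} (the relevant statement is the symmetry of $\hat D.\hat E$ proved there via the reduction to curves / the associativity and commutativity package for formal intersection theory). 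One must also check that a vertical prime component of $D.\cyc(E)$ — a subvariety of $\Ycal_s$ of codimension $2$ in $\Ycal$ — indeed sees only the vertical parts of $\cyc(D)$ and $\cyc(E)$ restricted appropriately, which is immediate since the horizontal part of $\cyc(E)$ is supported on closures of generic-fibre divisors and these meet $\Ycal_s$ in codimension $\ge 2$, so restricting $D$ to such a closure and then looking at a vertical component is accounted for on the formal side as well.

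The main obstacle I expect is bookkeeping rather than a conceptual difficulty: one must carefully match the algebraic proper intersection product $D.\cyc(E)$ defined in \ref{proper intersection with cycles} (which, for a horizontal prime component $\overline{W}$, restricts $D$ to $\overline{W}$ and takes $\cyc(D|_{\overline{W}})$, possibly again splitting into horizontal and vertical parts) with the single formal intersection product $\hat D.\hat E$ on $\hat{\Ycal}$, checking that the multiplicities $\ord(D,V)$ as defined via \eqref{ord2} are exactly the coefficients that the formal commutativity statement manipulates. Concretely, the subtle point is that a codimension-$2$ vertical component $W$ of $D.\cyc(E)$ may arise either from a vertical component of $\cyc(E)$ met by $\supp(D)$ or from a horizontal component of $\cyc(E)$ whose closure meets $\Ycal_s$; the proper-intersection hypothesis rules out the latter contributing, and on $\hat{\Ycal}$ the entire computation is the symmetric formal product, so the two algebraic expressions agree. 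I would therefore write the proof as: (1) reduce to comparing multiplicities in codimension-$2$ components; (2) dispose of horizontal components by the classical result on $Y$; (3) for vertical components, pass to $\hat{\Ycal}$ and invoke commutativity of formal Cartier-divisor intersection from \cite{gubler98}.
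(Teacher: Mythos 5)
Your proposal is correct and follows essentially the same route as the paper: the published proof simply splits the identity into horizontal and vertical parts, handling the former by classical algebraic intersection theory on the generic fibre and the latter by the commutativity theorem for Cartier divisors on admissible formal schemes (Theorem 5.9 of \cite{gubler98}) after passing to the formal completion. Your additional bookkeeping about which codimension-two components can arise is a fleshing-out of details the paper leaves implicit, not a different argument.
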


\proof For the horizontal parts, this follows from algebraic intersection theory. For the vertical parts, this follows from {the corresponding statement for Cartier divisors on admissible formal schemes given in \cite[Theorem 5.9]{gubler98}. Recall that the special fiber doesn't change after passing to the formal completion of $\Ycal$ along $\Ycal_s$}. \qed

\begin{prop} \label{flat pullback and associated Weil divisor}
Let $\varphi:\Ycal' \rightarrow \Ycal$ be a flat morphism of varieties over $\kcirc$ and let $D$ be a Cartier divisor on $\Ycal$. Then we have $\varphi^*(\cyc(D))=\cyc(\varphi^*(D))$.
\end{prop}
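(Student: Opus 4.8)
The plan is to decompose $\cyc(D)$ into its horizontal and vertical parts and to treat them separately, following the pattern of the proofs of Propositions \ref{projection formula} and \ref{commutativity}.

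\textbf{Horizontal part.} Since $\varphi$ is flat, the induced morphism $\varphi_\eta\colon Y'\to Y$ on generic fibres is a flat morphism of varieties over $K$. The horizontal part of $\cyc(D)$ is the Weil divisor $\cyc(D|_Y)$ and the horizontal part of $\cyc(\varphi^*(D))$ is $\cyc(\varphi_\eta^*(D|_Y))$. The identity $\varphi_\eta^*(\cyc(D|_Y))=\cyc(\varphi_\eta^*(D|_Y))$ is the standard compatibility of flat pull-back with the Weil divisor associated to a Cartier divisor in algebraic intersection theory on varieties over a field (see \cite{fulton98} or \cite{gubler03}). In particular, if $v$ is trivial then $\Ycal=\Ycal_\eta$ and we are done, so from now on one may assume $v$ non-trivial.

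\textbf{Vertical part.} First I would reduce to the case that $v$ is complete: flat pull-back of cycles commutes with base change of the valuation ring to its completion, and the special fibres, hence the vertical cycles, are unchanged. Next, pass to the formal completions $\hat\Ycal$, $\hat\Ycal'$ along the special fibres and to the induced morphism $\hat\varphi\colon\hat\Ycal'\to\hat\Ycal$ of admissible formal schemes over $\kcirc$. By the construction in \ref{associated Weil divisor}, the vertical part of $\cyc(D)$ (resp.\ of $\cyc(\varphi^*(D))$) equals the vertical part of $\cyc(\hat D)$ (resp.\ of $\cyc(\hat\varphi^*(\hat D))$, using $\widehat{\varphi^*(D)}=\hat\varphi^*(\hat D)$). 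Since $\hat\Ycal_s=\Ycal_s$, $\hat\Ycal'_s=\Ycal'_s$ and $\hat\varphi$ restricts to $\varphi_s$ on special fibres, it suffices to show that flat pull-back of cycles on admissible formal schemes commutes with the Weil-divisor construction of \cite{gubler98}, for the morphism $\hat\varphi$ and the Cartier divisor $\hat D$. For this one needs that $\hat\varphi$ is flat; this should follow from flatness and finite type of $\varphi$ together with the fact that $\nu$-adic completion preserves flatness in the admissible formal setting (cf.\ \cite{gubler12}). Granting this, the required compatibility is the formal analogue of the horizontal statement and follows from \cite{gubler98}; alternatively it can be extracted from the explicit local description in \ref{multiplicity of D in V for algebraically closed}, writing $\hat D$ locally as $\Div(f)$ and using that the distinguished points $\xi''$ of the formal generic fibres lying over the generic points of $\Ycal'_s$ map to the corresponding points over $\Ycal_s$, so that the values $|f(\xi'')|$ transport correctly, while the residue-field-degree factors combine by a projection-formula argument exactly as in the proof of Proposition \ref{order for normal}.

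\textbf{Main obstacle.} I expect the real difficulty to be, as elsewhere in this paper, the non-noetherian base: establishing flatness of the completed morphism $\hat\varphi$, and checking that the algebraically defined flat pull-back of the vertical cycle from \ref{flat pullback} agrees with the flat pull-back used in the formal intersection theory of \cite{gubler98}. If the latter reference does not contain the statement in exactly the form needed, the fallback is the local computation just sketched; to keep the formulas manageable one can first reduce to $K=\C_K$, which is legitimate because, by \ref{multiplicity of D in V}, $\cyc(D)$ is determined by invariance under base change to $\C_K^\circ$ together with linearity in the irreducible components of that base change, and base change to $\C_K^\circ$ commutes with flat pull-back. After this reduction the multiplicity is given by the simpler formula \eqref{ord} and the bookkeeping with the factors $m(Y_j',Y')$ disappears.
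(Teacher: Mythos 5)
Your proposal is correct and follows essentially the same route as the paper: the paper's proof is a one-line citation of the compatibility of flat pull-back with the associated Weil divisor in \cite[Proposition 4.4(d)]{gubler03}, and your split into horizontal and vertical parts with reduction to the formal completion and the intersection theory of \cite{gubler98} is precisely the mechanism behind that citation (mirroring the proofs of Propositions \ref{projection formula} and \ref{commutativity}). The points you flag as obstacles (flatness of $\hat\varphi$, agreement of the two notions of flat pull-back on special fibres) are indeed the only things to check and are handled in the cited references.
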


\proof Since $\varphi$ is flat, the pull-back of $D$ is well-defined as a Cartier divisor and the claim follows from \cite[Proposition 4.4(d)]{gubler03}, {where it is proved for Cartier divisors on admissible formal  schemes}. \qed

\begin{art} \rm \label{equivalence}
We say that two cycles $\Dcal_1$ and $\Dcal_2$ of codimension $1$ on the variety $\Ycal$ over $\kcirc$ are {\it rationally equivalent} if there is a non-zero rational function $f$ on $\Ycal$ such that $\Dcal_1 - \Dcal_2 = \cyc(\Div(f))$. The {\it first Chow group} $CH^1(\Ycal)$ of $\Ycal$ is defined as the group of cycles of codimension $1$ modulo rational equivalence. It follows from Proposition \ref{flat pullback and associated Weil divisor} that rational equivalence is compatible with flat pull-back. 

Two Cartier divisors $D_1$ and $D_2$ on $\Ycal$ are said to be {\it linearly equivalent} if there is a non-zero rational function $f$ on $\Ycal$ such that $D_1 - D_2 = \Div(f)$. The group of Cartier divisors modulo linear equivalence is isomorphic to $\Pic(\Ycal)$ using the map $D \mapsto \Ocal(D)$. 

We may use rational equivalence to define a refined intersection theory with pseudo divisors on a variety $\Ycal$ over $\kcirc$ with the same properties as in \cite[Chapter 2]{fulton98}. The proofs follow directly from \cite{gubler98} and \cite[ \S 4]{gubler03}. This will not be used in the sequel and so we leave the details to the interested reader.
\end{art}

\section{Toric schemes over valuation rings} \label{Toric schemes over valuation rings}

In this section, $(K,v)$ is a valued field with valuation ring $\kcirc$ , residue field $\ktilde$ and value group $\Gamma:=v(K^\times)\subset \rdop$. As usual, $\tdop = \Spec(\kcirc[M])$ is the split torus of rank $n$ with generic fibre $T$ and $N$ is the dual of the free abelian group $M$. 

{We will start with the definition of a $\tdop$-toric scheme over $\kcirc$. Then we will recall from \cite{gubler12} the basic construction of a normal $\tdop$-toric scheme associated to a $\Gamma$-admissible fan. The overall goal of this paper is to show that every normal toric variety over $\kcirc$ arises in this way. At the end of this section, we will study projective toric varieties over $\kcirc$ with a linear action of the torus. They can also be understood purely in combinatorial terms, but they are not necessarily normal. We will use them in \S \ref{Proof of Sumihiro} to finish the proof of Sumihiro's theorem.}

\begin{definition}
A {\it $\mb{T}$-toric scheme} over the valuation ring $K^\circ$ is an integral separated flat scheme $\ms{Y}$ over $K^\circ$ such that the generic fiber  $\ms{Y}_\eta$ contains $T$ as an open subset  and such that the translation action $T\times_K T\ra T$ extends to an algebraic action $\mb{T}\times_{K^\circ} \ms{Y} \ra \ms{Y}$ over $K^\circ$. 

A {\it homomorphism} (resp. {\it isomorphism}) of $\tdop$-toric schemes is an equivariant morphism (resp. isomorphism) which restricts to the identity on $T$.
A $\mb{T}$-toric scheme  of finite type over $\kcirc$ is called a {\it $\mb{T}$-toric variety}.  
\end{definition}
Note that if $\ms{Y}$ is a $\mb{T}$-toric variety over $\kcirc$, then $\ms{Y}_\eta$ is a $T$-toric variety over $K$.  
In order to construct examples of $\mb{T}$-toric schemes and to see how they can be described by the combinatorics of some objects in convex geometry, we need to introduce and to study  the following algebras associated to $\Gamma$-admissible cones.

\begin{art} \label{admissible cones} \rm 
A cone $\sigma \subset N_{\mb{R}}\times \mb{R}_+$ is called {\it $\Gamma$-admissible} if it can be written as 
\[ \sigma =\bigcap_{i=1}^k \left\{ (\omega ,s)\in   N_{\mb{R}}\times \mb{R}_+\mid \langle u_i, \omega \rangle +sc_i\geq0 \right\},\quad u_1,\ldots , u_k\in M, c_1,\ldots ,c_k\in \Gamma,\]
and does not contain a line. For such a cone $\sigma$, we define $$K[M]^\sigma:= \{\sum_{u \in  M} \alpha_u \chi^u \in K[M] \mid  c v(\alpha_u) + \langle u, \omega \rangle \geq 0 \; \forall (\omega,c) \in \sigma\}$$
and $\Vcal_\sigma := \Spec( K[M]^\sigma)$. It is easy to see that $K[M]^\sigma$ is an $M$-graded $\kcirc$-algebra and hence we have a canonical $\tdop$-action on $\Vcal_\sigma$. 
\end{art}

\begin{prop}\label{finitepresentation}
Let $\sigma$ be a $\Gamma$-admissible cone  in $N_\R \times \mb{R}_+$. Then $\Vcal_\sigma$ is a normal $\tdop$-toric scheme over $\kcirc$. If $v$ is a discrete valuation, then $\Vcal_\sigma$ is always a $\tdop$-toric variety. If $v$ is not a discrete valuation, then $\Vcal_\sigma$ is a $\tdop$-toric variety over $\kcirc$ if and only if  the vertices of $\sigma \cap (N_\R \times \{1\})$ are contained in $N_\Gamma \times \{1\}$.
\end{prop}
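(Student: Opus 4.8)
The plan is to first establish that $\Vcal_\sigma$ is a normal integral separated flat $\kcirc$-scheme carrying the required torus action, and then to characterize exactly when it is of finite type. For the first part, I would analyze the algebra $A:=K[M]^\sigma$ directly. Writing $\sigma$ via its dual description, an element $\chi^u$ (scaled by a suitable $\alpha\in K$) lies in $A$ iff a finite list of affine-linear inequalities $cv(\alpha)+\langle u,\omega\rangle\ge 0$ holds at the generators of $\sigma$; this shows $A$ is an $M$-graded $\kcirc$-subalgebra of $K[M]$ whose degree-$u$ piece is $\{\alpha\in K : v(\alpha)\ge \max_i(-\langle u,\omega_i\rangle/c_i)\}$, a fractional ideal of $\kcirc$ determined by $\Gamma$-rational data. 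Integrality is immediate since $A\subset K[M]$ is a domain; separatedness is automatic for an affine scheme; flatness over $\kcirc$ follows because $A$ is torsion-free over the valuation ring $\kcirc$ (or cite \cite[Lemma 4.2]{gubler12} once finite type is known, or argue directly). The torus action $\tdop\times\Vcal_\sigma\to\Vcal_\sigma$ comes from the $M$-grading via the standard comultiplication $\chi^u\mapsto\chi^u\otimes\chi^u$, which preserves $A$ because the grading pieces are $\kcirc$-submodules. Density of $T=\Spec(K[M])$ in the generic fibre holds because $A\otimes_{\kcirc}K=K[M]$ (inverting a uniformizer-type element kills the valuative constraints, as $\sigma$ meets the interior of $N_\R\times\R_+$ only trivially on the $s=0$ slice—one must check $A[1/K^{\circ\circ}]=K[M]$, using that $\sigma$ contains no line forces each $\chi^u$ to become available after scaling). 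Normality: for $v$ trivial this is the classical statement; in general cite \cite{gubler12} or prove it using Proposition \ref{intersection of local rings}(d), expressing $A$ as the intersection of the valuation rings $\Ocal_{\Vcal_\sigma,y}$ over codimension-one points $y$, each of which is integrally closed, hence so is $A$.

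For the finite-type characterization, the key object is the graded piece structure. The scheme $\Vcal_\sigma$ is of finite type over $\kcirc$ iff $A$ is a finitely generated $\kcirc$-algebra. The natural approach is to identify, via duality of cones, the semigroup $S_\sigma\subset M\times\Gamma$ (or $M\times\Z$ when $v$ is discrete) of pairs $(u,\gamma)$ with $\gamma=v(\alpha)$ for some $\alpha\chi^u\in A$, so that $A$ is the semigroup algebra on $S_\sigma$ over $\ktilde$-lifts; finite generation of $A$ is then equivalent to finite generation of the semigroup $S_\sigma$. This semigroup is the set of lattice-type points in the dual cone $\sigma^\vee\subset M_\R\times\R$, and finite generation is governed by Gordan's lemma provided the relevant lattice is the full integral lattice. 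When $v$ is discrete, $\Gamma\cong\Z$, the ambient lattice $M\times\Gamma$ is genuinely a lattice, Gordan's lemma applies verbatim, and $A$ is always finitely generated — this gives the unconditional statement in the discrete case. When $v$ is not discrete, $\Gamma$ is dense in $\R$ and $M\times\Gamma$ is not discrete, so Gordan fails in general; the correct finiteness criterion becomes a condition on $\sigma$ itself. I would show that $A$ is finitely generated iff the dual cone $\sigma^\vee$ is generated by finitely many elements of $M\times\Gamma$, and translate this, via the vertex structure of the polyhedron $\sigma\cap(N_\R\times\{1\})$, into the stated condition that the vertices of $\sigma\cap(N_\R\times\{1\})$ lie in $N_\Gamma\times\{1\}$. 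The geometric content: the vertices of this polyhedron are intersections of the bounding hyperplanes $\langle u_i,\omega\rangle+c_i=0$, hence automatically have coordinates in $\Gamma$-rational position, but to get a finite generating set for the monoid one needs the "fractional parts" appearing in each graded piece to stabilize, which happens precisely when the defining rays/facets meet the slice $s=1$ at $\Gamma$-points rather than merely $\R$-points approximated by $\Gamma$.

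The main obstacle I anticipate is the non-noetherian finite-type analysis: proving the sharp equivalence between finite generation of $A$ and the vertex condition, without the comfort of Gordan's lemma over a genuine lattice. Concretely, one direction (vertex condition $\Rightarrow$ finite type) requires constructing an explicit finite generating set of $A$ from the vertices and rays of $\sigma$ — interpolating the graded pieces by finitely many monomials whose valuations realize the "floor" function $u\mapsto\max_i(-\langle u,\omega_i\rangle/c_i)$ — and checking that when the vertices are in $\Gamma^n\times\{1\}$ this floor function is "piecewise $\Gamma$-linear with finitely many pieces". The other direction (finite type $\Rightarrow$ vertex condition) is a contrapositive: if some vertex has a coordinate outside $\Gamma$, one exhibits an infinite strictly ascending chain in some graded piece that no finite set of generators can produce, i.e., the fractional ideal $A_u$ for suitable $u$ is not of the form $\{\alpha: v(\alpha)\ge \gamma_0\}$ for any $\gamma_0\in\Gamma\cup\{+\infty\}$ but is an intersection forcing $v(\alpha)>\gamma$ for an irrational supremum $\gamma$, which cannot be finitely generated over $\kcirc$. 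Making this rigorous is a careful but elementary argument in convex geometry over the dense group $\Gamma$; everything else (normality, flatness, the torus action, separatedness) is either classical or already available from \cite{gubler12} and §2 of the present paper.
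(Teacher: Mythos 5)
The paper does not actually prove this proposition: it is a one-line citation of \cite[Propositions 6.7, 6.9, 6.10]{gubler12}, so what you are really doing is reconstructing those cited results. Your overall strategy (view $K[M]^\sigma$ as an $M$-graded algebra whose degree-$u$ piece is a fractional ideal of $\kcirc$ cut out by the vertices of $\sigma\cap(N_\R\times\{1\})$, get flatness from torsion-freeness over a valuation ring, get normality from writing $K[M]^\sigma$ as an intersection of integrally closed rings, and reduce finite type to a finiteness question about the dual cone) is indeed how these facts are established in \cite{gubler12}, so the route is sound in outline. But there is one outright error and one substantive gap. The error: $A\otimes_{\kcirc}K\neq K[M]$ in general. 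Inverting a nonzero element of $K^{\circ\circ}$ kills the inequalities coming from points $(\omega,c)\in\sigma$ with $c>0$, but \emph{not} those with $c=0$; what you get is $A\otimes_{\kcirc}K=K[\check\sigma_0\cap M]$ for the recession cone $\sigma_0=\sigma\cap(N_\R\times\{0\})$, i.e.\ the generic fibre is the $T$-toric variety of $\sigma_0$ (the paper says exactly this in the introduction). Density of $T$ in the generic fibre still holds, but because $K[\check\sigma_0\cap M]\subset K[M]$ is an inclusion of domains with the same fraction field, not because the two rings coincide. Relatedly, your fallback route to normality via Proposition \ref{intersection of local rings}(d) is circular: that statement presupposes that the variety is normal (and of finite type); the correct argument writes $K[M]^\sigma=\bigcap_{(\omega,c)\in\sigma}\{f:\ w_{(\omega,c)}(f)\ge 0\}$ for the Gauss-type valuations $w_{(\omega,c)}(\sum_u\alpha_u\chi^u)=\min_u\bigl(cv(\alpha_u)+\langle u,\omega\rangle\bigr)$ and uses that each such set is integrally closed in $K(M)$.

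The substantive gap is the equivalence ``finite type $\Leftrightarrow$ vertices in $N_\Gamma\times\{1\}$'' when $v$ is not discrete, which you correctly identify as the hard point but leave at the level of a plan. To close it you would need at least: (i) for the forward direction, that in a finitely generated $M$-graded $\kcirc$-algebra with $A_0=\kcirc$ every graded piece $A_u$ is a finitely generated $\kcirc$-module, that a nonzero fractional ideal of a valuation ring is finitely generated iff it is principal iff it equals $\{\alpha: v(\alpha)\ge\gamma\}$ with $\gamma\in\Gamma$, and that for a vertex $\omega_j\notin N_\Gamma$ one can choose $u\in M$ in the interior of the normal cone at $\omega_j$ with $\langle u,\omega_j\rangle\notin\Gamma$ (this uses non-discreteness of $\Gamma$ and is not completely automatic); (ii) for the converse, an explicit finite generating set, which is where the approximation issues over the dense group $\Gamma$ live. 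None of this is implausible, but as written the proposal establishes the easy parts and defers precisely the part of the proposition that distinguishes the non-noetherian case from the classical one.
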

\begin{proof}
{Normality is proven in \cite[Proposition 6.10]{gubler12}. If the valuation is discrete, then  \cite[Proposition 6.7]{gubler12} shows that $\Vcal_\sigma$ is a $\tdop$-toric variety. If $v$ is not discrete, then the equivalence in the last claim is proved in \cite[Propositions 6.9]{gubler12}.}  
\end{proof}

\begin{art} \rm \label{admissible fans}
 A {\it $\Gamma$-admissible fan} $\Sigma$ in $N_{\mb{R}}\times \mb{R}_+$ is a fan consisting of $\Gamma$-admissible cones.
Given a $\Gamma$-admissible fan $\Sigma$, we glue  the normal affine $\mb{T}$-toric schemes $\ms{V}_\sigma$, $\sigma \in \Sigma$, along the open subschemes coming from their common faces. The result is a normal $\tdop$-toric scheme $\Ycal_\Sigma$. Similarly {to} the classical case of toric varieties over a field, the properties of the $\mb{T}$-toric schemes $\Ycal_\Sigma$ may be described by  the combinatorics of the cones $\Sigma$. For details, we refer to \cite{gubler12}.
\end{art}

Now we review the construction of  projective  $\mb{T}$-toric schemes which are not necessarily normal (see \cite[\textsection 9 ]{gubler12} for more details). These are not all the possible projective toric schemes over $K^\circ$ but just those which have a linear action of the torus, see \cite[Proposition 9.8]{gubler12}. For the corresponding projective toric varieties over a field, we refer to  Cox--Little--Schenk \cite[\S 2.1, \S 3.A]{cox} and  Gelfand--Kapranov--Zelevinsky \cite[Chapter 5]{gkz}.  

\begin{art} \rm \label{projective toric schemes}
Given $R \in \zdop_+$, we choose projective coordinates on the projective space $\pdop^R_{\kcirc}$.
Let $A=(u_0,\ldots, u_R)\in M^{R+1}$ and ${\bf y}=(y_0:\cdots :y_R)\in \mb{P}^R(K)$. The {\it height function of ${\bf y}$} is defined as 
\[ a:\{0,...,R\} \ra \Gamma \cup \{ \infty\}, \quad j\mapsto a(j):=v(y_j).\]
The action of $\mb{T}$ on $\mb{P}^R_{K^\circ}$ is given by 
\[ (t,{\bf x})\mapsto (\chi^{u_0}(t)x_0:\cdots : \chi^{u_R}(t)x_R) .\]
We define the projective toric variety $\ms{Y}_{A,a}$ to be the closure of the orbit ${T} {\bf y}$. The generic fiber $Y_{A,a}$ is a toric variety respect to the torus $T/\tx{Stab}({\bf y})$. It follows from \cite[9.2]{gubler12} that $\ms{Y}_{A,a}$ is a $\mb{T}$-toric variety over $K^\circ$ with respect to the split torus over $K^\circ$ with generic fiber $T/\tx{Stab}({\bf y})$.

The {\it weight polytope}  $\tx{Wt}({\bf y})$ is defined as the convex hull of $A({\bf y}):=\{u_j\mid a(j)<\infty \}$. The {\it weight subdivision} $\tx{Wt}({\bf y},a)$ is the polytopal complex with support $\tx{Wt}({\bf y})$ obtained by projecting the faces of the convex hull of $\{ (u_j, \lambda_j) \in M_{\mb{R}}\times \mb{R}_+ \mid j=0,\ldots, R; \; \lambda_j \geq a(j)\}.$
We will see in the next result that the orbits of  $\ms{Y}_{A,a}$ can be read off from the weight subdivision.
\end{art}


\begin{prop}\label{projectivetoric}
There  is a bijective order preserving correspondence  between faces $Q$ of the weight subdivision $\tx{Wt}({\bf y},a)$ and 
 $\mb{T}$-orbits $Z$ of the special fiber of $\ms{Y}_{A,a}$ given by 

\[ Z = \{ {\bf x}\in (\ms{Y}_{A,a})_s\mid  x_j \neq 0 \Leftrightarrow u_j \in A({\bf y})\cap Q\}. \]
\end{prop}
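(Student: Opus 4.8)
The plan is to reduce the statement to the orbit decomposition of a toric variety given by a point in projective space and its height function, working orbit by orbit. The correspondence should be built in two steps, separating the generic-fibre combinatorics from the role of the valuation. First I would recall from \cite[\S 9]{gubler12} (in particular \ref{projective toric schemes}) that the generic fibre $Y_{A,a}$ is the classical projective toric variety associated to the point ${\bf y}$, whose torus orbits are indexed by the faces of the weight polytope $\tx{Wt}({\bf y})$; concretely, the orbit attached to a face $F$ consists of those ${\bf x}$ with $x_j \neq 0 \Leftrightarrow u_j \in A({\bf y}) \cap F$. The new phenomenon over $\kcirc$ is that passing to the special fibre refines this: a face $F$ of $\tx{Wt}({\bf y})$ may break into several cells of the weight \emph{subdivision} $\tx{Wt}({\bf y},a)$, and each such cell $Q$ should correspond to exactly one $\tdop$-orbit of $(\ms{Y}_{A,a})_s$.

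\textbf{Key steps.} Step one: set up coordinates. After a $\tdop$-equivariant change (which only permutes and rescales the $y_j$, hence does not change orbits), one may assume $v(y_j) = a(j)$ literally, and it is harmless to assume $0 \in A({\bf y})$ with $a(0)=0$ after translating by a character so that $\chi^{u_0}$ becomes trivial. Step two: describe the coordinate ring of $\ms{Y}_{A,a}$. The scheme $\ms{Y}_{A,a}$ is cut out in $\mb{P}^R_{\kcirc}$ as the closure of $T{\bf y}$, and its affine charts are the $\Spec$ of $\kcirc$-subalgebras of $K[M']$ (where $M'$ is the character lattice of $T/\Stab({\bf y})$) generated by monomials $\nu^{a(j)-a(k)}\chi^{u_j - u_k}$ on the chart $\{x_k \neq 0\}$, with $\nu \in \kcirc$ a uniformizing-type element realizing the relevant values; this is exactly the description in \cite[9.2--9.7]{gubler12}. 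Step three: compute the special fibre of such a chart by reducing mod $K^{\circ\circ}$. A monomial $\nu^{a(j)-a(k)}\chi^{u_j-u_k}$ survives in the special fibre iff $a(j) - a(k) \leq 0$ relative to the cell, i.e.\ iff the pair $(u_j, a(j))$ lies on the lower face of the convex hull $\{(u_l,\lambda_l) : \lambda_l \geq a(l)\}$ that projects to the cell $Q$ containing $u_k$. This is precisely the defining property of the weight subdivision $\tx{Wt}({\bf y},a)$. Step four: read off the $\tdop$-orbits. On the special fibre the torus $\tdop$ acts through its special fibre $\Tor_{\ktilde}$, and the preceding monomial analysis shows that the vanishing locus of the coordinates $x_j$ with $u_j \notin Q$ is the closure of a single $\Tor_{\ktilde}$-orbit, whose open part is $\{{\bf x} : x_j \neq 0 \Leftrightarrow u_j \in A({\bf y}) \cap Q\}$. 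Step five: check that the map $Q \mapsto Z$ is a bijection and is order-preserving; injectivity is clear since $Q$ is recovered from $Z$ as $\tx{conv}\{u_j : x_j \neq 0 \text{ on } Z\}$, surjectivity follows because every $\tdop$-orbit of the special fibre meets some torus chart and hence arises from some cell, and the face relation $Q \preceq Q'$ translates into $Z \subseteq \overline{Z'}$ by comparing which coordinates vanish.

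\textbf{Main obstacle.} The technical heart is Step three: verifying that reduction modulo $K^{\circ\circ}$ of the monomial $\kcirc$-algebra attached to a torus chart produces exactly the semigroup algebra governed by a single cell $Q$ of the weight subdivision, rather than something larger or reducible. One must check both that the relevant cells fit together into an actual polytopal complex (so the charts glue) and that the special fibre is reduced on each chart, so that ``orbit'' is unambiguous; this is where one invokes the detailed structure results of \cite[\S 9]{gubler12}, especially the description of $\ms{Y}_{A,a}$ via the regular subdivision and the identification of its affine pieces. Once that dictionary is in place, identifying $Z$ with the quasi-affine locus $\{x_j \neq 0 \Leftrightarrow u_j \in A({\bf y}) \cap Q\}$ and checking order-preservation are formal. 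A secondary point to handle carefully is that $\ms{Y}_{A,a}$ need not be normal, so one cannot appeal to the clean orbit-cone correspondence for $\Ycal_\Sigma$; the argument must stay at the level of explicit monomial algebras throughout.
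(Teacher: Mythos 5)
The paper does not prove this proposition in-house; its ``proof'' is a one-line citation to \cite[Proposition 9.12]{gubler12}, and your sketch reconstructs essentially the argument of that reference: present the charts $\ms{Y}_{A,a}\cap\{x_k\neq 0\}$ (for $u_k\in A({\bf y})$) as monomial $\kcirc$-subalgebras generated by the $(y_j/y_k)\chi^{u_j-u_k}$, reduce modulo $K^{\circ\circ}$, and match the surviving monomials with the lower faces of the lifted configuration $\{(u_j,\lambda_j):\lambda_j\geq a(j)\}$ that define the weight subdivision. You correctly locate the technical heart in Step three (note only that the criterion is the lower-hull condition you state second, not the literal sign of $a(j)-a(k)$, and that for a non-discrete valuation there is no uniformizer $\nu$, so one works with $y_j/y_k$ directly), and the outline is sound.
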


\proof {This is the content of} \cite[Proposition 9.12]{gubler12}. \qed

\section{The cone of a normal affine toric variety}

We recall that $(K,v)$ is a valued field with valuation ring $K^\circ$, residue field $\widetilde{K}$ and value group $\Gamma\subset \mb{R}$. Let $\mb{T}=\te{Spec}(K^\circ[M])$ be the split torus over $\kcirc$ with generic fiber $T$. The free abelian group $M$ of rank $n$ is isomorphic to the character group of $T$. For an element $u\in M$, the corresponding character is denoted by $\chi^u$. Let $N=\te{Hom}(M,\mb{Z})$ be the dual abelian group of $M$. 

As we have seen in the previous section, a $\Gamma$-admissible cone $\sigma$ in $N_\rdop \times \rdop_+$ 
induces a normal affine $\mb{T}$-toric scheme $\ms{V}_\sigma =\tx{Spec}(K[M]^\sigma)$. This is  a $\mb{T}$-toric variety if and only if the vertices of $\sigma \cap (N_{\rdop}\times \{1\})$ are contained in $N_\Gamma \times \{1\}$ or if $v$ is discrete. 

In this section, we will show that every normal affine $\mb{T}$-toric variety $\ms{Y}=\te{Spec}(A)$ has this form proving Theorem \ref{cone}.
We may assume that the valuation is non-trivial as in the classical case of normal toric varieties over a field, the statement is well known (see \cite[ch. I, Theorem 1]{mumford73}). 
The $\tdop$-action induces an $M$-grading $A=\bigoplus_{m\in M} A_m$ on the $\kcirc$-algebra $A$. Since $T$ is an open dense orbit of $\ms{Y}$, we may and will assume that $A$ is a subalgebra of the quotient field $K(M)$ of $K[M]$. 

{The proof of Theorem \ref{cone} will be done in three steps along the classical proof dealing with the additional complications of a non-trivial value group and of a non-noetherian ring $A$. In Lemma \ref{saturated}, we will associate to the $M$-graded subalgebra $A$ a semigroup $S$ in $M\times \Gamma$. Then in a second step, we will show in Lemmata \ref{lemma2} and  \ref{lemma3} that $S$ gives rise to a $\Gamma$-admissible cone $\sigma$ in $N \times \rdop_+$. Finally, we will prove in Proposition \ref{cone proposition} that $\ms{Y}=\ms{V}_\sigma$ by using an approximation argument.}

\begin{lemma} \label{saturated}
The set  $S:=\{(m,v(a))\in M\times \Gamma\mid  a\chi^m\in A\backslash \{0\} \}$ is a saturated semigroup in $M\times \Gamma$.
\end{lemma}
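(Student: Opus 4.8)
The plan is to verify directly that $S$ is closed under the semigroup operation and under the saturation property, using only that $A$ is an $M$-graded $\kcirc$-subalgebra of $K(M)$ and that $\kcirc$ is a valuation ring.

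\textbf{Semigroup property.} First I would check that $S$ is closed under addition. Suppose $(m,\gamma)\in S$ and $(m',\gamma')\in S$, witnessed by $a\chi^m\in A\setminus\{0\}$ with $v(a)=\gamma$ and $a'\chi^{m'}\in A\setminus\{0\}$ with $v(a')=\gamma'$. Since $A$ is a $\kcirc$-algebra, the product $(a\chi^m)(a'\chi^{m'})=(aa')\chi^{m+m'}$ lies in $A$, and it is nonzero because $K(M)$ is a domain; moreover $v(aa')=\gamma+\gamma'$ since $v$ is a valuation. Hence $(m+m',\gamma+\gamma')\in S$. One should also note $S\neq\emptyset$: for instance $1\in A$ gives $(0,0)\in S$. (If one wants $S$ to contain an identity, $(0,0)\in S$ serves.)

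\textbf{Saturation.} Next I would show that if $(m,\gamma),(m',\gamma')\in M\times\Gamma$ satisfy $(m,\gamma)+(m',\gamma')\in S$ and $(m',\gamma')\in S$, then $(m,\gamma)\in S$ — i.e., $S$ is a face-closed (saturated) subsemigroup in the sense that $x+y\in S$ and $y\in S$ imply $x\in S$. Pick $b\chi^{m+m'}\in A\setminus\{0\}$ with $v(b)=\gamma+\gamma'$ and $a'\chi^{m'}\in A\setminus\{0\}$ with $v(a')=\gamma'$. The ratio $c:=b/a'\in K$ satisfies $v(c)=\gamma$, and $c\chi^m=(b\chi^{m+m'})/(a'\chi^{m'})$ is the quotient of two elements of $A\subset K(M)$, hence lies in $K(M)$; I must check it lies in $A$. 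Here is where the $M$-grading and the valuation-ring hypothesis enter: consider the element $f:=b\chi^{m+m'}\cdot a'^{-1}\chi^{-m'}$. Since $v(a')=\gamma'=v(a')$, write $c\chi^m$; then $(c\chi^m)(a'\chi^{m'})=b\chi^{m+m'}\in A$. So $c\chi^m$ is an element $\xi\in K(M)$ with $\xi\cdot(a'\chi^{m'})\in A$, and $\xi$ is homogeneous of degree $m$. Multiplying $\xi$ by suitable homogeneous elements of $A$ and using that $A$ is integrally closed — or more directly, that the homogeneous component $A_m$ of $A$ is a $\kcirc$-submodule of $K$ (after the identification $A_m = I_m\chi^m$ with $I_m\subset K$ a $\kcirc$-submodule, because the grading is by $M$ and $A$ is a $\kcirc$-algebra), hence a fractional ideal-type object closed under multiplication by $\kcirc$ — I conclude that $c\chi^m\in A$, because $v(c)\ge$ the relevant valuation of any generator, exploiting that over a valuation ring a $\kcirc$-submodule of $K$ is determined by the infimum of valuations of its elements. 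Thus $(m,\gamma)\in S$.

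\textbf{Main obstacle.} The delicate point is the saturation step: one needs that membership $c\chi^m\in A$ depends only on $v(c)$ and $m$, not on the specific $c$, and that whenever $(c\chi^m)(a'\chi^{m'})\in A$ with $a'\chi^{m'}\in A$ then already $c\chi^m\in A$. This is where normality of $\Ycal$ (integral closedness of $A$) is essential — $c\chi^m$ satisfies a monic equation over $A$ obtained from the fact that a suitable power lies in $A$, using that $T$ is dense so powers of $\chi^{m'}$ are non-zero-divisors — together with the fact that $\kcirc$ is a valuation ring, so that the set $\{v(a) : a\chi^m\in A\}$ is an up-set in $\Gamma$ (closed under adding elements of $v(\kcirc)=\Gamma_{\ge 0}$). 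I would spell this out by first reducing to the homogeneous piece $A_m$, observing $A_m$ is stable under $\kcirc$-multiplication, and then arguing that $A_m\cdot A_{m'}\subseteq A_{m+m'}$ combined with integral closedness forces the reverse inclusion $A_{m+m'}\cdot (A_{m'})^{-1}\subseteq A_m$ at the level of valuations. Everything else is a routine check that $v$ is multiplicative and that $A$ is closed under products.
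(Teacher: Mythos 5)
There is a genuine problem here: you are proving the wrong property. In this paper (as is standard in toric geometry), a subsemigroup $S$ of the group $M\times\Gamma$ is \emph{saturated} if $k\cdot(m,\gamma)\in S$ for some integer $k\geq 1$ implies $(m,\gamma)\in S$ — divisibility saturation. Your ``saturation'' step instead tries to prove the face-closedness property ``$x+y\in S$ and $y\in S$ imply $x\in S$'', which is a different statement and is in fact \emph{false} for these semigroups. For a concrete counterexample take $\Ycal=\mathbb{A}^1_{\kcirc}=\Spec(\kcirc[\chi^{u}])$, so that $S=\mathbb{Z}_+u\times(\Gamma\cap\mathbb{R}_+)$; with $y=(0,\gamma_0)$ for some $\gamma_0>0$ in $\Gamma$ and $x=(u,-\gamma_0)$ one has $y\in S$ and $x+y=(u,0)\in S$, but $x\notin S$. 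Consequently the argument you sketch cannot be completed: the ratio $c\chi^{m}=(b\chi^{m+m'})/(a'\chi^{m'})$ is a unit-degree element of $K(M)$ lying in the fraction field of $A$, but nothing forces it into $A$ (in the example above it has $v(c)<0$ while $A_{u}=\kcirc\chi^{u}$). Integral closedness does not help, because $c\chi^m$ need not satisfy any monic equation over $A$.

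The intended argument is much shorter. The semigroup property is exactly as you say (and your observation that $A_m$ is a $\kcirc$-submodule of $K\chi^m$, hence determined by valuations, is correct but not needed). For saturation in the correct sense: suppose $k(m,v(a))\in S$ for some $k\in\mathbb{Z}_+\setminus\{0\}$, i.e.\ $(km,v(a^k))\in S$. Since any two elements of $K$ with the same valuation differ by a unit of $\kcirc$ and $A$ is a $\kcirc$-algebra, this gives $(a\chi^m)^k=a^k\chi^{km}\in A$. Then $a\chi^m$ lies in $K(M)=\mathrm{Frac}(A)$ and satisfies the monic equation $T^k-(a\chi^m)^k=0$ with coefficients in $A$, so normality of $A$ (integral closedness in its fraction field) yields $a\chi^m\in A$, i.e.\ $(m,v(a))\in S$. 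This is the only place normality is used; no analysis of the graded pieces $A_m$ is required.
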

\begin{proof}
Obviously, the set $S$ is a semigroup. Let $k(m,v(a))\in S$ for $m \in M$, $a \in K \setminus \{0\}$ and $k\in \mb{Z}_+ \setminus \{0\}$, i.e. $(a\chi^m)^k\in A$. 
By normality of $A$, we get $a\chi^m\in A$ and hence $(m,v(a))\in S$. 
\end{proof}


\begin{lemma}\label{lemma2}
There are $M$-homogeneous generators $a_1\chi^{m_1},\ldots ,a_k\chi^{m_k}$ of $A$. Moreover, the semigroup $S$ from Lemma \ref{saturated} and the set $\{ (0,1),(m_i,v(a_i))\mid i=1,\ldots, k\}$ generate the same cone in $M_\rdop \times \rdop$.
\end{lemma}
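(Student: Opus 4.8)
\textbf{Proof plan for Lemma \ref{lemma2}.}

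The plan is to build the two claims on top of standard properties of the $M$-grading on $A$ together with the coherence/finite generation coming from the fact that $\Ycal = \Spec(A)$ is of finite type over $\kcirc$. First I would observe that since $A$ is a finitely generated $\kcirc$-algebra, it is finitely generated as an $M$-graded algebra: writing arbitrary algebra generators in terms of their homogeneous components (each generator lies in $A \subset K(M) = \bigoplus_m K \chi^m$ and has only finitely many homogeneous components, each of which again lies in $A$ because $A$ is $M$-graded), we obtain finitely many $M$-homogeneous generators. Each homogeneous generator of degree $m_i$ is a nonzero element of $A_{m_i} \subset K\chi^{m_i}$, hence of the form $a_i \chi^{m_i}$ with $a_i \in K^\times$. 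This gives the generators $a_1\chi^{m_1}, \dots, a_k\chi^{m_k}$ claimed in the first sentence.

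For the second claim, I would compare the two cones by a double inclusion. For the easy inclusion, note $(m_i, v(a_i)) \in S$ for each $i$ by definition of $S$, and $(0,1) \in S$ as well, since $\chi^0 = 1 \in A$ and more relevantly any element $\pi \in \kcirc$ with $v(\pi) = 1$ (or, if $1 \notin \Gamma$, one rescales appropriately — but in fact $(0,c) \in S$ for every $c \in \Gamma_{\ge 0} = v(\kcirc \setminus\{0\})$, and these generate the ray $\{0\}\times\rdop_+$ in $M_\rdop\times\rdop$, which is what matters for the cone) lies in $A$. Hence the cone generated by $\{(0,1)\} \cup \{(m_i, v(a_i))\}$ is contained in the cone generated by $S$. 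For the reverse inclusion, I would take a general element $(m, v(a)) \in S$, i.e. $a\chi^m \in A$, and expand $a\chi^m$ as a $\kcirc$-polynomial in the generators: $a\chi^m = \sum_{\nu} \lambda_\nu \prod_i (a_i\chi^{m_i})^{\nu_i}$ with $\lambda_\nu \in \kcirc$. Matching $M$-degrees, only multi-indices $\nu$ with $\sum_i \nu_i m_i = m$ contribute, and comparing coefficients of $\chi^m$ gives $a = \sum_\nu \lambda_\nu \prod_i a_i^{\nu_i}$ over such $\nu$. By the ultrametric inequality, $v(a) \ge \min_\nu\bigl( v(\lambda_\nu) + \sum_i \nu_i v(a_i)\bigr) \ge \min_\nu \sum_i \nu_i v(a_i)$, since $v(\lambda_\nu) \ge 0$. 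Picking a $\nu$ attaining this minimum, we get $(m, v(a)) = \bigl(\sum_i \nu_i m_i,\ v(a)\bigr)$, which lies in the cone generated by $\{(m_i, v(a_i))\}$ together with $(0,1)$: indeed $(m, \sum_i \nu_i v(a_i)) = \sum_i \nu_i (m_i, v(a_i))$ is in that cone, and $v(a) - \sum_i \nu_i v(a_i) \ge 0$, so $(m,v(a))$ is obtained by adding a nonnegative multiple of $(0,1)$. Thus $S$ lies in the cone generated by $\{(0,1)\} \cup \{(m_i,v(a_i))\}$, and the two cones coincide.

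The main obstacle I anticipate is not any single inequality but making precise the passage from "$A$ is a finitely generated $\kcirc$-algebra" to "finitely many $M$-homogeneous generators with $a_i \in K^\times$", taking care that the grading is by the group $M$ (so there is no issue with infinitely many components) and that each graded piece $A_m$ is genuinely a rank-$\le 1$ $\kcirc$-submodule of $K\chi^m$ — this is where one uses $A \subset K(M)$. Once the homogeneous generators are in hand, the cone comparison is a routine valuation computation as sketched above, and the only subtlety is the bookkeeping with the extra generator $(0,1)$ (equivalently, the ray $\{0\}\times\rdop_+$), which is needed precisely because the coefficients $\lambda_\nu$ contribute nonnegative — but possibly positive — valuation.
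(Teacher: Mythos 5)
Your proof is correct and follows essentially the same route as the paper: extract $M$-homogeneous generators from finite generation over $\kcirc$ using the grading, then prove the cone equality by double inclusion. Your treatment of the reverse inclusion is in fact slightly more careful than the paper's, which asserts that $b\chi^{u}\in A$ can be written as a \emph{single} monomial $\lambda\prod_i(a_i\chi^{m_i})^{l_i}$ with $\lambda\in\kcirc$ --- a representation that is only available after exactly the ultrametric-inequality argument you give (picking the multi-index minimizing $v(\lambda_\nu)+\sum_i\nu_i v(a_i)$), so your version fills in that tacit step; just note explicitly that the non-triviality of $v$ (assumed throughout the section) is what puts the ray $\{0\}\times\rdop_+$ inside $\mathrm{cone}(S)$.
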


\begin{proof}
Since $\Ycal$ is a variety over $\kcirc$, it is clear that $A$ is a finitely generated $\kcirc$-algebra. Using that $A$ is an $M$-graded algebra, we find generators $a_1\chi^{m_1},\ldots ,a_k\chi^{m_k}$ of $A$. Obviously, every $(m_i,v(a_i))$ is contained in the cone generated by $S$ which we denote by $\te{cone}(S)$. Since the valuation $v$ is non-trivial, it is clear that $(0,1) \in \te{cone}(S)$.

It remains to show that the cone generated by $S$ is contained in the cone generated by $\{ (0,1),(m_i,v(a_i))\mid i=1,\ldots, k\}$. An element of $\te{cone}(S)$ is a finite sum $\sum_j\alpha_j (u_j,v(b_j))$ with $\alpha_j \in \rdop_+$ and $(u_j,v(b_j)) \in S$ {with $b_j\chi^{u_j}\in A$}. Using the above generators, we get
\[  b_j\chi^{u_j}=\lambda^{(j)}(a_1\chi^{m_1})^{l_1^{(j)}}\cdots (a_k\chi^{m_k})^{l_k^{(j)}}\quad \te{for } \lambda^{(j)}\in K^\circ \setminus \{0\}, l_1^{(j)},\ldots ,l_k^{(j)}\in \mb{Z}_+. \]
This implies 
\begin{eqnarray*}
v(b_j)&=&v(\lambda^{(j)})+\sum_{i=1}^kl_i^{(j)}v(a_i)\\
u_j&=&\sum_{i=1}^{k} l_i^{(j)}m_i.
\end{eqnarray*}
We conclude that the element $\sum_j\alpha_j (u_j,v(b_j))$ of $\te{cone}(S)$ is equal to
\begin{eqnarray*}
\sum_j \alpha_j \left( \sum_{i=1}^k l_i^{(j)}m_i,v(\lambda^{(j)})+\sum_{i=1}^k l_i^{(j)}v(a_i)\right)&=&\sum_j \alpha_j (0,v(\lambda^{(j)}))+\sum_j\sum_i \alpha_j l_i^{(j)}(m_i,v(a_i))\\
&=& (0,\lambda)+\sum_i \lambda_i(m_i,v(a_i))
\end{eqnarray*}
with $\lambda:=\sum_j \alpha_j v(\lambda^{(j)}) \in \rdop_+$ and $\lambda_i:=\sum_j \alpha_j l_i^{(j)}\in \rdop_+$. This proves the lemma.
\end{proof}
\begin{lemma}\label{lemma3}
The set $\sigma:=\{(\omega,s) \in N_\rdop \times \rdop \mid \langle u, \omega \rangle + ts \geq 0 \; \forall (u,t) \in S\}$  is a $\Gamma$-admissible cone in $N_\rdop \times \rdop_+$.
\end{lemma}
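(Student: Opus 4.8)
The plan is to verify directly the three defining properties of a $\Gamma$-admissible cone: that $\sigma$ is a cone, that it is cut out by finitely many inequalities of the prescribed form with coefficients $u_i \in M$ and $c_i \in \Gamma$, and that it contains no line. The fact that $\sigma \subset N_\rdop \times \rdop_+$ will follow once we observe that $(0,1) \in S$ (the valuation is non-trivial, so there is $\nu \in \kcirc \setminus \{0\}$ with $v(\nu) > 0$, and rescaling gives $(0,t) \in S$ for some $t > 0$), which forces $s \geq 0$ for every $(\omega,s) \in \sigma$.

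The key reduction is Lemma \ref{lemma2}: although $S$ is typically not finitely generated as a semigroup, it generates the same cone as the finite set $F := \{(0,1)\} \cup \{(m_i, v(a_i)) : i = 1, \dots, k\}$. Since the inequality $\langle u, \omega\rangle + ts \geq 0$ is linear in $(u,t)$ and the closed half-space it defines is convex and stable under nonnegative scaling, requiring it for all $(u,t) \in S$ is equivalent to requiring it for all $(u,t) \in \mathrm{cone}(S) = \mathrm{cone}(F)$, which in turn is equivalent to requiring it just for the finitely many generators in $F$. Hence
\[
\sigma = \{(\omega,s) \in N_\rdop \times \rdop : s \geq 0,\ \langle m_i, \omega\rangle + v(a_i)\, s \geq 0 \text{ for } i = 1,\dots,k\},
\]
which is exactly the required presentation with $u_i = m_i$, $c_i = v(a_i) \in \Gamma$ (plus the extra inequality $s \geq 0$, coming from $u = 0$, $t = 1$). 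This already shows $\sigma$ is a finitely generated rational (relative to $\Gamma$) cone contained in $N_\rdop \times \rdop_+$; in particular it is a cone.

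It remains to check that $\sigma$ contains no line, and this is the step I expect to require a small argument rather than a one-liner. Suppose $\sigma$ contains a line $\rdop \cdot (\omega_0, s_0)$; then both $(\omega_0,s_0)$ and $-(\omega_0,s_0)$ lie in $\sigma \subset N_\rdop \times \rdop_+$, forcing $s_0 = 0$, so the line lies in $N_\rdop \times \{0\}$. Then for every $(u,t) \in S$ we get $\langle u, \omega_0\rangle \geq 0$ and $\langle u, -\omega_0\rangle \geq 0$, i.e. $\langle u, \omega_0\rangle = 0$ for all $(u,v(a)) \in S$; equivalently $\langle m, \omega_0\rangle = 0$ for every $m$ with $A_m \neq 0$. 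But the set of such $m$ spans $M_\rdop$: indeed $A$ generates $K(M)$ as its field of fractions (since $T \subset \Ycal$ is dense and $\Ycal$ is integral of relative dimension $n$), so the group generated by $\{m \in M : A_m \neq 0\}$ has finite index in $M$, hence spans $M_\rdop$. Therefore $\omega_0 = 0$, and $\sigma$ contains no line. This completes the proof that $\sigma$ is $\Gamma$-admissible.
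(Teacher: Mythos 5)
Your proof is correct and follows essentially the same route as the paper: reduce to the finite set of generators from Lemma \ref{lemma2} to obtain the $\Gamma$-rational presentation, then rule out a line by forcing it into $N_\rdop\times\{0\}$ and using that ${\rm Frac}(A)=K(M)$. The only cosmetic difference is in that last step, where you argue that the weights $m$ with $A_m\neq 0$ span $M_\rdop$ (so $\omega_0=0$), whereas the paper chooses a basis of $M$ with $u_1,\dots,u_{n-1}\in\omega^\perp$ and contradicts ${\rm Frac}(A)=K(M)$ directly -- the same idea in a slightly tidier form.
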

\begin{proof}
By definition, $\sigma$ is the dual cone of the cone generated by $S$. 
From Lemma \ref{lemma2}, we have 
\[ \sigma=\bigcap_{i=1}^k \left\{ (\omega, s)\in N_{\mb{R}}\times \mb{R}_+\mid \langle m_i,\omega \rangle+sv(a_i)\geq 0\right\}. \]
It remains to show that $\sigma$ doesn't contain a line. Suppose $\sigma$ contains a line. Then we have $\mb{R}\cdot (\omega,t) \subset \sigma$ for some $(\omega,t) \in N_{\mb{R}}\times \mb{R}_+$. Since $\sigma\subset N_{\mb{R}}\times \mb{R}_+$, we must have  $t=0$. Therefore the line is of the form $\mb{R}\cdot (\omega, 0)\subset N_{\mb{R}}\times \{0\}$.
For any $a\chi^u\in A \setminus \{0\}$, we have $(u, v(a))\in S$ and hence
\[ 0 \leq \langle (u,v(a)), (\lambda \omega,0) \rangle=\lambda \langle u,  \omega \rangle \quad \forall \lambda \in \mb{R}.\] 
This proves $u \in \omega^{\perp}$. Choosing a basis  $\{u_1,\ldots, u_n\}$ for $M$ such that $u_1,\ldots, u_{n-1}\in \omega^{\perp}$, we get $A\subset K[\chi^{\pm u_1},\ldots, \chi^{\pm u_{n-1}}]$.  On the other hand, $\Ycal$ is a $\tdop$-toric variety and hence the quotient field of $A$ is $K(\chi^{\pm u_1},\ldots, \chi^{\pm u_{n}})$. This is a contradiction and hence  $\sigma$ doesn't contain any line. We conclude that $\sigma$ is $\Gamma$-admissible.
\end{proof}


\begin{prop} \label{cone proposition}
Let $\Ycal =\Spec(A)$ be an affine normal $\tdop$-toric variety over $\kcirc$. Then $\Ycal=\Vcal_\sigma$ for the $\Gamma$-admissible cone $\sigma$ defined in Lemma \ref{lemma3}.
\end{prop}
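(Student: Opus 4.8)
The plan is to show that $A = K[M]^\sigma$ as subalgebras of $K(M)$, for the cone $\sigma$ from Lemma \ref{lemma3}. First I would recall from \ref{admissible cones} that $K[M]^\sigma = \{\sum_u \alpha_u \chi^u \mid cv(\alpha_u) + \langle u,\omega\rangle \geq 0 \ \forall (\omega,c) \in \sigma\}$, and since $\sigma \subset N_\rdop \times \rdop_+$ is spanned by points with $c=1$ together with the point $(0,1)$'s dual constraints, the membership condition for a homogeneous element $a\chi^u$ is simply $v(a) + \langle u,\omega\rangle \geq 0$ for all $(\omega,1) \in \sigma$, equivalently $(u,v(a))$ lies in the double dual $\sigma^\vee$ of $\sigma$. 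Because $\sigma$ is by definition the dual of $\te{cone}(S)$ and $\te{cone}(S)$ is a closed convex (polyhedral, by Lemma \ref{lemma2}) cone, biduality gives $\sigma^\vee = \te{cone}(S)$. Hence $a\chi^u \in K[M]^\sigma$ if and only if $(u,v(a)) \in \te{cone}(S)$.

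The inclusion $A \subset K[M]^\sigma$ is then immediate: any homogeneous $a\chi^u \in A\setminus\{0\}$ has $(u,v(a)) \in S \subset \te{cone}(S) = \sigma^\vee$, so $a\chi^u \in K[M]^\sigma$; since both algebras are $M$-graded, this suffices. For the reverse inclusion $K[M]^\sigma \subset A$, I would take a homogeneous $a\chi^u \in K[M]^\sigma$, so $(u,v(a)) \in \te{cone}(S)$. The key point is to upgrade this to $(u,v(a)) \in S$ itself. Since $S$ is a finitely generated semigroup (it is the semigroup of lattice-with-value-group points, and one can extract finitely many generators from the algebra generators $a_i\chi^{m_i}$ of Lemma \ref{lemma2} together with $(0, \gamma)$ for a generator $\gamma$ of... — actually one must be careful, as $\Gamma$ need not be finitely generated), the cleaner route is: $(u,v(a)) \in \te{cone}(S)$ means $(u,v(a)) = (0,\lambda) + \sum_i \lambda_i (m_i, v(a_i))$ with $\lambda, \lambda_i \in \rdop_+$, by the explicit description of $\te{cone}(S)$ in Lemma \ref{lemma2}. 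One then uses saturation (Lemma \ref{saturated}) to clear denominators, together with the fact that the $M$-component $u = \sum_i \lambda_i m_i$ forces the $\lambda_i$ to be chosen rationally once we observe $u, m_i \in M$ lie in a lattice, and $\lambda = v(a) - \sum_i \lambda_i v(a_i)$ must then lie in $\Gamma \cup \rdop_{>0}$ in a compatible way. Concretely: multiplying by a suitable positive integer $N$, $N(u,v(a)) = N\lambda\cdot(0,1)/1 + \sum_i (N\lambda_i)(m_i,v(a_i))$, and by arranging $N\lambda_i \in \Z_+$ (possible up to adjusting, since we may slide weight between the $(0,1)$ direction and redistribute) we express $N(u,v(a))$ as a sum of elements of $S$ (here $(0,1) \in S$ since $v$ is non-trivial, so $v(c) = $ some positive value, and $(0,\gamma) \in S$ for $\gamma = v(c) > 0$... this needs $\lambda$ to be a non-negative integer combination of values in $\Gamma_{>0}$, which holds after scaling since $\Gamma$ is a group). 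Thus $N(u,v(a)) \in S$, and by saturation $(u,v(a)) \in S$, i.e. $a\chi^u \in A$.

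The main obstacle I anticipate is precisely this last step: passing from $(u,v(a)) \in \te{cone}(S)$ to $(u,v(a)) \in S$ in the non-noetherian, non-finitely-generated-$\Gamma$ setting. Over a field the analogous statement is "a point of the rational cone in the character lattice lies in the saturated finitely generated semigroup", which follows from Gordan's lemma plus saturation; here the value-group coordinate is a real number and $\Gamma$ may be dense in $\rdop$, so one cannot naively invoke Gordan. The resolution is that the $M$-coordinate still lives in a genuine lattice, so the coefficients $\lambda_i$ in $u = \sum \lambda_i m_i$ can be taken rational (solve a rational linear system), after which the $\Gamma$-coordinate $v(a) - \sum \lambda_i v(a_i)$ automatically lies in $\Gamma_\qdop \cap \rdop_{\geq 0} = \Gamma_{\geq 0}$ (as $\Gamma$ is divisible... no — as $\Gamma$ is a subgroup of $\rdop$, $\Gamma_\qdop \cap \rdop$ need not be $\Gamma$). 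So one genuinely needs that $v(a)$ and $v(a_i)$ lie in $\Gamma$ and the combination is forced; scaling by $N$ to clear the rational $\lambda_i$ and then writing the $(0,1)$-part as $(0, Nv(a) - \sum (N\lambda_i) v(a_i))$, an element of $\Gamma_{\geq 0} = $ (value group), which equals $v(\lambda)$ for some $\lambda \in \kcirc$, hence $(0, Nv(a)-\sum(N\lambda_i)v(a_i)) \in S$. This gives $N(u,v(a)) \in S$ and saturation finishes it. I would present the argument in this order: reduce to the graded/homogeneous case, identify membership in $K[M]^\sigma$ with membership in $\te{cone}(S) = \sigma^\vee$ via biduality and Lemma \ref{lemma2}, prove $A \subseteq K[M]^\sigma$ trivially, then prove $K[M]^\sigma \subseteq A$ by the scaling-plus-saturation argument above.
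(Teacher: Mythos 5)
Your proposal is correct and follows essentially the same route as the paper: biduality identifies membership of a homogeneous $a\chi^u$ in $K[M]^\sigma$ with membership of $(u,v(a))$ in $\te{cone}(S)$, and the reverse inclusion is obtained by rationally approximating the coefficients $\lambda_i$ in the decomposition from Lemma \ref{lemma2}, clearing denominators, and invoking saturation/normality. The one point you gloss over --- that after perturbing to rational $\hat\lambda_i$ the residual coefficient $v(a)-\sum_i\hat\lambda_i v(a_i)$ of $(0,1)$ can still be arranged to be non-negative (and in $\Gamma$ after scaling) --- is precisely the explicit sign-controlled perturbation argument the paper carries out.
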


\proof We have to show  $K[M]^\sigma=A$.  Take any $a\chi^m\in A \setminus \{0\}$. Since $(m, v(a))\in S$, we get  
$\langle m, \omega \rangle+t\cdot v(a)\geq 0$ for all $(\omega, t) \in \sigma$ and hence  
 $a\chi^m\in K[M]^\sigma$. This proves  $A\subset K[M]^\sigma$.

To prove the reverse inclusion, we take $a\chi^m\in K[M]^\sigma \setminus \{0\}$. By definition,  $(m,v(a))$ is in the dual cone $ \check{\sigma}$ of $\sigma$. Using biduality of convex polyhedral cones (see \cite[\S 1.2]{fulton93}), we conclude that $(m,v(a))$ is contained in the cone in $M_\rdop \times \rdop$ generated by $S$. By Lemma \ref{lemma2}, we get
\[ (m,v(a))= \kappa(0,1)+\sum_{i=1}^k\lambda_i (m_i,v(a_i)),\quad \kappa, \lambda_i \in \rdop_+.\]
From this, we deduce the following equivalent system of equations
\begin{eqnarray}
m&=&\sum_i \lambda_i m_i \label{exp}\\
v(a)&=&\kappa +\sum_i \lambda_i v(a_i). \label{coe} 
\end{eqnarray}
Now we show that it is always possible to choose all $\lambda_i \in \mb{Q}_+$. We may assume that  $\lambda_i > 0$ for all $i$, otherwise we omit these coefficients.
Let $b_1, \dots, b_s$ be a basis in $\mb{Q}^k$ for the solutions of the homogeneous equation associated to (\ref{exp}) and let $\mu \in \mb{Q}^k$ be a particular solution for (\ref{exp}). We will use the coordinates $(b_j^{(1)},\ldots, b_j^{(k)})$ for the vector $b_j$ of the basis.  The space of solutions $\mb{L}$ is given by
	\[ \mb{L}=\{  \mu +\sum_{j=1}^s \rho_j b_j \mid \rho_j\in \mb{R}, j=1,\ldots ,s\}. \]
	Since $\lambda:=(\lambda_i)\in \mb{R}_+^k$ is a solution of (\ref{exp}), there exist $\rho_j \in \mb{R}\; (j=1, \ldots, s)$ such that
	\[ \lambda=\mu +\sum_j \rho_j b_j\;.\]
	Now choose $\hat{\rho}_j\in \mb{Q}$ close to $\rho_j$, i.e.
	\[ \rho_j=\hat{\rho}_j+\epsilon_j\; \]
	with $|\epsilon_j|$ small. 
	Then 
	\[ \hat{\lambda}=\mu+\sum_j \hat{\rho}_j b_j\]
is also a solution of (\ref{exp}) in $\qdop^k$ which is close to $\lambda$. In particular, we may choose $|\ve_j|$ so small that all $\hat{\lambda}_i > 0$. Explicitly we have
	\[ \left( \begin{array}{c} \lambda_1 \\ \vdots \\ \lambda_k \end{array}\right)=\left( \begin{array}{c} \hat{\lambda}_1 \\ \vdots \\ \hat{\lambda}_k \end{array}\right)+\sum_{j=1}^s \epsilon_j b_j. \]
	Inserting this in (\ref{coe}), we get
	\begin{eqnarray*}
	 v(a)&=&\kappa +\sum_i \left( \hat{\lambda}_i+\sum_j \epsilon_jb_j^{(i)}\right)v(a_i)\\
	 &=&\kappa+\sum_i \hat{\lambda}_iv(a_i)+\sum_i\left( \sum_j \epsilon_jb_j^{(i)}\right)v(a_i).
	 \end{eqnarray*}
	 With $\alpha:=\sum_i\left( \sum_j \epsilon_jb_j^{(i)}\right)v(a_i)=\sum_j\epsilon_j\sum_i b_j^{(i)}v(a_i) $, we get
	 \[ v(a)= \kappa +\alpha +\sum_i \hat{\lambda}_iv(a_i).\]
	 It is easy to see that we may choose $\epsilon_1, \dots , \epsilon_s$ in a small neighbourhood of $0$ such that  $\kappa +\alpha\geq 0$. We conclude that it is possible to choose the coefficients in \eqref{exp} rational and we have  
\begin{equation} \label{rational repr}
(m,v(a))= (\kappa + \alpha)(0,1)+\sum_{i=1}^k\hat{\lambda}_i (m_i,v(a_i)),\quad \kappa + \alpha \in \rdop_+, \hat{\lambda}_i \in \qdop_+.
\end{equation}

The above shows that $(m,v(a))=(0,\kappa)+\sum_i \lambda_i (m_i,v(a_i))$ with $\lambda_i\in \mb{Q}_+$ and $\kappa \in \mb{R}_+$. Let $R$ be a positive integer such that  $R\lambda_i \in \mb{Z}_+$ for  $i=1,\ldots ,k$. Then we get 
\[ R(m,v(a))=R(0,\kappa)+\sum_i R\lambda_i(m_i,v(a_i)).\] 
This proves in particular that $R\kappa\in \Gamma$. Since $(0,R\kappa), (m_i,v(a_i))\in S \; (i=1,\ldots, k)$ and $R\lambda_i\in \mb{Z}_+$, we conclude that  $(Rm, Rv(a))$ is also in the semigroup  $S$. {This means that there is a $b \in K$ with $v(b)=v(a^R)$ such that $b\chi^{Rm} \in A$. Since $a^{R}=ub$ for a unit in $\kcirc$, we get} $(a\chi^m)^R\in A$. By normality of $A$, this implies that $a\chi^m\in A$. We conclude that $K[M]^\sigma=A$ and hence $\ms{Y}=\ms{V}_\sigma$.  \qed

\begin{proof}[Proof of Theorem \ref{cone}] 
We assume that $v$ is not a discrete valuation. We have seen in Propositions \ref{cone proposition} and \ref{finitepresentation} that the map $\sigma \mapsto \Vcal_\sigma$ from the set of those  $\Gamma$-admissible cones in $N_\rdop \times \rdop_+$ for which  the vertices of $\sigma \cap (N_\rdop \times \{1\})$ are contained in $N_\Gamma \times \{1\}$ to the set of isomorphism classes of affine normal  $\tdop$-toric varieties over $\kcirc$ is surjective.  By \cite[Proposition 6.24]{gubler12}, we can reconstruct the cone $\sigma$  from the $\tdop$-toric scheme $\Vcal_\sigma$ by applying the tropicalization map to the set of integral points of $T \cap \Vcal_\sigma$ and hence the correspondence is indeed bijective. 
If $v$ is a discrete valuation, then the same argument works if we omit the additional condition on the vertices of the cones.
\end{proof}

\section{Construction of the Cartier divisor} \label{Construction of the Cartier divisor}

Let $(K,v)$ be a valued field with valuation ring $\kcirc$, value group $\Gamma=v(K^\times)\subset \rdop$ and residue field $\ktilde$. Let $\tdop$ be the split torus of rank $n$ over $\kcirc$. In this section, we consider a non-empty affine open subset $\Ucal_0$ in a normal $\tdop$-toric variety $\Ycal$ over $\kcirc$. 

{The main goal is to construct an effective Cartier divisor $D$ on the smallest $\tdop$-invariant open subset $\Ucal$ of $\Ycal$ containing $\Ucal_0$  with support equal to  $\Ucal \setminus \Ucal_0$. This will be achieved in Proposition \ref{Cartier divisor to Dcal}. We will start by noting in Proposition \ref{complement is Weil divisor} that normality of $\Ycal$ yields that the complement of $\Ucal_0$ is a Weil divisor. Using the divisorial intersection theory from  \S \ref{divisors on varieties}, we will deduce in Proposition \ref{linear equivalence of translates} that all translates  of this Weil divisor  are rationally equivalent on $\Ucal$. This will be enough to deduce that the Weil divisor is actually a Cartier divisor on $\Ucal$ proving Proposition \ref{Cartier divisor to Dcal}. In fact, it will also follow in Corollary \ref{Cartier divisor and translation} that $\Ocal(D)$ is generated by global sections. This will be important in the next section, where we will show that $D$ is ample and gives rise to an equivariant immersion of $\Ucal$ to a projective space with a linear $\tdop$-action. This will allow us in \S \ref{Proof of Sumihiro} to reduce the proof of Sumihiro's theorem to an easier projective variant.}

\begin{prop} \label{complement is Weil divisor}
Let $\Ucal_0$ be a non-empty affine open subset of a normal variety $\Ycal$ over $\kcirc$. Then every irreducible component of $\Ycal \setminus \Ucal_0$ has codimension $1$ in $\Ycal$. 
\end{prop}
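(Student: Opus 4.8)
Let $\Ucal_0$ be a non-empty affine open subset of a normal variety $\Ycal$ over $\kcirc$. Then every irreducible component of $\Ycal \setminus \Ucal_0$ has codimension $1$ in $\Ycal$.

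My proof plan is as follows.

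First I would reduce to a statement about affine schemes and apply algebraic Hartogs-type reasoning, the obstacle being that $\kcirc$ is not noetherian, so one cannot directly invoke the usual theorem that the complement of an affine open in a separated normal noetherian scheme is pure of codimension one. Instead, I would split the analysis along the two points of $\Spec(\kcirc)$, using Proposition \ref{noetherian topological space} (so $\Ycal$ is a noetherian topological space and the generic fibre $\Ycal_\eta$ is a normal variety of dimension $d$ over $K$ while each component of $\Ycal_s$ has dimension $d$). Let $W$ be an irreducible component of the closed set $\Ycal \setminus \Ucal_0$, with generic point $w$.

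The key step is to show $\Ocal_{\Ycal,w}$ is a valuation ring, hence forces $W$ to have codimension one. Suppose $\codim(W,\Ycal) \geq 2$. If $w$ lies in $\Ycal_\eta$, then $W$ is contained in the complement of the affine open $\Ucal_0 \cap \Ycal_\eta$ in the normal variety $\Ycal_\eta$ over the field $K$; by the classical result (the complement of an affine open in a separated normal variety over a field is pure of codimension one — e.g.\ \cite[\S 1.2]{fulton93} combined with Hartogs for normal varieties), $W$ has codimension one in $\Ycal_\eta$, hence in $\Ycal$, a contradiction. So we may assume $w \in \Ycal_s$. Here I would use that $\Ucal_0 = \Spec(A)$ with $A$ a normal domain finitely generated over $\kcirc$, and that by Proposition \ref{intersection of local rings}(d), $A = \bigcap_y \Ocal_{\Ucal_0,y}$ where $y$ ranges over dense points of divisors of the generic fibre and generic points of the special fibre. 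Pick a non-zero rational function $f$ on $\Ycal$ that is regular on $\Ucal_0$ but not regular at $w$ — such an $f$ exists because $W \not\subset \overline{\Ucal_0}$ in the sense that $w \notin \Ucal_0$, so $\Ocal_{\Ycal,w} \subsetneq \Ocal_{\Ycal_\eta,\eta_{\Ycal_\eta}} = K(\Ycal)$ and we can choose $f \in A$ whose polar locus on $\Ycal$ meets $W$. Consider the Weil divisor $\cyc(\Div(f))$ on the normal variety $\Ycal$ (Corollary \ref{equality of supports}): it is a codimension-one cycle whose support contains the polar locus of $f$. Since $f \in A$ has no poles on $\Ucal_0$, the support of the polar part of $\cyc(\Div(f))$ is contained in $\Ycal \setminus \Ucal_0$; if its support met $W$ it would do so in a codimension-one subvariety contained in $W$, contradicting $\codim(W,\Ycal)\geq 2$ unless that codimension-one piece equals $W$ — which is excluded by the assumption $\codim(W,\Ycal) \ge 2$. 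Thus $f$ must in fact be regular along all of $W$ minus a proper closed subset, and running this over all $f \in A$ together with Proposition \ref{intersection of local rings}(d) applied to $\Ocal_{\Ycal,w}$ (which is, being localised from a normal finitely generated $\kcirc$-algebra, again normal and coherent) shows $A$ localises into $\Ocal_{\Ycal,w}$, i.e.\ $\Ucal_0$ already contains $w$ — contradiction.

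The cleanest way to run the last argument is probably via the valuation-ring criterion: if $\codim(W,\Ycal) \geq 2$, then $\Ocal_{\Ycal,w}$ is not a valuation ring by Proposition \ref{intersection of local rings}(a) (since $w$ is neither a generic point of $\Ycal_s$ nor $\Ycal_\eta$, nor a dense point of a divisor of $\Ycal_\eta$ — the latter would mean codimension one). But a normal domain that is a localisation of a normal finitely generated $\kcirc$-algebra and is not a valuation ring is, by Proposition \ref{intersection of local rings}(d), the intersection of strictly smaller valuation overrings $\Ocal_{\Ycal,z}$ over height-one-type points $z$; each such $z$ lies in the closure of $w$ but is a proper specialisation or generisation, and one of them must already lie in $\Ucal_0$ because $\Ucal_0$ is dense and open. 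Pulling $f$ back from $\Ucal_0$ and using $A = \bigcap \Ocal_{\Ucal_0,y}$ we then get $f \in \Ocal_{\Ycal,w}$ for every $f$ regular on $\Ucal_0$, forcing $w \in \Ucal_0$, the desired contradiction. The main obstacle throughout is the non-noetherianity of $\kcirc$, which I circumvent precisely by substituting Knaf's structure results (Proposition \ref{intersection of local rings}) for the usual noetherian normality arguments; the rest is the standard reduction to the generic fibre plus bookkeeping on the special fibre.
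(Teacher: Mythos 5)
Your overall strategy -- a Hartogs-type argument resting on Knaf's description $A=\bigcap_y\Ocal_{\Ycal,y}$ from Proposition \ref{intersection of local rings}(d) -- is the same as the paper's, but the execution has genuine gaps. First, the long middle argument is circular: you begin by choosing $f\in\Ocal(\Ucal_0)$ that is \emph{not} regular at $w$, asserting such an $f$ exists because $w\notin\Ucal_0$; but the nonexistence of such an $f$ (equivalently $\Ocal(\Ucal_0)\subset\Ocal_{\Ycal,w}$) is exactly what has to be proved, and $\Ocal_{\Ycal,w}\subsetneq K(\Ycal)$ does not produce one. Second, your ``cleanest'' version invokes a \emph{local} statement, namely that $\Ocal_{\Ycal,w}$ is the intersection of the valuation rings $\Ocal_{\Ycal,z}$ at its codimension-one-type generizations $z$. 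Proposition \ref{intersection of local rings}(d) is a global statement about the finitely generated algebra $A$; localization does not formally commute with this infinite intersection, so the local version would need a separate argument from Knaf's results. (Also, the relevant $z$ are generizations of $w$, not points ``in the closure of $w$'', and you need \emph{all} of them to lie in $\Ucal_0$, not ``one of them''; that they all do follows not from density of $\Ucal_0$ but from the maximality of $W$ among irreducible subsets of $\Ycal\setminus\Ucal_0$, which forbids any proper generization of $w$ from lying in the complement.)

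The paper's proof avoids all of this with a global reduction: discard the codimension-one components of $\Ycal\setminus\Ucal_0$, so one must show that a complement of codimension $\geq 2$ is empty; reduce to $\Ycal=\Spec(A)$ affine; then every point of type (b) or (c) of $\Ycal$ lies in $\Ucal_0$, so Proposition \ref{intersection of local rings}(d) applied to both $\Ycal$ and the affine open $\Ucal_0$ gives $\Ocal(\Ycal)=\bigcap_y\Ocal_{\Ycal,y}=\Ocal(\Ucal_0)$, whence $\Ucal_0=\Ycal$. If you want to keep your pointwise formulation, the fix is to shrink to an affine neighbourhood of $w$ meeting no other component of $\Ycal\setminus\Ucal_0$ and run the same global intersection argument there; no case split between generic and special fibre and no local version of (d) is then needed.
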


\proof By removing the irreducible components of $\Ycal \setminus \Ucal_0$ of codimension $1$, we may assume that $\Ycal \setminus \Ucal_0$ has no irreducible components of codimension $1$. Then we have to prove $\Ucal_0 = \Ycal$. We may assume that $\Ycal$ is an affine variety $\Spec(A)$.  Using Proposition \ref{intersection of local rings}(d), we get $\Ocal(\Ucal_0)=\Ocal(\Ycal)$ and hence the affine varieties $\Ucal_0$ and $\Ycal$ are equal. \qed

\vspace{2mm}

In the following result, we will use the notions introduced in Section \ref{divisors on varieties}.

\begin{prop} \label{surjectivity of pullback}
Let $p_2$ be the canonical projection of $\tdop \times_{K^\circ} \Ycal$ onto the variety $\Ycal$ over $\kcirc$ and let $\ms{D}$ be a cycle of codimension $1$ in $\tdop \times_{K^\circ} \Ycal$. Then there is a cycle $\ms{D}'$ on $\Ycal$ of codimension $1$  such that $p_2^*(\ms{D}')$ is rationally equivalent to  $\ms{D}$.
\end{prop}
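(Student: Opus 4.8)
The plan is to mimic the classical argument showing that for a torus $\tdop$ acting on a variety, the Chow group of $\tdop \times \Ycal$ is (via pullback) the same as the Chow group of $\Ycal$, since a torus is an open subset of affine space $\adop^n$ (namely $\tdop = \Spec(\kcirc[M])$ sits inside $\adop^n_{\kcirc}$ as the complement of the coordinate hyperplanes). First I would reduce to the case where $\ms{D}$ is a prime cycle $W$ of codimension $1$. There are two kinds of prime cycles to handle: horizontal ones (prime components living in the generic fibre) and vertical ones (closed subvarieties of the special fibre $(\tdop \times_{\kcirc} \Ycal)_s = (\Tor)_{\ktilde} \times_{\ktilde} \Ycal_s$). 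In either case one works on the appropriate scheme over a field ($K$ for horizontal, $\ktilde$ for vertical), so the usual algebraic-geometry toolkit applies directly with no non-noetherian complications.

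The key step is the homotopy/localization argument. Choose a closed embedding $\tdop \hookrightarrow \adop^n_{\kcirc}$ as the complement of the union $H$ of the $n$ coordinate hyperplanes. Then $\tdop \times_{\kcirc} \Ycal$ is the complement of $H \times_{\kcirc} \Ycal$ inside $\adop^n_{\kcirc} \times_{\kcirc} \Ycal$. By the standard exact sequence for Chow groups of an open subset, $CH^1$ of the open set $\tdop \times_{\kcirc}\Ycal$ is a quotient of $CH^1(\adop^n_{\kcirc} \times_{\kcirc}\Ycal)$ modulo the classes supported on $H \times_{\kcirc}\Ycal$. Now $CH^1$ of $\adop^n_K \times_K Y$ is generated by $p_2^*$ of divisors on $Y$ together with the pullbacks of hyperplane-type divisors from $\adop^n_K$ (by the classical homotopy invariance $CH^*(\adop^n_Z) \cong CH^*(Z)$ applied fibrewise, iterating the affine-bundle argument, e.g. \cite[Theorem 3.3]{fulton98}), and the same holds for the vertical/special-fibre part over $\ktilde$. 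Each coordinate hyperplane pulled back to $\adop^n_{\kcirc}\times_{\kcirc}\Ycal$ becomes rationally equivalent to $0$ once we pass to $\tdop \times_{\kcirc}\Ycal$ (it is the divisor of the rational function $\chi^{e_i}$, the $i$-th coordinate, which is a unit on the torus). Hence modulo rational equivalence on $\tdop\times_{\kcirc}\Ycal$, every codimension-one cycle $\ms{D}$ is equivalent to $p_2^*(\ms{D}')$ for some codimension-one cycle $\ms{D}'$ on $\Ycal$.

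Concretely I would run the argument one factor of $\adop^1$ at a time: for $\adop^1_{\kcirc}\times_{\kcirc}\Ycal \to \Ycal$, flat pullback $p_2^*$ on $CH^1$ is surjective (this is the affine-line homotopy invariance, valid over the field on each of the generic and special fibres, where Proposition \ref{flat pullback and associated Weil divisor} guarantees compatibility of $\cyc$ with the flat pullback), then pass to the open complement of $\{x_1 = 0\}$, which kills the extra class coming from that hyperplane, using the localization sequence; iterate $n$ times. At each stage one uses that rational equivalence is compatible with flat pullback (stated in \ref{equivalence}) and that removing a closed subset only kills classes. I would assemble the horizontal and vertical parts separately and then take the formal sum to obtain the desired $\ms{D}'$ on $\Ycal$.

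The main obstacle, and the place needing care, is that $\Ycal$ is a variety over the possibly non-noetherian ring $\kcirc$, so I cannot directly cite the usual statements about Chow groups and affine-bundle homotopy invariance as black boxes over $\kcirc$. The resolution is that the whole construction of $CH^1$ here (from \ref{equivalence}) is built on the generic fibre over the field $K$ plus the special fibre over the field $\ktilde$, and $p_2$ respects this decomposition because $\tdop \times_{\kcirc}\Ycal$ has generic fibre $T \times_K Y$ and special fibre $(\Tor)_{\ktilde}\times_{\ktilde}\Ycal_s$. So everything reduces to two honest statements over fields, where the classical homotopy invariance and localization sequences are available (\cite[Theorem 3.3, Proposition 1.8]{fulton98}); one only needs to check that the flat pullback used in the reduction matches the one in Proposition \ref{flat pullback and associated Weil divisor}, which is immediate from the definitions in \ref{flat pullback}. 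A minor additional point to verify is that the rational functions $\chi^{e_i}$ realizing the hyperplane classes as principal divisors are global on $\tdop \times_{\kcirc}\Ycal$ and restrict correctly on both fibres, which is clear since they come from the coordinate ring $\kcirc[M]$ of $\tdop$.
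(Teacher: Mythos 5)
Your proposal is correct and follows essentially the same route as the paper: the vertical part is on the nose a pullback because the irreducible components of $(\tdop \times_{K^\circ} \Ycal)_s$ are the products $\tdop_s \times_{\ktilde} V$ with $V$ a component of $\Ycal_s$, and the horizontal part is the classical statement over the field $K$ (the paper simply cites \cite[Proposition 1.9]{fulton98}, whereas you unwind its proof via $\adop^n$-homotopy invariance and localization). One small point worth making explicit: the rational function $g$ on $T\times_K Y$ realizing the horizontal equivalence contributes, through $\cyc(\Div(g))$ in the sense of \ref{associated Weil divisor}, an additional vertical term on $\tdop \times_{K^\circ}\Ycal$, which is harmless precisely because every vertical cycle of codimension $1$ there is again a pullback and can be absorbed into $\ms{D}'$.
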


\proof We note that  irreducible components of $(\tdop \times_{K^\circ} \Ycal)_s$ are given by $\tdop_s \times_{\widetilde{K}} V$ with $V$ ranging over the irreducible components of $\Ycal_s$. We conclude that every vertical cycle of codimension $1$ in $\tdop \times_{K^\circ} \Ycal$ is the pull-back of a vertical cycle of codimension $1$ in $\Ycal$. This reduces the claim to the horizontal parts where it is a standard fact from algebraic intersection theory on varieties over a field \cite[Proposition 1.9]{fulton98}. \qed

\begin{art} \label{pullback by it} \rm 
 {The generic fibre of $\tdop$ is denoted by $T$. Let  $T^\circ$ be the affinoid torus in $\Tan$ given by $\{x \in \Tan \mid |x_1(x)|= \dots = |x_n(x)|=1\}$} in terms of torus coordinates $x_1, \dots, x_n$ and let $\Ycal$ be a variety over $\kcirc$ with generic fibre $Y$. 
For $t \in T^\circ(K)$, the reduction $\tilde{t} \in \tdop_s(\ktilde)$ is well-defined. Let $i_t: \Ycal \rightarrow \tdop \times_{K^\circ} \Ycal$ be the embedding over $\Ycal$ induced by the integral point  of $\tdop$ corresponding to $t$. We are going to define the pull-back $i_t^*(\Zcal)$ for every cycle $\Zcal$ on $\tdop \times_{K^\circ} \Ycal$ which satisfies the following {\it flatness condition}: We assume that every component of the horizontal (resp. vertical) part of $\Zcal$ is flat over $T$ (resp. $\tdop_s$).  

Since $i_t$ induces a regular embedding of $Y$ into $T \times_{K} Y$ (resp. of $\Ycal_s$ into $\tdop_s \times_{\widetilde{K}} \Ycal_s$), the pull-back of the horizontal part (resp. vertical part) of $\Zcal$ is a well-defined cycle  on $Y$ (resp. $\Ycal_s$) (see \cite[Chapter 6]{fulton98}). We define $i_t^*(\Zcal)$ as the sum of these two pull-backs. Clearly, this pull-back keeps the codimension and is linear in $\Zcal$.  
\end{art}

\begin{art} \label{interpretation as divisoral intersection} \rm 
For $t \in T^\circ(K)$ with coordinates $t_1:=x_1(t), \dots , t_n:=x_n(t)$, let $D_{t_j}$ be the Cartier divisor on $\tdop \times_{K^\circ} \Ycal$ given by pull-back of $\Div(x_j-t_j)$ with respect to the canonical projection onto  $\tdop = (\Tor)_\kcirc$. Let $\Zcal$ be a cycle on $\tdop \times_{K^\circ} \Ycal$ satisfying the  flatness condition from \ref{pullback by it}. Then we may use the proper intersection product with Cartier divisors from \ref{proper intersection with cycles}  to get 
$$(i_t)_*(i_t^*(\Zcal))= D_{t_1} \dots D_{t_n}.\Zcal.$$
Indeed,   the  flatness condition  ensures that the right hand side is a proper intersection product and hence the claim follows from \cite[Example 6.5.1]{fulton98}. By Proposition \ref{commutativity}, the proper intersection product on the right  is symmetric with respect to the Cartier divisors.
\end{art}

Let $\Ycal, \Ycal'$ be varieties over $\kcirc$ and let $\varphi:\tdop \times_{K^\circ} \Ycal \rightarrow \tdop \times_{K^\circ} \Ycal'$ be a flat morphism over $\tdop$. The point $t \in T^\circ(K)$ corresponds to an integral point of $\tdop$  inducing a flat morphism $\varphi_t:\Ycal \rightarrow \Ycal'$ by base change from $\varphi$. 
We recall from \ref{flat pullback} that we have also introduced the pull-back with respect to flat morphisms. The following result shows some functoriality with the above pull-backs. We will use the canonical projection $p_2:\tdop\times_{K^\circ} \Ycal' \rightarrow \Ycal'$. 

\begin{prop} \label{functoriality}
Under the hypothesis above, let $\Zcal'$ be a cycle of $\Ycal'$. Then the cycle $\varphi^*(p_2^*(\Zcal'))$ satisfies the flatness condition from \ref{pullback by it} and we have $i_t^*(\varphi^*(p_2^*(\Zcal')))=\varphi_t^*(\Zcal')$.
\end{prop}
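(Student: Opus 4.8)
The plan is to reduce everything to a statement about cycles over the base field and over the residue field, where the analogous functoriality of refined Gysin pull-backs is classical. First I would observe that the cycle $p_2^*(\Zcal')$ decomposes into a horizontal part (the pull-back of the horizontal part of $\Zcal'$ from $Y'$ to $T\times_K Y'$) and a vertical part (the pull-back from $\Ycal_s'$ to $\tdop_s\times_{\ktilde}\Ycal_s'$); since $p_2$ is flat, each component of $p_2^*(\Zcal')$ is the pull-back of a prime cycle and is therefore flat over $\tdop$ (respectively over $T$ on the horizontal part, over $\tdop_s$ on the vertical part). Applying the flat morphism $\varphi$ over $\tdop$ preserves this flatness over $\tdop$ by transitivity of flat pull-back, so $\varphi^*(p_2^*(\Zcal'))$ still has all horizontal components flat over $T$ and all vertical components flat over $\tdop_s$. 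Since $\varphi_t$ is obtained from $\varphi$ by base change along the integral point $t$, the cycle $\varphi_t^*(p_2^*(\Zcal'))$ inherits the flatness condition from \ref{pullback by it}; this is really just commutativity of flat pull-back with base change. That settles the first assertion.

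For the identity $i_t^*(\varphi^*(p_2^*(\Zcal')))=\varphi_t^*(\Zcal')$, I would treat the horizontal and vertical parts separately, so by linearity assume $\Zcal'$ is a single prime cycle $W'$. On the generic fibre we have a Cartesian-type square: $i_t:Y\hookrightarrow T\times_K Y$ and $i_t:Y'\hookrightarrow T\times_K Y'$ are the regular embeddings cutting out the fibre over $t$, and $\varphi$ restricts to a flat morphism between the generic fibres compatible with $\varphi_t$. Pulling back $p_2^*(W')$ along $\varphi$ and then along $i_t$ computes the fibre of $\varphi^*(p_2^*(W'))$ over $t$; but $\varphi^*(p_2^*(W'))=\varphi^*(p_2'^*(W'))$ is, by functoriality of flat pull-back, the pull-back of $W'$ along the composite $T\times_K Y\xrightarrow{\varphi} T\times_K Y'\xrightarrow{p_2'} Y'$, and restricting this to the fibre over $t$ gives exactly the flat pull-back of $W'$ along $p_2'\circ\varphi\circ i_t=\varphi_t$, i.e. $\varphi_t^*(W')$. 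The vertical part is identical with $K,Y$ replaced by $\ktilde,\Ycal_s$ and $t$ replaced by its reduction $\tilde t\in\tdop_s(\ktilde)$. The key input is the compatibility of refined Gysin pull-back with flat pull-back, which is \cite[Proposition 6.2 and Theorem 6.4]{fulton98}, together with the elementary fact that the fibre of a flat pull-back over an integral point is computed by base change.

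The main obstacle I anticipate is purely bookkeeping: one must be careful that ``pull-back along $i_t$'' in \ref{pullback by it} is defined via the regular embedding $i_t$ and the refined intersection formalism of \cite[Chapter 6]{fulton98}, while ``$\varphi_t^*$'' and ``$p_2^*$'' are the naive flat pull-backs from \ref{flat pullback}; the content of the proposition is precisely that these two formalisms agree in the present situation. Once the cycle is reduced to a prime $W'$ and one notes that $p_2^*(W')$ and $\varphi^*(p_2^*(W'))$ are flat over $\tdop$, the regular embedding $i_t$ meets them properly and \cite[Example 6.5.1]{fulton98} (already invoked in \ref{interpretation as divisoral intersection}) identifies $i_t^*$ with the honest base-change operation, after which functoriality of flat pull-back closes the argument. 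No non-trivial estimate or new geometric input is required beyond what \S\ref{divisors on varieties} already provides; the subtlety is entirely in matching up the two notions of pull-back on horizontal and vertical parts simultaneously.
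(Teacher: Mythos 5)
Your argument is correct and takes essentially the same route as the paper: both proofs split the cycle into its horizontal and vertical parts, observe that all pull-backs involved are then the classical operations for varieties over $K$ (resp.\ over $\ktilde$), and conclude by the compatibility of the Gysin pull-back along the regular embedding $i_t$ with flat pull-back from \cite[Chapter 6]{fulton98}. The paper's proof is a three-line version of exactly this, differing only in which statement of Fulton is cited (Proposition 6.5 there, versus your Propositions 6.2/6.4 and Example 6.5.1), which is immaterial.
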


\proof {The} cycle $p_2^*(\Zcal)$ satisfies the flatness condition. Using that $\varphi$ is a flat morphism over $\tdop$, we deduce that  $\varphi^*(p_2^*(\Zcal'))$ also fulfills the  flatness condition . Since the pull-backs are defined for horizontal and vertical parts in terms of the corresponding operations for varieties over fields, the claim follows from  \cite[Proposition 6.5]{fulton98}, {where the functoriality for pull-backs of cycles of varieties has been proved}. \qed

\begin{lemma} \label{it and linear equivalence}
Let $\Ycal$ be a variety over $\kcirc$ and let $t \in T^\circ(K)$. Suppose that $g$ is a rational function on $\tdop \times_{K^\circ} \Ycal$ such that every irreducible component of  the support of the restriction of $\Div(g)$ to the generic fibre is flat over $T$.  Then $g(t,\cdot)$ is a rational function on $\Ycal$ and we have $i_t^*(\cyc(\Div(g)))=\cyc(\Div(g(t,\cdot)))$.
\end{lemma}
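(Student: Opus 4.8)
The strategy is to reduce the identity to the two classical situations — intersection theory on varieties over the field $K$ for the horizontal part, and intersection theory on varieties over $\ktilde$ for the vertical part — exactly as the pull-back $i_t^*$ and the operator $\cyc(\cdot)$ were set up in Sections \ref{divisors on varieties} and \ref{pullback by it}. First I would check that $g(t,\cdot)$ is a well-defined nonzero rational function on $\Ycal$: since $t$ corresponds to an integral point of $\tdop$, the closed immersion $i_t$ identifies the generic fibre $Y$ with the fibre $\{t\}\times_K Y$ of $T\times_K Y$ over the rational point $t\in T^\circ(K)$; the flatness hypothesis on the components of $\Div(g)|_{\te{generic fibre}}$ guarantees that neither the zero locus nor the polar locus of $g$ contains this fibre, so $g$ restricts to a nonzero rational function on $Y$, and similarly $g$ restricts to a rational function on $\Ycal_s$ via reduction $\tilde t$. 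Together these give the rational function $g(t,\cdot)$ on $\Ycal$.

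\textbf{Reduction to the two fibres.} Next I would split $\cyc(\Div(g))$ into its horizontal part (the Weil divisor on $Y$ associated to $\Div(g)|_Y$) and its vertical part, and apply $i_t^*$ to each separately, as in \ref{pullback by it}. For the horizontal part: $i_t$ induces a regular embedding of $Y$ into $T\times_K Y$, and the flatness condition is precisely what makes the intersection of this embedding with the divisor $\Div(g|_{T\times_K Y})$ proper; hence \cite[Chapter 6, esp. Example 6.5.1]{fulton98} applied on the variety $Y$ over the field $K$ gives $i_t^*(\cyc(\Div(g)|_Y))=\cyc((g|_{T\times_K Y})(t,\cdot))=\cyc(g(t,\cdot)|_Y)$. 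For the vertical part: the components of the vertical part of $\Div(g)$ are flat over $\tdop_s$, and $i_t$ restricts to the regular embedding of $\Ycal_s$ into $\tdop_s\times_{\ktilde}\Ycal_s$ given by the reduction $\tilde t\in\tdop_s(\ktilde)$; the analogous classical statement \cite[Chapter 6]{fulton98} over the field $\ktilde$ yields $i_t^*(\te{(vertical part of }\cyc(\Div(g))\te{)})=\te{vertical part of }\cyc(g(t,\cdot))$. Summing the two pieces gives $i_t^*(\cyc(\Div(g)))=\cyc(g(t,\cdot))$.

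\textbf{Main obstacle.} The routine-looking but genuinely delicate point is matching the bookkeeping of vertical multiplicities: $\cyc(\Div(g))$ on $\tdop\times_{\kcirc}\Ycal$ was defined in \ref{associated Weil divisor}--\ref{multiplicity of D in V} through formal completion along the special fibre and base change to $\cdop_K^\circ$, with multiplicities $\ord(\Div(g),W)$ coming from analytic absolute values, whereas after restricting to the fibre over $t$ one wants these to agree with the honest intersection multiplicities computed on $\Ycal_s$ over $\ktilde$. The way I would handle this is to invoke \ref{interpretation as divisoral intersection}: writing $D_{t_1},\dots,D_{t_n}$ for the Cartier divisors cutting out $\{t\}\times\Ycal$ inside $\tdop\times\Ycal$, one has $(i_t)_*(i_t^*(\cyc(\Div(g))))=D_{t_1}\dots D_{t_n}.\cyc(\Div(g))$, and by Proposition \ref{commutativity} this proper intersection product may be reordered to $\cyc(\Div(g)).D_{t_1}\dots D_{t_n}$; restricting $\Div(g)$ to the variety $\{t\}\times\Ycal\cong\Ycal$ first turns the right-hand side into $\cyc\big(\Div(g)|_{\{t\}\times\Ycal}\big)=\cyc(g(t,\cdot))$, pushed forward by $(i_t)_*$. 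Since $(i_t)_*$ is injective on cycles (it is a closed immersion), this proves the claim; Proposition \ref{flat pullback and associated Weil divisor} and Proposition \ref{commutativity} do all the non-trivial work of reconciling the two definitions of multiplicity, so no direct manipulation of the analytic formula \eqref{ord2} is needed.
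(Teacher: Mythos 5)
Your proposal is correct and, in its decisive step, is exactly the paper's argument: the paper's proof also disposes of the first claim via the flatness hypothesis, notes that vertical components are automatically flat over $\tdop_s$, and then derives the identity by writing $i_t^*$ as the $n$-fold proper intersection product $D_{t_1}\cdots D_{t_n}.(\,\cdot\,)$ from \ref{interpretation as divisoral intersection} and reordering with Proposition \ref{commutativity}. The fibre-by-fibre discussion in your first two paragraphs is harmless preliminary framing, but the "main obstacle" paragraph is the proof the authors give.
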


\proof The  flatness assumption yields the first claim immediately. Note that  the  vertical components of $\cyc(\Div(g))$ are  automatically flat over $\tdop_s$ and hence we get a well-defined cycle $i_t^*(\cyc(\Div(g)))$  on $\Ycal$. The second claim follows easily from the fact that we may write $i_t^*$ as an $n$-fold proper intersection product with Cartier divisors (see \ref{interpretation as divisoral intersection}) and from Proposition \ref{commutativity}. \qed

\begin{prop} \label{first Chow group isomorphism}
Let $\Ycal$ be a variety over $\kcirc$. Then pull-back with respect to the canonical projection $p_2: \tdop \times_{K^\circ} \Ycal \rightarrow \Ycal$ induces an isomorphism $p_2^*: CH^1(\Ycal) \rightarrow CH^1(\tdop \times_{K^\circ} \Ycal)$.
\end{prop}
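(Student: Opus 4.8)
The plan is to establish the isomorphism $p_2^* \colon CH^1(\Ycal) \to CH^1(\tdop \times_{K^\circ} \Ycal)$ by exhibiting an explicit inverse built from the specialization maps $i_t^*$. Surjectivity is already known: it is precisely the content of Proposition \ref{surjectivity of pullback}, which says every cycle of codimension $1$ on $\tdop \times_{K^\circ} \Ycal$ is rationally equivalent to a pull-back $p_2^*(\Dcal')$. So the real work is injectivity, i.e.\ showing that if $p_2^*(\Dcal')$ is rationally equivalent to zero on $\tdop \times_{K^\circ} \Ycal$, then $\Dcal'$ is rationally equivalent to zero on $\Ycal$.

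First I would reduce to the following situation: suppose $\Dcal'$ is a cycle of codimension $1$ on $\Ycal$ with $p_2^*(\Dcal') = \cyc(\Div(g))$ for some nonzero rational function $g$ on $\tdop \times_{K^\circ} \Ycal$. I want to conclude $\Dcal' = \cyc(\Div(f))$ for some rational $f$ on $\Ycal$. The idea is to pick a suitable $t \in T^\circ(K)$ and apply $i_t^*$. The point is that $i_t^* \circ p_2^* = \id$ on cycles of $\Ycal$ (this is immediate from the definitions in \ref{pullback by it}, since $p_2 \circ i_t = \id_{\Ycal}$ and the specialization of a product is the product), so $i_t^*(p_2^*(\Dcal')) = \Dcal'$. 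On the other hand, $i_t^*(\cyc(\Div(g))) = \cyc(g(t,\cdot))$ by Lemma \ref{it and linear equivalence}, provided the components of $\Div(g)$ in the generic fibre are flat over $T$. Hence $\Dcal' = \cyc(\Div(g(t,\cdot)))$, which is exactly rational equivalence to zero, and injectivity follows — once the flatness hypothesis is arranged.

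The main obstacle, therefore, is the flatness condition needed to invoke Lemma \ref{it and linear equivalence}: the given $g$ need not have $\Div(g)$ with all generic-fibre components flat over $T$, and $i_t^*$ is only defined for cycles satisfying the flatness condition of \ref{pullback by it}. The remedy I would pursue is to move $g$ by a rational function pulled back from $\Ycal$: replacing $g$ by $g \cdot p_2^*(h)^{-1}$ changes $\cyc(\Div(g))$ only by $p_2^*(\cyc(\Div h))$, which does not affect the argument, so it suffices to arrange flatness of $\Div(g)$ modulo such modifications. Concretely, a prime divisor $W$ on $T \times_K Y$ (the generic fibre) fails to be flat over $T$ exactly when it is contained in a fibre, i.e.\ when it has the form $\{x\} \times Y_\eta$ or, more generally, is supported over a proper closed subset of $T$; since $T$ is a torus and hence $CH^*(T)$ is trivial in positive codimension, such "vertical-over-$T$" divisors are rationally equivalent on $T\times_K Y$ to pull-backs from $Y$, which is essentially the mechanism behind Proposition \ref{surjectivity of pullback}. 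I would make this precise by first using Proposition \ref{surjectivity of pullback} to write $\cyc(\Div g)$ itself in the normalized form $p_2^*(\Dcal'') + \cyc(\Div g_0)$ where $g_0$ is adapted (its divisor flat over $T$), and then run the $i_t^*$ argument on $g_0$.

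Let me also note the choice of $t$: I need a $t \in T^\circ(K)$ for which the relevant reductions $\tilde t$ are generic enough that $i_t^*$ genuinely computes a proper intersection and no component of $\Div(g_0)$ gets absorbed; since there are only finitely many "bad" loci in $T_s$ and in $T$ and $K$ is (after the harmless reductions already used elsewhere in the paper) large enough, such $t$ exists — this is the same kind of general-position argument used in \ref{interpretation as divisoral intersection}. Putting these pieces together: surjectivity from Proposition \ref{surjectivity of pullback}, and injectivity from the identity $i_t^* \circ p_2^* = \id$ combined with Lemma \ref{it and linear equivalence} applied to a flatness-normalized representative of $g$. The argument is a direct adaptation of the classical statement $CH^*(T \times X) \cong CH^*(X)$ for a torus $T$ over a field, the only new subtlety being the bookkeeping between horizontal and vertical parts over $\kcirc$, which is handled uniformly because all the operations involved were defined componentwise in \ref{pullback by it} and \ref{interpretation as divisoral intersection}.
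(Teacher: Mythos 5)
Your core argument is exactly the paper's: surjectivity from Proposition \ref{surjectivity of pullback}, and injectivity by applying $i_t^*$ to the relation $p_2^*(\Dcal')=\cyc(\Div(g))$, using $i_t^*\circ p_2^*=\id$ (Proposition \ref{functoriality}) on the left and Lemma \ref{it and linear equivalence} on the right; the paper simply takes $t=e$. However, the ``main obstacle'' you identify is not actually there, and the detour you build around it is both unnecessary and, as written, does not close. The flatness hypothesis of Lemma \ref{it and linear equivalence} is automatic in this situation: the cycle $\cyc(\Div(g))$ \emph{equals} $p_2^*(\Dcal')$ by hypothesis, and a flat pull-back along $p_2$ always satisfies the flatness condition of \ref{pullback by it} (its horizontal components are of the form $T\times_K W$ and its vertical ones of the form $\tdop_s\times_{\ktilde}W$, which are flat over $T$, resp.\ $\tdop_s$) --- this is the first line of the proof of Proposition \ref{functoriality}. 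For the same reason no general-position choice of $t$ is needed. By contrast, your proposed normalization $\cyc(\Div g)=p_2^*(\Dcal'')+\cyc(\Div g_0)$ via Proposition \ref{surjectivity of pullback}, if you ran the $i_t^*$ argument on $g_0$, would only yield that $\Dcal'$ is rationally equivalent to $\Dcal''$, not to $0$, so it does not by itself finish injectivity; and Proposition \ref{surjectivity of pullback} does not guarantee that the correcting rational function is ``adapted''. Dropping the detour and applying the lemma directly at $t=e$ gives the clean (and correct) proof.
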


\proof By \ref{equivalence}, $p_2^*$ is compatible with rational equivalence and hence it is well-defined on the Chow groups. Surjectivity follows from Proposition \ref{surjectivity of pullback}. Suppose that $\Dcal$ is a cycle of codimension $1$ on $\Ycal$ such that $p_2^*(\Dcal)$ is rationally equivalent to $0$ on $\tdop \times_{K^\circ} \Ycal$. Using  Lemma \ref{it and linear equivalence} for the unit element $e$ in $T^\circ(K)$, we deduce that $\Dcal$ is rationally equivalent to $0$. This proves injectivity. \qed

\vspace{2mm} 

We have a similar statement for Picard group as pointed out by Qing Liu and C. P\'epin. 

\begin{prop} \label{Picard group isomorphism}
Let $\Ycal$ be a normal variety over $\kcirc$. Then pull-back with respect to $p_2$ induces an isomorphism $\Pic(\Ycal) \rightarrow \Pic(\tdop \times_{K^\circ} \Ycal)$. 
\end{prop}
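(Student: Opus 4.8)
The plan is to deduce the isomorphism on Picard groups from the analogous statement for the first Chow group (Proposition \ref{first Chow group isomorphism}), using the fact that on a normal variety over $\kcirc$ the map $D \mapsto \cyc(D)$ is injective (Corollary \ref{equality of supports}(c)) and that both $\Ycal$ and $\tdop \times_{K^\circ} \Ycal$ are normal. First I would observe that $\tdop \times_{K^\circ} \Ycal$ is again a normal variety over $\kcirc$: normality is preserved since $\tdop$ is a smooth $\kcirc$-scheme, so the projection $p_2$ is smooth and normality descends/ascends along it. Hence both Picard groups inject into the respective first Chow groups via $D \mapsto \cyc(\Div(D))$, compatibly with $p_2^*$ by Proposition \ref{flat pullback and associated Weil divisor}. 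This gives a commutative square whose bottom arrow $p_2^*:CH^1(\Ycal)\to CH^1(\tdop\times_{K^\circ}\Ycal)$ is an isomorphism and whose vertical arrows are injective, so $p_2^*:\Pic(\Ycal)\to\Pic(\tdop\times_{K^\circ}\Ycal)$ is automatically injective; the content is surjectivity.

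For injectivity directly, one can also argue as in Proposition \ref{first Chow group isomorphism}: if $p_2^*\Lcal$ is trivial, restrict along the section $i_e$ for the unit $e \in T^\circ(K)$ to recover $\Lcal \cong i_e^*p_2^*\Lcal \cong \Ocal_\Ycal$. For surjectivity, take an invertible sheaf $\Mcal$ on $\tdop \times_{K^\circ}\Ycal$ and represent it by a Cartier divisor $\ms{E}$. By Proposition \ref{surjectivity of pullback} (or Proposition \ref{first Chow group isomorphism}) there is a cycle $\Dcal'$ of codimension $1$ on $\Ycal$ and a rational function $g$ on $\tdop\times_{K^\circ}\Ycal$ with $\cyc(\ms{E}) = p_2^*(\Dcal') + \cyc(\Div(g))$. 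The remaining task is to promote $\Dcal'$ to an actual Cartier divisor: since $\ms{E}$ is Cartier and $\cyc(\ms{E})-\cyc(\Div(g))$ is already a pull-back $p_2^*(\Dcal')$, one checks locally on $\Ycal$ that $\Dcal'$ is Cartier. Concretely, for an affine open $\Spec(A)\subset\Ycal$, the divisor $\ms{E}-\Div(g)$ is given on $\tdop\times\Spec(A)=\Spec(A[M])$ by a single rational function $h$ that is a unit times the pull-back of the local equation of $\Dcal'$; using the $M$-grading of $A[M]$ and the invertibility of $h$ in codimension zero one extracts a local equation in $A$ for $\Dcal'$, so $\Dcal'$ is a Cartier divisor $D'$ with $p_2^*(D')$ linearly equivalent to $\ms{E}$, whence $p_2^*\Ocal(D')\cong\Mcal$.

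The main obstacle I expect is precisely this last promotion step: descending from a Weil-divisor statement to a Cartier-divisor statement over the non-noetherian base $\kcirc$, i.e. showing that the class $\Dcal'$ on $\Ycal$ producing $\ms E$ after pull-back is genuinely locally principal and not merely a Weil divisor. In the noetherian toric setting this is handled by local factoriality, which is unavailable here; instead the argument must exploit that $h$ is a global unit-translate of the pull-back of a rational function on $A$ together with the $M$-homogeneity coming from the $\tdop$-action, reducing the problem to the fibre over the unit section where $\ms E$ is already known to be Cartier. Once the local equation on $\Ycal$ is produced, bilinearity and Corollary \ref{equality of supports} finish the proof. (It is worth noting that the hypothesis of normality is used exactly here, through Corollary \ref{equality of supports} and Proposition \ref{intersection of local rings}, and that this is why the statement is phrased for normal $\Ycal$ whereas Proposition \ref{first Chow group isomorphism} needs no normality.)
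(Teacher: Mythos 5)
The paper does not actually prove this proposition: its ``proof'' is the single line ``See \cite[Remark 9.6]{gubler12}'' (the result is credited to Q.~Liu and C.~P\'epin), so any self-contained argument is necessarily a different route. Your overall strategy --- injectivity via the unit section $i_e$, surjectivity by representing an invertible sheaf by a Cartier divisor $\ms{E}$, writing $\cyc(\ms{E})=p_2^*(\Dcal')+\cyc(\Div(g))$ via Proposition \ref{surjectivity of pullback}, and then promoting $\Dcal'$ to a Cartier divisor --- is viable with the tools of \S\ref{divisors on varieties} and \S\ref{Construction of the Cartier divisor}. However, the promotion step as you actually write it has a genuine gap. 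You assert that on an affine open $\Spec(A)\subset\Ycal$ the Cartier divisor $\ms{E}-\Div(g)$ is ``given on $\Spec(A[M])$ by a single rational function $h$''; a Cartier divisor on an affine scheme is locally principal but in general not principal, so no such global $h$ need exist, and once you pass to smaller opens of $\tdop\times_{K^\circ}\Ycal$ they are no longer of the form $\tdop\times(\text{open of }\Ycal)$, so the $M$-grading of $A[M]$ is no longer available. The phrase ``one extracts a local equation in $A$ for $\Dcal'$'' is exactly the point at issue and is not an argument.

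The gap is fixable, and in fact by the device you mention only in passing at the end: restrict along the unit section. Set $\ms{E}':=\ms{E}-\Div(g)$, a Cartier divisor on the normal variety $\tdop\times_{K^\circ}\Ycal$ (normal because $p_2$ is smooth with normal target) with $\cyc(\ms{E}')=p_2^*(\Dcal')$; by Corollary \ref{equality of supports}(a) its support equals $p_2^{-1}(\supp(\Dcal'))$, which does not contain $\{e\}\times\Ycal$, so the restriction $D':=i_e^*(\ms{E}')$ is a well-defined Cartier divisor on $\Ycal$. The compatibilities in \ref{interpretation as divisoral intersection} together with Propositions \ref{commutativity} and \ref{functoriality} give $\cyc(D')=i_e^*(\cyc(\ms{E}'))=i_e^*(p_2^*(\Dcal'))=\Dcal'$; hence $\cyc(p_2^*(D'))=p_2^*(\Dcal')=\cyc(\ms{E}')$ by Proposition \ref{flat pullback and associated Weil divisor}, and Corollary \ref{equality of supports}(c) applied to $\tdop\times_{K^\circ}\Ycal$ forces $p_2^*(D')=\ms{E}-\Div(g)$. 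This yields $p_2^*\Ocal(D')\cong\Ocal(\ms{E})$ and completes surjectivity without any appeal to local factoriality or to the grading; injectivity is, as you say, immediate from $i_e^*\circ p_2^*=\id$. With this replacement your proof is correct.
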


\proof {This statement was proved in \cite[Remark 9.6]{gubler12} based on an argument of Qing Liu and C. P\'epin.} \qed

\vspace{2mm}
Now let $\Ycal$ be a normal $\tdop$-toric variety  over $\kcirc$ and let $\Dcal$ be a cycle of codimension $1$ in $\Ycal$.  Note that $t \in T^\circ(K)$ acts on $\Ycal$ and we denote by $\Dcal^t$ the pull-back of $\Dcal$ with respect to this flat morphism.

\begin{prop} \label{linear equivalence of translates}
Under the hypothesis above,  $\Dcal^t$ is rationally equivalent to $\Dcal$.  
\end{prop}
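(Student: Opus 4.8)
The statement asserts that for $t \in T^\circ(K)$ acting on a normal $\tdop$-toric variety $\Ycal$ over $\kcirc$, the translate $\Dcal^t$ of a codimension-$1$ cycle $\Dcal$ is rationally equivalent to $\Dcal$. The strategy is to realize the torus translation as the specialization at $t$ of the global action morphism, and then to exploit the fact established in Proposition~\ref{first Chow group isomorphism} that $CH^1$ is insensitive to the extra torus factor. Concretely, let $\mu: \tdop \times_{\kcirc} \Ycal \rightarrow \Ycal$ be the action morphism; it is flat (it is the composition of the automorphism $(t,y)\mapsto(t,ty)$ of $\tdop\times_\kcirc\Ycal$ with the second projection, both flat). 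Consider the two cycles $\mu^*(\Dcal)$ and $p_2^*(\Dcal)$ on $\tdop \times_{\kcirc} \Ycal$.

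\textbf{Step 1: The two pullbacks are rationally equivalent.} By Proposition~\ref{first Chow group isomorphism}, the map $p_2^*: CH^1(\Ycal) \rightarrow CH^1(\tdop \times_{\kcirc} \Ycal)$ is an isomorphism. Both $\mu^*(\Dcal)$ and $p_2^*(\Dcal)$ restrict, via the embedding $i_e$ at the unit element $e \in T^\circ(K)$, to $\Dcal$ on $\Ycal$ (since $\mu_e = \id_\Ycal = (p_2)_e$); more precisely, applying Proposition~\ref{functoriality} with $\varphi = \mathrm{id}$ resp.\ with $\varphi$ the shear automorphism, we get $i_e^*(\mu^*(p_2^*\cdots))$ — wait, I should phrase this via the already-proven injectivity argument: since $p_2^*$ is injective on Chow groups and $\mu = p_2 \circ (\text{shear})$ with the shear an automorphism inducing the identity on $CH^1$ compatibly, $\mu^*(\Dcal)$ and $p_2^*(\Dcal)$ have the same class. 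In fact the cleanest route is: the shear automorphism $\tau:(t,y)\mapsto(t,ty)$ satisfies $p_2\circ\tau=\mu$ and $\tau$ fixes rational equivalence, so $[\mu^*\Dcal]=[\tau^*p_2^*\Dcal]=\tau^*[p_2^*\Dcal]$, and since $p_2^*$ is surjective we may write $[p_2^*\Dcal]=p_2^*[\Dcal]$ which is $\tau$-invariant (as $p_2\circ\tau$ and $p_2$ agree after the isomorphism $CH^1(\Ycal)\cong CH^1(\tdop\times_\kcirc\Ycal)$). Thus $\mu^*(\Dcal) - p_2^*(\Dcal) = \cyc(\Div(g))$ for a suitable rational function $g$ on $\tdop \times_{\kcirc} \Ycal$.

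\textbf{Step 2: Specialize at $t$.} Restrict the relation $\mu^*(\Dcal) - p_2^*(\Dcal) = \cyc(\Div(g))$ along $i_t: \Ycal \rightarrow \tdop \times_{\kcirc} \Ycal$. By Proposition~\ref{functoriality}, $i_t^*(\mu^*(\Dcal)) = \Dcal^t$ (taking $\varphi$ to be the shear so that $\varphi_t$ is translation by $t$) and $i_t^*(p_2^*(\Dcal)) = \Dcal$ (taking $\varphi = \id$). To handle the right-hand side we invoke Lemma~\ref{it and linear equivalence}, which requires that every irreducible component of $\Div(g)|_{T\times_K Y}$ be flat over $T$; after possibly modifying $g$ by a unit on a torus-invariant open, this holds because $\mu^*\Dcal - p_2^*\Dcal$ has components flat over $\tdop$ (the components of $p_2^*\Dcal$ obviously are, and those of $\mu^*\Dcal$ are too since $\mu$ is flat over $\tdop$). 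Then $i_t^*(\cyc(\Div(g))) = \cyc(\Div(g(t,\cdot)))$, so $\Dcal^t - \Dcal = \cyc(\Div(g(t,\cdot)))$, proving rational equivalence.

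\textbf{Main obstacle.} The delicate point is the flatness bookkeeping in Step~2: Lemma~\ref{it and linear equivalence} and Proposition~\ref{functoriality} both demand that the cycles and divisors involved satisfy the flatness condition over $\tdop$ (resp.\ $T$), and one must argue that the rational function $g$ produced in Step~1 can be chosen (possibly after shrinking to a $\tdop$-invariant open subset, which does not affect rational equivalence classes) so that $\Div(g)$ meets this hypothesis — or else split $g$'s divisor into the good part $\mu^*\Dcal-p_2^*\Dcal$ and absorb the rest into genuinely flat pieces. One should also verify that the reduction $\tilde{t}$ being well-defined (so that $i_t$ is a well-defined integral point and the vertical pull-backs make sense) is exactly the condition $t \in T^\circ(K)$, which is built into the hypothesis. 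Everything else is a formal consequence of the Chow-group isomorphism and the functoriality already established.
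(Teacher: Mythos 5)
Your overall architecture is exactly the paper's: first show that $\sigma^*(\Dcal)$ and $p_2^*(\Dcal)$ are rationally equivalent on $\tdop \times_{K^\circ} \Ycal$, then specialize along $i_t$ using Proposition \ref{functoriality} and Lemma \ref{it and linear equivalence}. Your Step 2 is fine. The problem is the justification you finally commit to in Step 1: asserting that the shear $\tau$ ``fixes rational equivalence'' in the sense that $\tau^*$ acts as the identity on $CH^1(\tdop\times_{K^\circ}\Ycal)$, or that $p_2^*[\Dcal]$ is ``$\tau$-invariant because $p_2\circ\tau$ and $p_2$ agree after the isomorphism,'' is circular --- since $p_2\circ\tau=\sigma$, the statement that $\tau^*$ fixes the class $p_2^*[\Dcal]$ is literally the statement $[\sigma^*\Dcal]=[p_2^*\Dcal]$ that you are trying to prove. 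Injectivity of $p_2^*$ is of no use here either, because $\sigma^*(\Dcal)$ is not known a priori to lie in the image of $p_2^*$. The correct repair is the argument you start and then abandon mid-sentence: by Proposition \ref{surjectivity of pullback}, $[\sigma^*(\Dcal)]=[p_2^*(\Dcal')]$ for some codimension-$1$ cycle $\Dcal'$ on $\Ycal$; applying $i_e^*$ to both sides (legitimate by Lemma \ref{it and linear equivalence} and Proposition \ref{functoriality}, since $\sigma\circ i_e=\id=p_2\circ i_e$) gives $\Dcal\sim\Dcal'$, hence $[\sigma^*(\Dcal)]=[p_2^*(\Dcal)]$. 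That is precisely the paper's proof.

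Your ``main obstacle'' about flatness is not actually an obstacle, and the proposed fix (modifying $g$ by a unit on an invariant open) is unnecessary: the rational function $g$ produced by Step 1 satisfies $\cyc(\Div(g))=\sigma^*(\Dcal)-p_2^*(\Dcal')$, and every prime component of either of these pull-backs is flat over $T$ (resp. over $\tdop_s$) --- for $p_2^*$ this is obvious, and for $\sigma^*$ the preimage of a subvariety $W$ is carried by the shear isomorphism to $\tdop\times_{K^\circ} W$. So the hypothesis of Lemma \ref{it and linear equivalence} holds automatically for this particular $g$, with no shrinking or splitting required.
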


\proof Let $\sigma: \tdop \times_{K^\circ} \Ycal \rightarrow \Ycal$ be  the torus action on $\Ycal$. It follows from Propositions \ref{surjectivity of pullback} that $\sigma^*(\Dcal)$ is rationally equivalent to $p_2^*(\Dcal')$ for a cycle $\Dcal'$ of codimension $1$ in $\Ycal$. Then Lemma \ref{it and linear equivalence} and Proposition  \ref{functoriality}, applied for the unit element $e$, show that $\Dcal$ is rationally equivalent to $\Dcal'$. By  \ref{equivalence}, we conclude that 
$\sigma^*(\ms{D})$ is rationally equivalent to $p_2^*(\ms{D})$. If we apply Proposition \ref{functoriality} again, but now in $t$ instead  of $e$,  we get the claim. \qed


\begin{lemma} \label{T-invariance of U}
Let $\Ucal_0$ be a non-empty open subset of the  $\tdop$-toric variety $\Ycal$ over $\kcirc$ and let $\Ucal := \bigcup_{t \in T^\circ(K)} t\Ucal_0$. Then $\ms{U}$ is the smallest $\mb{T}$-invariant (open) subset containing $\Ucal_0$.
\end{lemma}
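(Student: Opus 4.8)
The plan is to first dispose of the two easy assertions and then to concentrate on $\tdop$-invariance of $\Ucal$. Each $t\in T^\circ(K)$ is an integral point of $\tdop$, hence acts on $\Ycal$ as an automorphism over $\kcirc$, so every $t\Ucal_0$ is open and $\Ucal=\bigcup_{t\in T^\circ(K)}t\Ucal_0$ is an open set containing $\Ucal_0$. Since $T^\circ(K)$ is a subgroup of $T(K)$, reindexing the union gives $t'\Ucal=\Ucal$ for all $t'\in T^\circ(K)$; and any $\tdop$-invariant open $W$ with $\Ucal_0\subseteq W$ is in particular stable under every integral point of $\tdop$, so $t\Ucal_0\subseteq W$ for all $t$ and therefore $\Ucal\subseteq W$. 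Thus it remains only to show that $\Ucal$ is itself $\tdop$-invariant.

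For this, let $a\colon\tdop\times_\kcirc\Ycal\to\Ycal$ denote the torus action and $p_2$ the second projection. Since $a$ is the composite of the automorphism $(g,x)\mapsto(g,gx)$ of $\tdop\times_\kcirc\Ycal$ with $p_2$, and $p_2$ is flat and of finite presentation, $a$ is an open morphism; so I would work with the open set $\Ucal':=a(\tdop\times_\kcirc\Ucal_0)$. Using associativity of $a$ and that translation on $\tdop$ by a fixed integral point is an automorphism, one finds $a(\tdop\times_\kcirc t\Ucal_0)=\Ucal'$ for every $t\in T^\circ(K)$, whence $a(\tdop\times_\kcirc\Ucal)=\Ucal'$; the same computation shows that $\Ucal'$ is $\tdop$-invariant, and clearly $\Ucal\subseteq\Ucal'$. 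As $\Ucal$ is open, $\tdop$-invariance of $\Ucal$ is equivalent to $a(\tdop\times_\kcirc\Ucal)\subseteq\Ucal$, i.e. to the reverse inclusion $\Ucal'\subseteq\Ucal$. Hence the lemma reduces to the assertion that every point of $a(\tdop\times_\kcirc\Ucal_0)$ already lies in $t\Ucal_0$ for some $t\in T^\circ(K)$.

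I would prove this by showing that the closed subset $W:=a^{-1}(\Ycal\setminus\Ucal)\cap(\tdop\times_\kcirc\Ucal_0)$ of the irreducible $\kcirc$-scheme $\tdop\times_\kcirc\Ucal_0$ is empty. For each $t\in T^\circ(K)$ the embedding $i_t$ from \ref{pullback by it} satisfies $a\circ i_t=(\text{translation by }t)$, which maps $\Ucal_0$ onto $t\Ucal_0\subseteq\Ucal$; hence $i_t^{-1}(W)=\emptyset$, so $W$ meets none of the sections $i_t(\Ucal_0)$, and one checks directly that $W$ is stable under left translation by $T^\circ(K)$ on the first factor. It then remains to feed in the density of the integral points of $\tdop$ — namely that $T^\circ(K)=\tdop(\kcirc)$ is Zariski dense in $\tdop$ and reduces onto $\tdop_s(\ktilde)$ — and to combine it with the description of the special fibre of a toric scheme as a union of toric varieties over $\ktilde$ on which $\tdop_s$ acts (together, if needed, with an approximation and base-change argument in the spirit of \S\ref{divisors on varieties}), in order to rule out a nonempty, translation-stable closed set disjoint from all the $i_t(\Ucal_0)$. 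This last density step is the point where the non-noetherian nature of $\kcirc$ really bites: unlike in the classical proof over an algebraically closed field, $\Ycal$ is not Jacobson and $T^\circ(K)$ need not be Zariski dense in the special fibre torus, so one cannot argue merely on closed points; everything else in the argument is formal.
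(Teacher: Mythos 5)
Your formal reductions are all fine: openness of $\Ucal$, minimality among $\tdop$-invariant open sets containing $\Ucal_0$, stability of $\Ucal$ under $T^\circ(K)$, and the equivalence of $\tdop$-invariance with the emptiness of $W=a^{-1}(\Ycal\setminus\Ucal)\cap(\tdop\times_{\kcirc}\Ucal_0)$. But the decisive step is exactly the one you leave open, so there is a genuine gap: you record that $W$ is closed, misses every section $i_t(\Ucal_0)$ and is stable under left translation by $T^\circ(K)$, and then appeal to an unspecified ``density plus approximation'' argument while simultaneously (and correctly) observing that the naive density statement fails on the special fibre. Nothing in the proposal actually rules out a nonempty $W$; the invocation of \S 2 (divisors and formal schemes) is not relevant to this point.

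The missing idea is to exploit the \emph{group} structure fibre by fibre. The paper considers the set $S$ of points of $\tdop$ whose translation preserves $\Ucal$, equivalently the closed complement $\Ycal\setminus\Ucal$. On the generic fibre, $S\cap T$ is an algebraic subgroup of $T$ containing $T^\circ(K)$, and $T^\circ(K)$ is Zariski dense in $T$ because $T^\circ$ is the $n$-dimensional affinoid torus; hence $S\cap T=T$. On the special fibre, $S\cap\tdop_s$ is the stabilizer of the closed set $\Ycal_s\setminus\Ucal_s$, hence an algebraic subgroup of $\tdop_s$, and it contains all of $\tdop_s(\ktilde)$ because the reduction map $T^\circ(K)=\tdop(\kcirc)\to\tdop_s(\ktilde)$ is surjective for the split torus; hence $S\cap\tdop_s=\tdop_s$. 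This is precisely what dissolves your worry: one does not need $T^\circ(K)$ to be dense in the special fibre, only that its reductions fill out $\tdop_s(\ktilde)$, after which one argues with a subgroup (or, in your formulation, with the closed fibre $W_u$ over a point $u$ of $\Ucal_0$, which is stable under a Zariski-dense group of translations of the torus over $\kappa(u)$ and avoids the identity, hence is empty). That two-fibre density-and-subgroup argument is the actual content of the lemma, and it is absent from your proposal.
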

\begin{proof}
Consider the subset $S$ of $\mb{T}$ such that translation with its elements leaves $\ms{U}$ invariant. The subset $S \cap \tdop_s$ is equal to the stabilizer of  $\Ycal_s \setminus \ms{U}_s$ and hence it is an algebraic subgroup of $\mb{T}_s$. By construction, it contains $\tdop_s(\ktilde)$ and hence it is equal to $\mb{T}_s$. We use the same argument for the points of $S$ contained in the generic fibre $T=\tdop_\eta$. Again, $S \cap T$ is an algebraic subgroup containing  $T^\circ(K)$.  Since $T^\circ$ is an $n$-dimensional affinoid torus, we conclude that $T^\circ(K)$ is Zariski dense in $T$ and hence the algebraic subgroup is the torus $T$ over $K$. We conclude that $\ms{U}$ is $\mb{T}$-invariant. This proves the claim immediately. 
\end{proof}

\begin{art} \label{choice of Dcal} \rm
Since the torus $\tdop_s$ acts continuously on the discrete set of the generic points of $\Ycal_s$, every such generic point is fixed under the action. We conclude that every irreducible component of $\Ycal_s$ is invariant under the $\tdop$-action. This means that the special fibres of $\Ucal$ and $\Ucal_0$ have the same generic points. We have seen in Proposition \ref{complement is Weil divisor} that $\Ucal \setminus \Ucal_0$ is a union of irreducible components of codimension $1$ and hence every such irreducible component is horizontal. Let $\Dcal$ be the horizontal cycle on $\Ucal$ given by the formal sum of these irreducible components. 
\end{art}

\begin{prop} \label{Cartier divisor to Dcal}
Under the hypothesis above, there is a unique Cartier divisor $D$ on $\Ucal$ such that $\Dcal = \cyc(D)$. Moreover, this Cartier divisor is effective.
\end{prop}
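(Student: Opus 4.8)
The plan is to produce the Cartier divisor $D$ locally on an affine open cover of $\Ucal$, check that the local equations agree up to units on overlaps, and then use the injectivity of $D \mapsto \cyc(D)$ from Corollary \ref{equality of supports}(c) to get uniqueness and Corollary \ref{equality of supports}(b) to get effectivity. First I would observe that every point $y \in \Ucal$ lies in a translate $t\Ucal_0$ for some $t \in T^\circ(K)$, by Lemma \ref{T-invariance of U} and the description of $\Ucal$ there. So it suffices to build $D$ on each $t\Ucal_0$ compatibly. On $\Ucal_0$ itself the construction is easy: $\Ucal_0$ is affine, so write $\Ucal_0 = \Spec(A_0)$; by Proposition \ref{intersection of local rings}(d), $A_0$ is the intersection of the valuation rings $\Ocal_{\Ycal,y}$ over the dense points $y$ of divisors of the generic fibre and the generic points of the special fibre, and here $\Dcal$ restricted to $\Ucal_0$ is zero (since $\Ucal\setminus\Ucal_0$ is disjoint from $\Ucal_0$), so the zero Cartier divisor works.

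The substantive point is to extend across $\Ucal\setminus\Ucal_0$. The clean way: the cycle $\Dcal$ is supported on $\Ucal \setminus \Ucal_0$, and by Proposition \ref{linear equivalence of translates} the translate $\Dcal^{t^{-1}}$ is rationally equivalent to $\Dcal$ on $\Ucal$; but $\Dcal^{t^{-1}}$ restricted to $t\Ucal_0$ is zero (the support of $\Dcal$ is contained in $\Ucal\setminus \Ucal_0$, and translating by $t^{-1}$ carries $t\Ucal_0$ to $\Ucal_0$ on which $\Dcal$ vanishes — more precisely $\Dcal^{t^{-1}}$ has support in $\Ucal\setminus t\Ucal_0$). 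Hence on $t\Ucal_0$ there is a rational function $f_t$ with $\cyc(\Div(f_t)) = \Dcal|_{t\Ucal_0}$, which is to say the Cartier divisor $\Div(f_t)$ on the affine $t\Ucal_0$ has associated Weil divisor $\Dcal|_{t\Ucal_0}$. This gives local Cartier data $(t\Ucal_0, f_t)$ for $D$ on the open cover $\{t\Ucal_0\}_{t\in T^\circ(K)}$.

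Next I would verify these glue to a global Cartier divisor: on an overlap $t\Ucal_0 \cap t'\Ucal_0$ the two rational functions $f_t, f_{t'}$ satisfy $\cyc(\Div(f_t/f_{t'})) = 0$ as a cycle on the normal variety $t\Ucal_0\cap t'\Ucal_0$, so by Corollary \ref{equality of supports}(a) the Cartier divisor $\Div(f_t/f_{t'})$ has empty support, i.e.\ $f_t/f_{t'}$ is a unit in every local ring — exactly the compatibility needed for the $(t\Ucal_0,f_t)$ to define a Cartier divisor $D$ on $\Ucal$. By construction $\cyc(D) = \Dcal$. Uniqueness of $D$ is then immediate from Corollary \ref{equality of supports}(c), and effectivity follows from Corollary \ref{equality of supports}(b) since $\Dcal$ is effective (it is a sum of prime divisors with coefficient $1$). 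The main obstacle I anticipate is the existence of the local equation $f_t$ with prescribed associated Weil divisor on the affine normal $\kcirc$-variety $t\Ucal_0$: in the noetherian setting one invokes that a divisorial ideal on a normal affine variety, locally free in codimension $1$ and trivial outside a closed subset of codimension $\geq 2$, is trivial; here one must instead run this through the coherence and Prüfer $v$-multiplication property of $A_0$ from Proposition \ref{intersection of local rings} together with the rational-equivalence input of Proposition \ref{linear equivalence of translates}, which is precisely why the divisor theory of \S\ref{divisors on varieties} was set up. The rest is routine gluing.
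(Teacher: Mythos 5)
Your proposal is correct and follows essentially the same route as the paper: use Proposition \ref{linear equivalence of translates} to get, for each $t \in T^\circ(K)$, a rational function $f_t$ with $\Dcal-\Dcal^{t}=\cyc(\Div(f_t))$, note that $\Dcal^{t}$ vanishes on the translate of $\Ucal_0$ so that $f_t$ is a local equation for $\Dcal$ there, glue via Corollary \ref{equality of supports}, and read off uniqueness and effectivity from parts (c) and (b). The only remark is that the ``main obstacle'' you flag at the end — producing a local equation with prescribed associated Weil divisor on the affine pieces — is already fully dispatched by your own second paragraph (the global rational function $f_t$ from the rational equivalence restricts to the required local equation), so no further appeal to the Pr\"ufer/coherence properties is needed beyond what is packaged in Proposition \ref{linear equivalence of translates} and Corollary \ref{equality of supports}.
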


\proof For $t \in T^\circ(K)$,  Proposition \ref{linear equivalence of translates} yields a non-zero  rational function $f_t$ on $\Ucal$ such that $\Dcal - \Dcal^t = \cyc(\Div(f_t))$. Since $\Ucal \setminus t^{-1} \Ucal_0$ is equal to the support of $\Dcal^t$, we deduce that the restriction of $\Dcal$ to $t^{-1} \Ucal_0$ is the Weil divisor given by the rational function $f_t$ on $t^{-1} \Ucal_0$.   By Corollary \ref{equality of supports}, the Cartier divisor on a normal variety is uniquely determined by its associated Weil divisor. This yields immediately that $\{(t^{-1} \Ucal_0, f_t)\mid t \in T^\circ(K)\}$ is a Cartier divisor on $\Ucal$ with associated Weil divisor $\Dcal$ and uniqueness follows as well. By Corollary \ref{equality of supports}, the Cartier divisor $D$ is effective.
\qed

\begin{prop} \label{Cartier divisor and multiplication}
Let $\sigma:\tdop \times_{K^\circ} \Ycal \rightarrow \Ycal$ be the torus action of the normal $\tdop$-toric variety $\Ycal$ over $\kcirc$ and let $D$ be  the Cartier divisor from Proposition \ref{Cartier divisor to Dcal}. Then $\sigma^*(D)$ is linearly equivalent to $p_2^*(D)$. 
\end{prop}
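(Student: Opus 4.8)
The plan is to compare $\sigma^*(D)$ and $p_2^*(D)$ as Cartier divisors on $\tdop \times_{K^\circ} \Ycal$ by identifying them both through their associated Weil divisors and exploiting the uniqueness statement for Cartier divisors on normal varieties (Corollary \ref{equality of supports}(c)). First I would record that $\tdop \times_{K^\circ} \Ycal$ is again a normal variety over $\kcirc$ (normality is stable under the smooth base change by the torus, or one checks it directly on the affine charts $\Ucal = \bigcup_t t^{-1}\Ucal_0$ after base change), so that by Corollary \ref{equality of supports}(c) it suffices to prove that $\cyc(\sigma^*(D))$ and $\cyc(p_2^*(D))$ are \emph{linearly} equivalent, i.e. differ by $\cyc(\Div(g))$ for a single rational function $g$. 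Since $\sigma$ and $p_2$ are both flat morphisms over $\kcirc$, Proposition \ref{flat pullback and associated Weil divisor} gives $\cyc(\sigma^*(D)) = \sigma^*(\cyc(D)) = \sigma^*(\Dcal)$ and $\cyc(p_2^*(D)) = p_2^*(\Dcal)$, where $\Dcal$ is the horizontal cycle of \ref{choice of Dcal}.

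Now the argument already appearing inside the proof of Proposition \ref{linear equivalence of translates} does exactly what is needed: applying Proposition \ref{surjectivity of pullback} to the codimension-one cycle $\sigma^*(\Dcal)$ on $\tdop \times_{K^\circ} \Ycal$, we obtain a codimension-one cycle $\Dcal'$ on $\Ycal$ with $\sigma^*(\Dcal)$ rationally equivalent to $p_2^*(\Dcal')$. Pulling back along the unit section $i_e$ and using Lemma \ref{it and linear equivalence} together with Proposition \ref{functoriality} (applied at $t = e$, where $\sigma_e = \id$ and $(p_2)_e = \id$), one gets $\Dcal$ rationally equivalent to $\Dcal'$ on $\Ycal$; hence by the compatibility of $p_2^*$ with rational equivalence (\ref{equivalence}, or Proposition \ref{first Chow group isomorphism}) we conclude that $p_2^*(\Dcal)$ is rationally equivalent to $p_2^*(\Dcal')$, and therefore $\sigma^*(\Dcal)$ is rationally equivalent to $p_2^*(\Dcal)$. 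So there is a nonzero rational function $g$ on $\tdop \times_{K^\circ} \Ycal$ with $\sigma^*(\Dcal) - p_2^*(\Dcal) = \cyc(\Div(g))$.

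It remains to upgrade this rational equivalence of cycles to a \emph{linear} equivalence of Cartier divisors, i.e. to check that the Cartier divisor $\sigma^*(D) - p_2^*(D) - \Div(g)$ is zero. But its associated Weil divisor is $\sigma^*(\Dcal) - p_2^*(\Dcal) - \cyc(\Div(g)) = 0$, and since $\tdop \times_{K^\circ} \Ycal$ is normal, Corollary \ref{equality of supports}(c) forces the Cartier divisor itself to vanish. Hence $\sigma^*(D) - p_2^*(D) = \Div(g)$, which is precisely the claimed linear equivalence. The one point requiring a little care — the main (minor) obstacle — is the verification that $\tdop \times_{K^\circ} \Ycal$ is normal so that the injectivity statement of Corollary \ref{equality of supports}(c) and Proposition \ref{flat pullback and associated Weil divisor} both apply; alternatively one can bypass this by invoking Proposition \ref{Picard group isomorphism} directly, noting that $\sigma^*\Ocal(D)$ and $p_2^*\Ocal(D)$ have the same image under the isomorphism $\Pic(\Ycal) \to \Pic(\tdop \times_{K^\circ} \Ycal)$ once one knows their restrictions to $\{e\} \times \Ycal$ agree, which again follows from the unit-section computation above.
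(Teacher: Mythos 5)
Your proposal is correct and follows the paper's own argument: the paper deduces the claim from the section $i_e$ induced by the unit element together with Proposition \ref{Picard group isomorphism}, and explicitly offers as an alternative exactly the route you develop in detail, namely the codimension-one cycle statement (Proposition \ref{first Chow group isomorphism}, whose proof is the surjectivity-plus-$i_e^*$ argument you spell out) combined with the injectivity of $D \mapsto \cyc(D)$ from Corollary \ref{equality of supports} on the normal variety $\tdop \times_{K^\circ} \Ucal$. Your added remark on checking normality of the product (e.g.\ via the affine charts $\Spec(A[M])$) is a sensible point of care that the paper leaves implicit.
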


\proof The unit element $e$ in $T^\circ(K)$ induces the section $i_e$ of $\sigma$ and $p_2$. Then the claim follows from Proposition \ref{Picard group isomorphism}. Another way to deduce the claim is to use the corresponding statement for cycles of codimension $1$ (see Proposition \ref{first Chow group isomorphism}) together with Corollary \ref{equality of supports}. \qed

\begin{corollary} \label{Cartier divisor and translation}
Let $D$ be the Cartier divisor from Proposition \ref{Cartier divisor to Dcal} and let $D^t$ be its pull-back with respect to translation by $t \in T^\circ(K)$. Then the invertible sheaves $\Ocal(D^t)$ and $\Ocal(D)$ on $\Ucal$ are isomorphic. Moreover, $\Ocal(D)$ is generated by global sections.
\end{corollary}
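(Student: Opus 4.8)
The plan is to derive both assertions from the results already established, chiefly Proposition \ref{Cartier divisor and multiplication} and Corollary \ref{equality of supports}. For the first claim, I would fix $t \in T^\circ(K)$ and recall that translation by $t$ is the flat morphism $\tau_t : \Ucal \to \Ucal$ obtained by composing the section $i_t$ of $p_2$ with the torus action $\sigma$; that is, $\tau_t = \sigma \circ i_t$, so that $D^t = \tau_t^*(D) = i_t^*(\sigma^*(D))$. By Proposition \ref{Cartier divisor and multiplication} we have $\sigma^*(D) = p_2^*(D) + \Div(g)$ for some non-zero rational function $g$ on $\tdop \times_{\kcirc} \Ucal$. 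Pulling back along $i_t$ and using $p_2 \circ i_t = \id_{\Ucal}$ gives $D^t = D + \Div(g(t,\cdot))$ as Cartier divisors, where $g(t,\cdot)$ is the rational function on $\Ucal$ obtained by restriction; one should check that $i_t^*$ makes sense on these divisors, which follows from the flatness built into the situation (the components of $\sigma^*(D)$ and $p_2^*(D)$ are flat over $\tdop$ since $\sigma$ and $p_2$ are, so Lemma \ref{it and linear equivalence} applies to $\Div(g)$). Hence $D^t$ and $D$ are linearly equivalent, and therefore $\Ocal(D^t) \cong \Ocal(D)$.

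For the second claim, that $\Ocal(D)$ is generated by global sections, I would argue as follows. The constant function $1$ is a global section of $\Ocal(D)$ since $D$ is effective (Proposition \ref{Cartier divisor to Dcal}); its zero locus is exactly $\supp(D) = \Ucal \setminus \Ucal_0$ by Corollary \ref{equality of supports}(a). So $\Ocal(D)$ is already globally generated over $\Ucal_0$. Now for each $t \in T^\circ(K)$, the local equation $f_t$ of $D$ on the open set $t^{-1}\Ucal_0$ constructed in the proof of Proposition \ref{Cartier divisor to Dcal} is precisely a rational function with $\Div(f_t) = \Dcal|_{t^{-1}\Ucal_0} = D|_{t^{-1}\Ucal_0}$, which exhibits $f_t^{-1}$ as a global section of $\Ocal(D)$ that trivializes $\Ocal(D)$ on $t^{-1}\Ucal_0$ (it generates the stalk at every point of $t^{-1}\Ucal_0$, since there $D$ is the principal divisor $\Div(f_t)$). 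As $t$ ranges over $T^\circ(K)$, the open sets $t^{-1}\Ucal_0$ cover $\Ucal$ by Lemma \ref{T-invariance of U} (the family $\{t\Ucal_0\}$ covers $\Ucal$, equivalently $\{t^{-1}\Ucal_0\}$ does since $T^\circ(K)$ is a group). Thus the global sections $\{1\} \cup \{f_t^{-1} : t \in T^\circ(K)\}$ generate $\Ocal(D)$ at every point of $\Ucal$.

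The main obstacle I anticipate is a bookkeeping one rather than a conceptual one: making sure that the restriction $i_t^*$ applied to $\sigma^*(D) - p_2^*(D) = \Div(g)$ is legitimate and yields $\Div(g(t,\cdot))$ on the nose. This requires knowing that $g$, or at least $\Div(g)$, satisfies the flatness condition of \ref{pullback by it} so that Lemma \ref{it and linear equivalence} applies — and this in turn follows because $\sigma^*(D)$ and $p_2^*(D)$ both have components flat over $\tdop$ (being pullbacks along flat morphisms to $\tdop$), whence the same holds for their difference. Once this point is dispatched, both statements fall out directly. A cleaner alternative for the first statement, avoiding rational functions altogether, is simply to pull the isomorphism $\sigma^*\Ocal(D) \cong p_2^*\Ocal(D)$ of Proposition \ref{Cartier divisor and multiplication} back along $i_t$ and along $i_e$ and compare, which is essentially the argument indicated at the end of the proof of Proposition \ref{Cartier divisor and multiplication}; I would present that version as the primary one.
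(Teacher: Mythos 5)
Your proposal is correct and follows essentially the same route as the paper: the first claim is obtained by applying $i_t^*$ to the linear equivalence of Proposition \ref{Cartier divisor and multiplication} (your ``cleaner alternative'' is exactly the paper's one-line argument), and for global generation the paper transports the canonical section $s_{D^t}$ of $\Ocal(D^t)$, with support $\Ucal\setminus t^{-1}\Ucal_0$, through the isomorphism $\Ocal(D^t)\cong\Ocal(D)$ --- which is precisely your section $f_t^{-1}$ --- and then covers $\Ucal$ by the $t^{-1}\Ucal_0$ via Lemma \ref{T-invariance of U}.
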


\begin{proof} 
The first claim  follows from Proposition \ref{Cartier divisor and multiplication} by applying $i_t^*$. By Proposition \ref{Cartier divisor to Dcal}, $s_{D^t}$ is a global section with support $\Ucal \setminus t^{-1} \Ucal_0$ and hence the second claim follows from the first.
\end{proof}


\section{Linearization and  immersion into projective space} \label{linea}

Let $\tdop$ be the split torus of rank $n$ over $\kcirc$ and let $\Ycal$ be a normal $\tdop$-toric variety over $\kcirc$. We denote by $\mu:\tdop \times_{K^\circ} \tdop \rightarrow \tdop$ the multiplication map, by $\sigma:\tdop \times_{K^\circ} \Ycal \rightarrow \Ycal$ the group action and by $p_2: \tdop \times_{K^\circ} \Ycal \rightarrow \Ycal$ the second projection. As in the previous section, we consider a non-empty affine open subset $\Ucal_0$ of $\Ycal$ and the smallest $\tdop$-invariant open subset $\Ucal$ of $\Ycal$ containing $\Ucal_0$. In Proposition \ref{Cartier divisor to Dcal}, we have constructed an effective Cartier divisor $D$ on $\Ucal$ with $\supp(D)=\Ucal \setminus \Ucal_0$ such that $\cyc(D)$ is a horizontal cycle with all multiplicities equal to $1$. {In this section, we will see  that $\Ocal(D)$ has a $\tdop$-linearization (Proposition \ref{existence of linearization}) and is ample (Proposition \ref{ampleness}) leading in \ref{construction of an invariant immersion} to a $\tdop$-equivariant immersion into a projective space with linear $\tdop$-action. 
This will be summarized in Proposition \ref{the A-structure} which reduces the proof of Sumihiro's theorem to an easier projective variant shown in the next section.}

\begin{definition} \label{definition of linearization} 
First, we recall the definition of a { \it $\tdop$-linearization} of a line bundle $L$ on a toric variety (see \cite{mumford94} for details). Geometrically, a $\tdop$-linearization is a lift of the torus action on $\ms{Y}$ to an action on $L$ such that the zero section is $\mb{T}$-invariant. In terms of the underlying invertible sheaf $\ms{L}$, a $\tdop$-linearization is an isomorphism 
\[ \phi: \sigma^* \ms{L} \ra p^*_2 \ms{L}, \]
of sheaves on $\mb{T}\times_{K^\circ} \ms{Y}$  
satisfying the cocycle condition
\begin{equation}\label{comp}
p_{23}^*\phi \circ (\id_{\mb{T}}\times \sigma)^*\phi=(\mu \times \id_{\ms{Y}})^*\phi,
\end{equation}
where $p_{23}: \mb{T} \times_{K^\circ} \mb{T} \times_{K^\circ} \ms{Y} \ra \mb{T} \times_{K^\circ} \ms{Y}$ is the projection to the last two factors.
\end{definition}

We need the following {application} of a result of Rosenlicht.

\begin{lemma} \label{Rosenlicht}
For every $f \in \Ocal(\tdop \times_{K^\circ} \Ucal)^\times$, there is a character $\chi$ on $T$ and a $g \in \Ocal(\Ucal)^\times$  such that $f=\chi \cdot g$. 
\end{lemma}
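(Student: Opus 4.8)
\textbf{Proof plan for Lemma \ref{Rosenlicht}.}

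The plan is to reduce to Rosenlicht's classical unit theorem, which describes the units of $\Tor \times Z$ for a variety $Z$ over a field, and then bootstrap from the generic fibre to the whole scheme over $\kcirc$. First I would pass to the generic fibre: restricting $f \in \Ocal(\tdop \times_{K^\circ} \Ucal)^\times$ to $T \times_K \Ucal_\eta$ gives a unit on a product of the split torus $T$ with the normal $K$-variety $\Ucal_\eta$. By Rosenlicht's theorem (see e.g. \cite{mumford94} or the standard references), every such unit factors as $\chi \cdot g_\eta$ with $\chi$ a character of $T$ and $g_\eta \in \Ocal(\Ucal_\eta)^\times$; here one uses that $T$ is a split torus so its units are exactly $\kcirc^\times \cdot M$ generically, and that $\Ucal_\eta$ is a variety so $\Ocal(\Ucal_\eta)^\times / K^\times$ has no interference with the torus part. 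Thus $f$ and $\chi$ differ, on $T \times_K \Ucal_\eta$, by the pullback of $g_\eta \in \Ocal(\Ucal_\eta)^\times$.

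The remaining issue is integrality: I must show that the character $\chi$ (viewed as a function on all of $\tdop \times_{K^\circ} \Ucal$ via $\tdop = (\Tor)_\kcirc$) and the function $g := f/\chi$ are genuinely units on the $\kcirc$-models, i.e. $g \in \Ocal(\Ucal)^\times$. Since $\chi^{\pm 1}$ are monomials in the torus coordinates with coefficient $1$, they are automatically global units on $\tdop \times_{K^\circ} \Ucal$, hence $g = f \chi^{-1} \in \Ocal(\tdop \times_{K^\circ} \Ucal)^\times$ as well. Now $g$ is a unit on $\tdop \times_{K^\circ} \Ucal$ whose restriction to the generic fibre is pulled back from $\Ucal_\eta$; I want to conclude $g$ itself is pulled back from a unit on $\Ucal$. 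For this I would use the section $i_e: \Ucal \hookrightarrow \tdop \times_{K^\circ} \Ucal$ at the identity $e \in T^\circ(K)$, setting $\tilde g := i_e^*(g) = g(e,\cdot) \in \Ocal(\Ucal)^\times$, and show $g = p_2^* \tilde g$. The two units $g$ and $p_2^* \tilde g$ agree after restriction to the generic fibre (both restrict to $p_2^*(g_\eta)$ there, by construction of $g_\eta$), so their ratio $h := g/(p_2^*\tilde g) \in \Ocal(\tdop \times_{K^\circ} \Ucal)^\times$ restricts to $1$ on the generic fibre. Since $\tdop \times_{K^\circ} \Ucal$ is an integral scheme flat over $\kcirc$ with schematically dense generic fibre, a global function restricting to $1$ on the generic fibre must be $1$, giving $g = p_2^*\tilde g$ and hence $f = \chi \cdot p_2^*\tilde g$ as required.

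The step I expect to be the main obstacle is making the generic-fibre application of Rosenlicht's theorem clean: one needs $\Ucal_\eta$ to be a variety over $K$ (which it is, being open in the $T$-toric variety $\Ycal_\eta$) and to control the ambiguity in the decomposition $f_\eta = \chi \cdot g_\eta$ — the character $\chi$ is determined by $f_\eta$ modulo nothing (the split torus has no nontrivial units of its own beyond $K^\times$ and the monomials), while $g_\eta$ is determined modulo $K^\times$, which is harmless. A secondary point to be careful about is the passage from "unit on the generic fibre pulled back from the base" to "unit on the whole flat $\kcirc$-scheme pulled back from the base"; the argument via the identity section and schematic density of the generic fibre handles this, but one should make sure $\Ocal(\tdop\times_{K^\circ}\Ucal)$ injects into $\Ocal$ of its generic fibre, which follows from integrality and flatness over $\kcirc$ as recorded in \S \ref{divisors on varieties}.
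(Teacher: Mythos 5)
Your proposal is correct and takes essentially the same route as the paper: the paper's proof is a two-line citation of Rosenlicht's unit theorem \cite[Theorem 2]{rosenlicht} together with the observation that $\Ocal(\tdop)^\times = \kcirc^\times \cdot \{\chi^u \mid u \in M\}$, which is exactly your key input. Your additional care in descending from the generic fibre back to the integral model (via the identity section $i_e$ and density of the generic fibre in the integral scheme $\tdop\times_{K^\circ}\Ucal$) is a correct filling-in of a step the paper leaves implicit, not a different argument.
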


\proof Note that $\Ocal(\tdop)^\times$ is the set of characters on $T$ multiplied by units in $\kcirc$. Then the claim follows from \cite[Theorem 2]{rosenlicht}, {where it is proved in the case of fields}. \qed

\begin{prop} \label{existence of linearization}
The invertible sheaf $\Ocal(D)$ has a $\tdop$-linearization.
\end{prop}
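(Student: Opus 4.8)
The plan is to construct the $\tdop$-linearization isomorphism $\phi\colon\sigma^*\Ocal(D)\to p_2^*\Ocal(D)$ and then verify the cocycle condition. First I would recall that by Proposition \ref{Cartier divisor and multiplication} the Cartier divisors $\sigma^*(D)$ and $p_2^*(D)$ on $\tdop\times_{K^\circ}\Ucal$ are linearly equivalent, hence the invertible sheaves $\sigma^*\Ocal(D)$ and $p_2^*\Ocal(D)$ are isomorphic, and I fix one such isomorphism $\phi$. This $\phi$ is the datum of a $\tdop$-linearization; what remains is to arrange that $\phi$ satisfies the cocycle condition \eqref{comp}. The standard strategy (as in \cite{mumford94}) is to show that any isomorphism $\phi$ automatically satisfies the cocycle identity up to a \emph{constant} in $\Ocal(\Ucal)^\times$, and then rescale $\phi$ by that constant (using the unit section $i_e$ to normalize) to kill the discrepancy.

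The key step is to compare the two sides of \eqref{comp}, namely $p_{23}^*\phi\circ(\id_\tdop\times\sigma)^*\phi$ and $(\mu\times\id_\Ycal)^*\phi$. Both are isomorphisms between the same two invertible sheaves on $\tdop\times_{K^\circ}\tdop\times_{K^\circ}\Ucal$ — one checks using associativity of the action $\sigma\circ(\mu\times\id)=\sigma\circ(\id\times\sigma)$ (up to reindexing projections) that both sides are isomorphisms $(\sigma\circ\cdots)^*\Ocal(D)\to(p_3\circ\cdots)^*\Ocal(D)$. Hence their ``ratio'' is an element of $\Ocal(\tdop\times_{K^\circ}\tdop\times_{K^\circ}\Ucal)^\times$. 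Here is where Lemma \ref{Rosenlicht} (Rosenlicht's theorem) enters: it shows that any such unit is a character on $T\times T$ times a unit pulled back from $\Ocal(\Ucal)^\times$. Restricting along the unit section $e\times e$ in the torus factors shows the character part is trivial and the remaining unit $g\in\Ocal(\Ucal)^\times$ is in fact the one obstructing normalization at $e$; replacing $\phi$ by $\phi$ composed with multiplication by the appropriate pullback of $g^{-1}$ (equivalently, normalizing $\phi$ so that $i_e^*\phi=\id_{\Ocal(D)}$) forces the ratio to be $1$, i.e.\ the cocycle condition holds.

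The main obstacle I expect is the bookkeeping of the various projections and pullbacks on the triple product $\tdop\times_{K^\circ}\tdop\times_{K^\circ}\Ucal$ — one must carefully identify the source and target sheaves of $p_{23}^*\phi\circ(\id_\tdop\times\sigma)^*\phi$ and of $(\mu\times\id_\Ycal)^*\phi$ using the associativity of the action, so that their comparison genuinely lives in a group of units of the structure sheaf. A secondary subtlety is that $\Ucal$ need not be affine, so ``constant'' means an element of $\Ocal(\Ucal)^\times$ rather than of $\kcirc$; but Rosenlicht's lemma is precisely tailored to this, producing a character times such a unit, and the character is eliminated by restricting to the identity of the torus. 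Once the ratio is pinned down to lie in $\Ocal(\Ucal)^\times$ and is trivialized at $e$, standard arguments (again following \cite{mumford94}) show it equals $1$ identically, completing the verification of \eqref{comp} and hence producing the desired $\tdop$-linearization of $\Ocal(D)$.
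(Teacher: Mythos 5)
Your proposal follows essentially the same route as the paper: obtain $\phi$ from Proposition \ref{Cartier divisor and multiplication}, observe that the two sides of \eqref{comp} differ by a unit on $\tdop\times_{K^\circ}\tdop\times_{K^\circ}\Ucal$, decompose that unit via Lemma \ref{Rosenlicht} as $\chi_1(t_1)\chi_2(t_2)g(u)$, and normalize $\phi$ at the identity to force it to equal $1$. One small correction: restricting at $e\times e$ only yields $g=1$ (the characters evaluate to $1$ there), so to eliminate $\chi_1$ and $\chi_2$ you must use the two partial normalizations $f(e,\cdot,\cdot)=1$ and $f(\cdot,e,\cdot)=1$ separately, together with density of the $\overline{K}$-points, exactly as the paper does.
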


\proof  By Proposition \ref{Cartier divisor and multiplication}, we have an isomorphism
\[ \phi: \sigma^*\Lcal\ra p_2^*\Lcal\] 
for the invertible sheaf $\Lcal := \ms{O}(D)$. Both sides of \eqref{comp} are isomorphisms between the same invertible sheaves on $\tdop \times_{K^\circ} \tdop \times_{K^\circ} \Ucal$ and hence there is a unique $f \in \Ocal(\tdop \times_{K^\circ} \tdop \times_{K^\circ} \Ucal)^\times$ such that that the left hand side is obtained by multiplying the right hand side with  $f$. 
We may choose  $\phi$ such that we have  the canonical isomorphism over $\{e\} \times \Ucal$ and hence we get  
$f(e,\cdot, \cdot)=1$ and $
f(\cdot,e,\cdot)=1$. By Lemma \ref{Rosenlicht}, there are characters $\chi_1,\chi_2$ on $T$ and $g \in \Ocal(\Ucal)^\times$ such that
$f(t_1,t_2,u)=\chi_1(t_1) \chi_2(t_2) g(u)$ for all $t_1,t_2 \in T(\overline{K})$ and $u \in \Ucal(\overline{K})$. 
Since $f(e,e,u)=1$, we get $g=1$. Therefore \[ f(t_1,t_2,u)=\chi_1(t_1)\chi_2(t_2)=f(t_1,e,u)f(e,t_2,u)   =1.\]                                     
By density of the $\overline{K}$-rational points, we get $f=1$ and  \eqref{comp} holds. \qed

\begin{art} \rm \label{action on global section}
The $\tdop$-linearization on $\ms{L}=\ms{O}(D)$ induces a dual action of $\tdop$ on the space $H^0(\ms{U},\mc{L})$ of global sections, given by 
the composition $\hat{\sigma}$ of the canonical $\kcirc$-linear maps
\[ H^0(\ms{U},\ms{L})\ra H^0(\mb{T}\times_{K^\circ} \ms{U},\sigma^*\ms{L})\ra H^0(\mb{T}\times_{K^\circ} \ms{U},p_2^*\ms{L})\ra H^0(\mb{T},\ms{O}_{\mb{T}})\otimes_{K^\circ} H^0(\ms{U},\ms{L}), \]
where the last isomorphism comes from the K\"unneth formula (see \cite{kempf}). We refer to \cite[Chapter 1, Definition 1.2]{mumford94} for the definition of a dual action. This was written for vector spaces over a base field, but the same definition applies in case of a free $\kcirc$-module. Since $V:=H^0(\ms{U},\ms{L})$ is a torsion free $\kcirc$-module, $V$ is indeed free (see \cite[Lemma 4.2]{gubler12} {for a proof}). A dual action means that the torus $\tdop$ acts linearly on the possibly infinite dimensional projective space $\pdop(V)={\rm Proj}(\kcirc[V])$. 

The dual action $\hat{\sigma}$ induces an action of  $t \in T^\circ(K)$ on $V$ which we denote by $s \mapsto t \cdot s$. For $s \in V=H^0(\ms{U},\ms{L})$, the action is geometrically given by $(t \cdot s)(u) = t^{-1}(s(tu))$, $u \in \Ucal$, where $t^{-1}$ operates on the underlying line bundle using the linearization. 
\end{art}

\begin{lemma}\label{sections}
Let $x_1,\ldots, x_k$ be affine coordinates of $\ms{U}_0$ considered as rational functions on $\Ucal$. Then there exists $\ell\in \mb{Z}_+$ such that for every $i \in \{1, \dots, k\}$, the meromorphic section $s_i:= x_i s_{\ell D}$ of $\ms{O}(\ell D)$ is in fact a global section. {Here, $s_{\ell D}$ denotes the canonical global section of $\ms{O}(\ell D)$.}
\end{lemma}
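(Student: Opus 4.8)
The plan is to recall that $\Ucal_0$ is affine with coordinate ring generated over $\kcirc$ by the restrictions of $x_1, \dots, x_k$, and to interpret each $x_i$ as a rational function on $\Ucal$ which is regular on the open dense subset $\Ucal_0$. The key observation is that the Cartier divisor $D$ constructed in Proposition \ref{Cartier divisor to Dcal} satisfies $\supp(D) = \Ucal \setminus \Ucal_0$, with $\cyc(D)$ a horizontal cycle all of whose multiplicities equal $1$. Hence the locus of poles of each $x_i$ on $\Ucal$ is contained in $\Ucal \setminus \Ucal_0$, which is exactly the support of $D$. The goal is therefore to bound the order of the pole of each $x_i$ along each prime component of $\supp(D)$ by some common integer $\ell$, so that $x_i s_{\ell D}$, the product of $x_i$ with the $\ell$-th power of the canonical section $s_D$ cutting out $D$, extends to a genuine global section of $\Ocal(\ell D)$.

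First I would reduce to a local computation. Let $W_1, \dots, W_r$ be the (finitely many, by Proposition \ref{noetherian topological space}) irreducible components of $\Ucal \setminus \Ucal_0$; by \ref{choice of Dcal} each is horizontal of codimension $1$, and $\Dcal = \cyc(D) = \sum_{j} W_j$. For a fixed $j$, let $y_j$ be the generic point of $W_j$; since $\Ucal$ is normal and $y_j$ is a dense point of a divisor of the generic fibre, Proposition \ref{intersection of local rings}(b) tells us that $\Ocal_{\Ucal, y_j}$ is a discrete valuation ring, say with valuation $\ord_{W_j}$. Because $\cyc(\Div(s_D)) = D = \sum_j W_j$ has multiplicity $1$ along each $W_j$, the section $s_D$ is a uniformizer of $\Ocal_{\Ucal, y_j}$, i.e. $\ord_{W_j}(s_D) = 1$ (here I use Proposition \ref{order for normal} / Corollary \ref{equality of supports} to identify the multiplicity with the valuation of a local equation). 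Now each $x_i$, being regular on $\Ucal_0$, has no poles outside $\bigcup_j W_j$, so its only possible poles on $\Ucal$ occur along the $W_j$; set $\ell_{i,j} := \max(0, -\ord_{W_j}(x_i))$ and $\ell := \max_{i,j} \ell_{i,j}$, which is a finite non-negative integer since there are finitely many pairs $(i,j)$. Then $\ord_{W_j}(x_i s_D^{\ell}) = \ord_{W_j}(x_i) + \ell \geq 0$ for every $i, j$, so $s_i := x_i s_{\ell D}$ has no pole along any $W_j$.

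It remains to conclude that $s_i$ is a global section of $\Ocal(\ell D)$ on all of $\Ucal$, not merely away from the singular locus of the $W_j$. Here I would use that $\Ucal$ is normal together with the fact that $\Ocal(\ell D)$ is a reflexive (indeed invertible) sheaf: a rational section of an invertible sheaf on a normal variety which is regular in codimension $1$ is regular everywhere. Concretely, on $\Ucal_0$ the function $x_i$ is regular and $s_{\ell D}$ trivializes $\Ocal(\ell D)$, so $s_i$ is a section there; on a neighbourhood of each $y_j$ the computation above shows $s_i$ extends; and the union $\Ucal_0 \cup \bigcup_j \Spec \Ocal_{\Ucal, y_j}$ exhausts $\Ucal$ in codimension $1$, so by the algebraic Hartogs property for the normal variety $\Ucal$ (cf. Proposition \ref{intersection of local rings}(d) applied on affine opens) the section $s_i$ extends uniquely to $H^0(\Ucal, \Ocal(\ell D))$. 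The main obstacle, and the only slightly delicate point, is the passage from ``regular in codimension $1$'' to ``globally regular'': one must make sure the Hartogs-type extension is available for the possibly non-noetherian $\kcirc$, which is precisely what the coherence and Pr\"ufer $v$-multiplication results of Knaf recalled in Proposition \ref{intersection of local rings} provide, since the relevant local rings at codimension-$1$ points (discrete valuation rings on the generic fibre, rank-one valuation rings at special-fibre generic points) are exactly the ones whose intersection recovers $\Ocal(\Ucal)$ on affines.
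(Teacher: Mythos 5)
Your proof is correct and follows essentially the same route as the paper: decompose $\cyc(\Div(x_i))$ into the part supported on the components of $\Ucal\setminus\Ucal_0$ plus an effective remainder, choose $\ell$ to cancel the negative multiplicities, and conclude via Corollary \ref{equality of supports} that the effective Weil divisor $\cyc(\Div(x_i))+\ell\,\cyc(D)$ comes from an effective Cartier divisor, so $x_i s_{\ell D}$ is a global section. The only point worth making explicit is that the vertical components of $\cyc(\Div(x_i))$ are automatically effective because, by \ref{choice of Dcal}, the generic points of $\Ucal_s$ already lie in $\Ucal_0$, where $x_i$ is regular.
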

\begin{proof}
Using the theory of divisors from \S \ref{divisors on varieties}, we get the identity
\[\cyc(\Div(x_i))=\sum_jm_{ij}Z_j+\ms{V}\]
of cycles on $\Ucal$, where $Z_j$ are the irreducible components of $\ms{U}\backslash \ms{U}_0$ and where $\ms{V}$ is an effective cycle of codimension $1$ in $\Ucal$ which meets $\ms{U}_0$. 
By construction (see Proposition \ref{Cartier divisor to Dcal}), we have  $\cyc(D)=\ms{D}= \sum_j Z_j$. For $\ell :=-\min_{ij}\{ m_{ij},0\}$, we get 
\[ \cyc(\Div(x_i))+\ell\ms{D}=\sum_j m_{ij} Z_j +\ell\ms{D}+\ms{V} = \sum_j(m_{ij}+\ell)Z_j+\ms{V}\geq 0 .\]
Therefore the Weil divisor $\Div(x_i)+ \ell D$ is effective. By Corollary \ref{equality of supports}, we conclude that $x_i s_{\ell D}$ is 
a global section of $\ms{O}(\ell D)$. 
\end{proof}

\begin{art} \rm \label{construction of an immersion}
By Proposition \ref{noetherian topological space}, $\ms{Y}$ is a noetherian topological space and therefore  $\ms{U}$ is quasicompact. Using Lemma \ref{T-invariance of U}, there is a finite subset $S$ of  $T^\circ (K)$ such that $\ms{U}=\bigcup_{t \in S} t^{-1} \ms{U}_0$. We have seen in Lemma \ref{sections} that the affine coordinates  $x_1,\ldots ,x_k$ of $\Ucal_0$ induce global sections $s_1, \dots ,s_k$ of $\ms{O}(\ell D)$. Then the dual action from \ref{action on global section} gives global sections $t\cdot s_1,\ldots ,t\cdot s_k$  of $\ms{O}(\ell D)$ induced by affine coordinates of $t^{-1} \Ucal_0$. We conclude that $(t \cdot s_j)_{t \in S, j=1,\dots, k}$ generate $\ms{O}(\ell D)$. By construction, the global section $t \cdot s_D$ has support $\Ucal \setminus t^{-1} \Ucal_0$.  We get a morphism 
\[ \psi: \Ucal \rightarrow \pdop^{R'}_\kcirc, u\mapsto (\cdots : t\cdot s_1(u):\cdots : t\cdot s_k(u):t\cdot s_D^\ell(u):\cdots)_{ t\in S}\] 
with $R':=|S|(k+1)-1$. Note that this map is well defined because $\ms{O}(\ell D)$ is generated by these global sections, and we have $\psi^*(\ms{O}_{\pdop^{R'}_\kcirc}(1))\simeq \ms{O}(\ell D)$. 
\end{art}

\begin{prop} \label{ampleness}
The morphism $\psi$ is an immersion and hence $\Lcal$ is ample.
\end{prop}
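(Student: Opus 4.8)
The plan is to show $\psi$ is an immersion by checking it on the affine open cover $\{t^{-1}\Ucal_0 \mid t \in S\}$ of $\Ucal$, and then invoke that an invertible sheaf admitting an immersion into projective space over $\kcirc$ is ample. The key point is that on each piece $t^{-1}\Ucal_0$, the global section $t\cdot s_D^\ell$ is nowhere vanishing (its support is $\Ucal \setminus t^{-1}\Ucal_0$), so the ratios $(t\cdot s_i)/(t\cdot s_D^\ell) = x_i^{(t)}$ are exactly the affine coordinates of $t^{-1}\Ucal_0$ pulled back from the translate. Hence on $t^{-1}\Ucal_0$ the map $\psi$ factors through the standard affine chart of $\pdop^{R'}_\kcirc$ where the coordinate corresponding to $t\cdot s_D^\ell$ is nonzero, and in that chart $\psi$ is given (among its components) by $u \mapsto (x_1^{(t)}(u), \dots, x_k^{(t)}(u))$, which is a closed immersion of the affine variety $t^{-1}\Ucal_0$ into affine space because the $x_i^{(t)}$ generate $\Ocal(t^{-1}\Ucal_0)$ as a $\kcirc$-algebra.

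First I would make precise that $\Ucal = \bigcup_{t\in S} t^{-1}\Ucal_0$ with $S$ finite (already done in \ref{construction of an immersion} via Lemma \ref{T-invariance of U} and quasicompactness), and that $\psi^*(\Ocal_{\pdop^{R'}_\kcirc}(1)) \simeq \Ocal(\ell D)$ with the listed sections as the pullbacks of the homogeneous coordinates. Then, fixing $t \in S$, I would restrict attention to the open subscheme $U_t \subset \pdop^{R'}_\kcirc$ where the coordinate attached to $t\cdot s_D^\ell$ is invertible; since $t\cdot s_D^\ell$ is a global section of $\Ocal(\ell D)$ with vanishing locus $\Ucal \setminus t^{-1}\Ucal_0$, we have $\psi^{-1}(U_t) = t^{-1}\Ucal_0$. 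On this chart $U_t \cong \adop^{R'}_\kcirc$, the composite $t^{-1}\Ucal_0 \to U_t$ is, on rings, the $\kcirc$-algebra map $\kcirc[Z_1,\dots,Z_{R'}] \to \Ocal(t^{-1}\Ucal_0)$ sending the coordinates coming from the block $t$ to $x_1^{(t)},\dots,x_k^{(t)}$ (and $1$ from the $s_D$-slot). Since $x_1^{(t)},\dots,x_k^{(t)}$ are by definition affine coordinates generating $\Ocal(t^{-1}\Ucal_0)$ over $\kcirc$, this algebra map is surjective, so $t^{-1}\Ucal_0 \to U_t$ is a closed immersion. Therefore $\psi$ restricted to $\psi^{-1}(U_t)$ is a closed immersion for each $t$, and since the $U_t$ ($t \in S$) cover $\psi(\Ucal)$, the morphism $\psi$ is an immersion.

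Finally, an immersion $\Ucal \hookrightarrow \pdop^{R'}_\kcirc$ pulls back $\Ocal(1)$, which is very ample on $\pdop^{R'}_\kcirc$, to an ample invertible sheaf on $\Ucal$ — more precisely, $\Ocal(\ell D) = \psi^*\Ocal_{\pdop^{R'}_\kcirc}(1)$ is ample because $\psi$ is an immersion and restriction of an ample sheaf along an immersion stays ample — and hence $\Lcal = \Ocal(D)$ is ample as well (a positive power of $\Lcal$ being ample forces $\Lcal$ ample). The step I expect to require the most care is verifying that the $U_t$ genuinely cover the image $\psi(\Ucal)$, i.e.\ that for every point $u \in \Ucal$ there is some $t \in S$ with $(t\cdot s_D^\ell)(u) \neq 0$; this is exactly the statement that $u \in t^{-1}\Ucal_0$ for some $t$, which holds since the translates cover $\Ucal$ — and one must also be slightly careful that these statements hold scheme-theoretically over $\kcirc$, including on the special fibre, but this is guaranteed because $t\cdot s_D^\ell$ has support precisely $\Ucal \setminus t^{-1}\Ucal_0$ as a closed subscheme.
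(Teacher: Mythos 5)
Your proposal is correct and follows essentially the same route as the paper: identify $\psi^{-1}$ of the chart where the coordinate attached to $t\cdot s_D^\ell$ is invertible with $t^{-1}\Ucal_0$, observe that the affine coordinates $x_i^{(t)}$ give a surjection on rings and hence a closed immersion there, and glue over the finite cover (the paper cites \cite[Corollaire 4.2.4]{EGAI} for this last gluing step). The concluding deduction of ampleness of $\Lcal$ from $\Ocal(\ell D)=\psi^*\Ocal(1)$ is also the paper's argument.
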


\proof For $t \in S$, the support of the Cartier divisor $\Div(t \cdot s_D)=D^t$ is equal to $\Ucal \setminus t^{-1}\Ucal_0$. Let $y_j$ be the coordinate of $\pdop^{R'}_\kcirc$ corresponding to $t \cdot s_D^\ell$ with respect to the morphism $\psi$. Then we get $\psi^{-1}\{y_j \neq 0\} = t^{-1} \Ucal_0$. Since $t\cdot x_1, \dots , t \cdot x_k$ are affine coordinates on $t^{-1} \Ucal_0$, we conclude easily that $\psi$ restricts to a closed immersion of $t^{-1}\Ucal_0$ into the open subvariety $\{y_j \neq 0\}$ of $\pdop^{R'}_\kcirc$. Since these open subvarieties form an open covering of  $\pdop^{R'}_\kcirc$, we may use  \cite[Corollaire 4.2.4]{EGAI} to conclude that the morphism $\psi$ is an immersion and hence $\ms{O}(D)$ is ample.\qed

\begin{art} \rm \label{construction of an invariant immersion}
Let $V_{\ell0}$ be the submodule of $V_\ell:= H^0(\Ucal, \Lcal^\ell)$ which is generated by the global sections $(t\cdot s_j)_{t \in S, j=1,\dots, k}$ and $(t \cdot s_D^\ell)_{t \in S}$ used in the definition of $\psi$ in \ref{construction of an immersion}. Since $\ms{O}(\ell D)$ has a $\tdop$-linearization, we get a dual action $\hat{\sigma}$ of $\tdop$ on $V_\ell$ similarly {to} \ref{action on global section}. 

A $\kcirc$-submodule $W$ of $V_\ell$ is called {\it invariant} under the dual action of $\tdop$ if $\hat{\sigma}(W) \subset  A \otimes_{K^\circ} W$ for $A:=H^0(\mb{T},\ms{O}_{\mb{T}})= \kcirc[M]$. The lemma on p. 25 of \cite{mumford94} generalizes {in a straightforward manner} to our setting and hence there is a finitely generated submodule $W$ of $V_\ell$ which is invariant under the dual action of $\tdop$ and contains $V_{\ell0}$. Since $W$ is $\kcirc$-torsion free, we conclude that $W$ is a free $\kcirc$-module of finite rank $R+1$. 

We get a morphism $i: \Ucal \rightarrow \pdop(W)$ with $i^*(\ms{O}_{\pdop(W)}(1))\cong \ms{O}(\ell D)$. The dual action of $\tdop$ on $W$ induces a linear action of $\tdop$ on the projective space $\pdop(W)$. By construction, $i$ is $\tdop$-equivariant. Since $i$ factorizes through  $\psi$, we deduce from Proposition \ref{ampleness} that $i$ is an immersion. 
\end{art}

Recall that $\tdop = \Spec(\kcirc[M])$ is the split torus of rank $n$. We summarize our findings:

\begin{prop} \label{the A-structure}
Let $\Ucal_0$ be a non-empty affine open subset of the normal $\tdop$-toric variety $\Ycal$ over $\kcirc$ and let $\Ucal$ be the smallest $\tdop$-invariant open subset of $\Ycal$ containing $\Ucal_0$. Then there is a $\tdop$-equivariant open immersion of $\Ucal$ into a projective $\tdop$-toric variety $\Ycal_{A,a}$ given by $A \in M^{R+1}$ and height function $a$ as in \ref{projective toric schemes}.
\end{prop}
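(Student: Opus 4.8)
The plan is to assemble the constructions already carried out in this section. By \ref{construction of an invariant immersion}, there is a free $\kcirc$-module $W$ of finite rank $R+1$ equipped with a dual action of $\tdop$, together with a $\tdop$-equivariant immersion $i\colon\Ucal\to\pdop(W)$ such that $i^*(\Ocal_{\pdop(W)}(1))\cong\Ocal(\ell D)$ and such that $\tdop$ acts on $\pdop(W)$ through the induced linear action. Since $\pdop(W)\cong\pdop^R_\kcirc$ is projective over $\kcirc$, it remains only to recognize this projective space, with its linear $\tdop$-action, as the one appearing in \ref{projective toric schemes}, and then to identify the closure of $i(\Ucal)$ with a suitable $\Ycal_{A,a}$.

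First I would diagonalize the action. As $\tdop=\Spec(\kcirc[M])$ is a split torus, the dual action makes $W$ a comodule over the group ring $\kcirc[M]$, equivalently an $M$-graded $\kcirc$-module $W=\bigoplus_{u\in M}W_u$. Each $W_u$ is a direct summand of the finite free module $W$, hence finitely generated and torsion free over the valuation ring $\kcirc$, hence free; so one may choose a homogeneous $\kcirc$-basis $w_0\in W_{u_0},\dots,w_R\in W_{u_R}$ of $W$. This identifies $\pdop(W)$ with $\pdop^R_\kcirc$ in such a way that $\tdop$ acts by $(t,{\bf x})\mapsto(\chi^{u_0}(t)x_0:\cdots:\chi^{u_R}(t)x_R)$ exactly as in \ref{projective toric schemes}, and I set $A:=(u_0,\dots,u_R)\in M^{R+1}$.

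Next I would choose the base point. Since $\Ucal_\eta$ is a non-empty $T$-invariant open subscheme of the toric variety $\Ycal_\eta$, it contains the dense torus $T$, so the identity $e\in T$ lies in $\Ucal$; put ${\bf y}:=i(e)$, which is a $K$-rational point of $\pdop^R_\kcirc$, and let $a$ be its height function. Using $\tdop$-equivariance of $i$ and the fact that the $T$-action on $\Ucal_\eta$ restricts to the group law on $T$, one obtains $i(t)=t\cdot{\bf y}$ for $t\in T$, so that $i(T)=T{\bf y}$ is the $T$-orbit of ${\bf y}$. As $i$ is an immersion, hence injective, the stabilizer $\Stab({\bf y})$ is trivial; therefore $T/\Stab({\bf y})=T$ and, by \cite[9.2]{gubler12}, the closure $\Ycal_{A,a}$ of $T{\bf y}$ in $\pdop^R_\kcirc$ is a projective $\tdop$-toric variety over $\kcirc$ for the full rank-$n$ torus $\tdop$. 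Finally, $T$ is dense in $\Ycal$ --- being dense in $\Ycal_\eta$, which is dense in $\Ycal$ by flatness over $\kcirc$ --- hence dense in the open subset $\Ucal$, so $i(T)=T{\bf y}$ is dense in the integral scheme $i(\Ucal)$; consequently the closure of $i(\Ucal)$ equals $\overline{T{\bf y}}=\Ycal_{A,a}$. Being the image of an immersion, $i(\Ucal)$ is locally closed in $\pdop^R_\kcirc$, hence open in its closure $\Ycal_{A,a}$, and so $i$ is a $\tdop$-equivariant open immersion of $\Ucal$ into $\Ycal_{A,a}$ which, after the identification of the dense torus $T{\bf y}$ of $\Ycal_{A,a}$ with $T$ via $t\mapsto t{\bf y}$, restricts to the identity on $T$.

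I do not expect a genuine obstacle, as the content is essentially \ref{construction of an invariant immersion} together with the properties of the varieties $\Ycal_{A,a}$ recalled from \cite{gubler12}. The only mildly delicate points are the weight-space decomposition $W=\bigoplus_u W_u$ with the freeness of each $W_u$ over the valuation ring (which is what makes a diagonalizing basis available) and the triviality of $\Stab({\bf y})$, which is needed to ensure that $\Ycal_{A,a}$ is a toric variety for $\tdop$ itself rather than for a proper quotient; both are settled in a line.
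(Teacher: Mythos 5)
Your proposal is correct and follows essentially the same route as the paper: take the $\tdop$-equivariant immersion $i\colon\Ucal\to\pdop(W)$ from \ref{construction of an invariant immersion}, pass to the closure of $i(\Ucal)$, and identify it with a $\Ycal_{A,a}$ via a rational point of the dense orbit. The only difference is that where the paper simply invokes \cite[Proposition 9.8]{gubler12} to produce the diagonalizing coordinates and the tuple $A$, you carry out that step by hand (the $M$-grading $W=\bigoplus_u W_u$ with each $W_u$ free over $\kcirc$, and the triviality of $\Stab(\mathbf y)$); both of these points are handled correctly.
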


\proof Let $i: \Ucal \rightarrow \pdop(W)$  be the $\tdop$-equivariant immersion from \ref{construction of an invariant immersion}. Then the closure  $\Ycal$  of $i(\Ucal)$ in $\pdop(W)$ is a projective $\tdop$-toric variety over $\kcirc$ on which $\tdop$-acts linearly. We choose a $K$-rational point $\yb$ in the open dense orbit of $i(\Ucal)$. By \cite[Proposition 9.8]{gubler12}, there are suitable coordinates on $\pdop(W)$ and $A \in M^{R+1}$ such that $\Ycal = \Ycal_{A,a}$ for the height function $a$ of $\yb$ defined in \ref{projective toric schemes}. \qed

\section{Proof of Sumihiro's theorem} \label{Proof of Sumihiro}

{In this section, we will finally prove Sumihiro's theorem for normal toric varieties over $\kcirc$ as announced in Theorem \ref{sumihiro1}.}
Sumihiro's theorem is wrong for arbitrary non-normal toric varieties even over a field (see \cite[Example 3.A.1]{cox}  for a projective counterexample). However, we will show {first} in this section that Sumihiro's theorem holds for open invariant subsets of projective toric varieties over $\kcirc$ with a {\it linear} torus action. {Note that such projective toric varieties are not necessarily normal. As a consequence of Proposition \ref{the A-structure}, we  will obtain Sumihiro's theorem for normal toric varieties over $\kcirc$. At the end, Theorem \ref{sumihiro2} will easily follow from Sumihiro's theorem similarly as in the classical case of normal toric varieties over a field.} 

In this section, $(K,v)$ will be a valued field with value group $\Gamma = v(K^\times) \subset \rdop$. Moreover, $\tdop=\Spec(\kcirc[M])$ is a split torus over the valuation ring $\kcirc$ of rank $n$ and we consider a projective $\tdop$-toric variety over $\kcirc$ with a linear $\tdop$-action. By \cite[Proposition 9.8]{gubler12} the latter is a toric subvariety $\Ycal_{A,a}$ of $\pdop_\kcirc^{R}$ as in \ref{projective toric schemes}  for $A \in M^{R+1}$, height function $a$ and suitable projective coordinates $x_0, \dots , x_R$.


\begin{art} \rm \label{setting for quasiprojective Sumihiro}
We fix a point $z \in \Ycal_{A,a}$ and a closed $\tdop$-invariant subset $Y$ of $\Ycal_{A,a}$ with $z \not \in Y$. Since $Y$ is a closed subset of the ambient projective space $\pdop_\kcirc^{R}$, there is a $k \in \zdop_+$ and $s_0 \in H^0(\pdop_\kcirc^{R}, \Ocal(k))$ such that $s_0|_Y=0$ and $s_0(z)\neq 0$.

Obviously, $V:= H^0(\pdop_\kcirc^{R}, \Ocal(k))$ is a free $\kcirc$-module of finite rank. The linear $\tdop$-action on  $\pdop_\kcirc^{R}$ induces a linear representation of $\tdop$ on $V$, i.e. a homomorphism $S: \tdop \rightarrow GL(V)$ of group schemes over $\kcirc$. We say that $s \in V$ is {\it semi-invariant} if there is $u \in M$ such that $S_t(s)= \chi^u(t)s$ for every $t \in \tdop$ and for the character $\chi^u$ of $\tdop$ associated to $u$. In  the following, the $\kcirc$-submodule 
$$W:=\{s\in H^0(\mb{P}^R_{K^\circ}, \ms{O}(k))\mid \exists \lambda \in K^\circ \backslash \{0\} \te{ s. t. } \lambda s|_Y=0 \}$$
of $V$ will be of interest. Note that $W$ is equal to the set of global sections $s$ of $\ms{O}(k)$ which vanish on the generic fiber $Y_{\eta}$. 
Since $Y$ is $\tdop$-invariant, it is clear that $W$ is invariant under the $\tdop$-action. 
{The multiplicative torus $T=\tdop_K$ is split over $K$ and hence the vector space $W_K$ has a simultaneous eigenbasis  for the $T$-action. This is well-known in the theory of toric varieties over a field and follows from \cite[Proposition  III.8.2]{Bor}. Note that this $K$-basis is semi-invariant. We will show in the next lemma that such a basis exists as a basis of $W$ over $\kcirc$. } 
\end{art}

\begin{lemma}\label{sec}
$W$ is a free $K^\circ$-module of finite rank which has a semi-invariant basis.
\end{lemma}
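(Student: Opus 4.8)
The plan is to reduce everything to a statement about representations of the split torus $\tdop$ on a free module of finite rank, mimicking the classical argument over a field (as in \cite[Chapter 1]{mumford94}). First I would establish that $W$ is a free $\kcirc$-module of finite rank. Since $W$ is the set of global sections of $\Ocal(k)$ vanishing on the generic fibre $Y_\eta$, it is a $\kcirc$-submodule of the free finite-rank module $V = H^0(\pdop^R_\kcirc, \Ocal(k))$; as $\kcirc$ is a valuation ring, $W$ is torsion-free, and being a submodule of a finite free module over a valuation ring it is itself free of finite rank (this is the same flatness/coherence phenomenon invoked via \cite[Lemma 4.2]{gubler12} elsewhere in the paper, together with the fact that $W$ is finitely generated because $V$ is and $\kcirc$-submodules of finite free modules over a Pr\"ufer domain are finitely generated — alternatively $W = V \cap (W \otimes_\kcirc K)$ is the intersection of $V$ with a $K$-subspace, which is finitely generated since $V$ is).

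Next I would produce the semi-invariant basis. The linear $\tdop$-action on $\pdop^R_\kcirc$ gives a linear representation $S: \tdop \to GL(V)$, i.e. $V$ becomes an $M$-graded $\kcirc$-module $V = \bigoplus_{u \in M} V_u$, where $V_u$ is the $\chi^u$-isotypic part consisting of semi-invariant sections of weight $u$; this grading comes directly from the coaction $V \to \kcirc[M] \otimes_\kcirc V$. Because $Y$ is $\tdop$-invariant, the subspace $W$ is a graded submodule, so $W = \bigoplus_{u \in M} (W \cap V_u)$. Each graded piece $W_u := W \cap V_u$ is again a torsion-free finitely generated $\kcirc$-module, hence free of finite rank, and only finitely many $W_u$ are nonzero since $W$ has finite rank. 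Choosing a $\kcirc$-basis of each $W_u$ and taking the union gives a basis of $W$ consisting of semi-invariant elements, which is exactly the assertion.

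The main obstacle I anticipate is the decomposition $W = \bigoplus_u W_u$, i.e. showing that the linear representation of $\tdop = \Spec(\kcirc[M])$ on the free $\kcirc$-module $V$ is diagonalizable — that a comodule over the group algebra $\kcirc[M]$ (a diagonalizable group scheme) decomposes as a direct sum of weight spaces, even in the non-noetherian setting. Over a field this is standard; over $\kcirc$ one needs that $\kcirc[M]$ is free as a $\kcirc$-module with the obvious basis $\{\chi^u\}_{u \in M}$ and that the comodule axioms force the weight-space decomposition; this is essentially formal and works over any base ring, so the issue is more one of citing or spelling out the right reference than of genuine difficulty. A secondary technical point is verifying that $V$ being free of finite rank forces each $V_u$ (and hence each $W_u$) to be free of finite rank and that only finitely many are nonzero, which follows since the $V_u$ are $\kcirc$-direct summands of $V$. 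Once the grading is in hand, intersecting with $W$ and assembling bases is routine.
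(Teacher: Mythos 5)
Your proof is correct, but it reaches the semi-invariant basis by a genuinely different route than the paper. The paper stays within linear algebra over the two residue/fraction fields: it picks a simultaneous eigenbasis of $W_K$ for the split torus over $K$, arranged compatibly with the free modules $W_{u_j}=E_{u_j}\cap V$; it then observes that for $t\in\tdop(\kcirc)$ the eigenvalues $\chi^{u_j}(t)$ are units, so the action reduces to a $\tdop_{\ktilde}$-action on $W\otimes_{\kcirc}\ktilde$, that the reduced eigenvectors remain linearly independent (eigenvectors for distinct characters), and concludes by Nakayama that the chosen eigenvectors form a $\kcirc$-basis. You instead invoke the weight-space decomposition $V=\bigoplus_u V_u$ of a comodule over the diagonalizable group scheme $\tdop=\Spec(\kcirc[M])$ and intersect with $W$. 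That decomposition is indeed formal over any base ring (the coaction defines orthogonal idempotent projections $\pi_u$ summing to the identity), so your main anticipated obstacle is not a real one; your route is shorter and more conceptual once that fact is granted, while the paper's avoids any input on group schemes over the non-noetherian ring $\kcirc$ at the cost of the reduction-plus-Nakayama step. Both routes rest on the same freeness facts for saturated submodules.

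Two points need tightening. First, your parenthetical claim that $\kcirc$-submodules of finite free modules over a Pr\"ufer domain are finitely generated is false: the maximal ideal $K^{\circ\circ}$ of a non-discrete rank-one valuation ring is a non-finitely-generated submodule of the free module $\kcirc$. Your alternative argument is the correct one and is what the paper uses (citing Bourbaki): $W=V\cap W_K$ is saturated, so $V/W$ is a finitely generated torsion-free, hence free, $\kcirc$-module, the inclusion $W\subset V$ splits, and $W$ is a finitely generated direct summand, hence free; the same applies to each $W_u=W\cap V_u$ inside $V_u$. Second, the assertion that $W$ is a graded submodule deserves a line of justification: one must check that the weight components of a section vanishing on $Y_\eta$ again vanish on $Y_\eta$, which follows from the $T$-invariance of $Y_\eta$ (so $W_K$ is the sum of its weight spaces over the field $K$) together with $W=W_K\cap V$ and $V_u\subset V$.
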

\begin{proof}
A valuation ring is a Pr\"ufer domain. Since $W$ is a saturated $\kcirc$-submodule of the free module $V$ of finite rank, we conclude that $W$ is free of finite rank $r$ (see \cite[ch. VI, \S 4, Exercise 16]{Bou}).  {We have seen above that $W_K$ has a simultaneous eigenbasis $w_1, \dots ,w_r$ for the $T$-action. For $j=1, \dots, r$, we have $S_t(w_j)=\chi^{u_j}(t)\cdot w_j$} for all $t \in T(K)$ and some $u_j \in M$. Let $E_{u_j}$ be the corresponding eigenspace. Then $W_{u_j}:=E_{u_j} \cap V$ is a saturated $\kcirc$-submodule of $W$. The same argument as above shows that $W_{u_j}$ is a free $\kcirc$-module of finite rank. We may choose the simultaneous eigenbasis $w_1, \dots ,w_r$ above in such a way that a suitable subset is a $\kcirc$-basis  of $W_{u_j}$ for every $j=1, \dots, r$. Note that every $w_j$ is semi-invariant.

For $t$ in the subgroup $U:=\tdop(K^\circ)=T^\circ(K)$ of $T(K)$, we have $S_t \in GL(V,K^\circ)$ and hence the eigenvalues $\chi^{u_j}(t)$ have valuation $0$. Using reduction modulo the maximal ideal $K^{\circ \circ}$ of $K^\circ$,  the $U$-action becomes a $\tdop_{\widetilde{K}}$-operation on $\widetilde{W}:=W \otimes_{K^\circ} \ktilde$. We note that the reduction of a $K^\circ$-basis in $W_{u_j}$  is linearly independent in $\widetilde{W}$. Using that eigenvectors for distinguished eigenvalues are linearly independent, we conclude that the reduction of $w_1, \dots, w_r$ is a a simultaneous eigenbasis for the  $\tdop_{\widetilde{K}}$-action on $\widetilde{W}$. By Nakayama's Lemma, it follows that $w_1, \dots, w_r$  is a $K^\circ$-basis for $W$. 
\end{proof}

\vspace{2mm}
We can now prove the following quasi-projective version of Sumihiro's theorem.

\begin{prop} \label{quasiprojective Sumihiro}
Let $\Ucal$ be a $\tdop$-invariant open subset of $\Ycal_{A,a}$. Then every point of $\Ucal$ has a $\tdop$-invariant open affine neighbourhood in $\Ucal$.
\end{prop}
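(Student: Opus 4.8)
The plan is to reduce the statement to the case of a point $z$ in the special fibre (for a point in the generic fibre the classical Sumihiro theorem over the field $K$ applies to $\Ycal_{A,a,\eta}$, or one can argue uniformly), and then to produce, from the setup in \ref{setting for quasiprojective Sumihiro}, a semi-invariant global section cutting out an affine invariant neighbourhood. Concretely, fix $z \in \Ucal$ and set $Y := \Ycal_{A,a} \setminus \Ucal$, a closed $\tdop$-invariant subset with $z \notin Y$. First I would invoke \ref{setting for quasiprojective Sumihiro}: there is $k \in \zdop_+$ and a global section $s_0 \in H^0(\pdop^R_\kcirc, \Ocal(k))$ with $s_0|_Y = 0$ and $s_0(z) \neq 0$; moreover, after possibly scaling, $s_0$ lies in the submodule $W$ of global sections vanishing on $Y_\eta$. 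By Lemma \ref{sec}, $W$ is a free $\kcirc$-module of finite rank with a semi-invariant basis $w_1, \dots, w_r$, say with weights $u_1, \dots, u_r \in M$.

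Write $s_0 = \sum_{j} \lambda_j w_j$ with $\lambda_j \in \kcirc$. Since $s_0(z) \neq 0$, at least one summand is nonzero at $z$; I claim one can choose an index $j_0$ with $w_{j_0}(z) \neq 0$. This needs a small argument: the value $s_0(z)$ lives in a fibre of $\Ocal(k)$, and the $w_j(z)$ that are nonzero have distinct weights under the isotropy/torus action at $z$, so they cannot cancel — concretely, passing to the residue field at $z$ and using that $\tdop_s$ acts on the fibre through a character on each weight space, a nonzero linear combination of eigenvectors with distinct characters is nonzero, hence some $w_{j_0}(z) \neq 0$. Set $s := w_{j_0}$, a semi-invariant section of $\Ocal(k)$ with $s|_Y = 0$ after scaling (it lies in $W$, so $\lambda s|_Y = 0$ for some $\lambda \in \kcirc \setminus \{0\}$; since $Y$ is reduced as a closed subscheme we actually get $s|_Y = 0$, or we simply work with $\{s \neq 0\}$ which only depends on $s$ up to scalar) and $s(z) \neq 0$. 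Let $\Ucal_z := \{s \neq 0\} \cap \Ycal_{A,a}$. Because $s$ is semi-invariant, $\tdop$ acts on the line $\kcirc s$ through the character $\chi^{u_{j_0}}$, so the nonvanishing locus $\{s \neq 0\}$ is $\tdop$-invariant; since $s|_Y$ vanishes (up to scalar), $\Ucal_z \subset \Ucal$; and $z \in \Ucal_z$.

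It remains to check that $\Ucal_z$ is affine. Here I would use that $\Ucal_z$ is the intersection of $\Ycal_{A,a}$ with the basic open set $D_+(s) = \{s \neq 0\} \subset \pdop(V)$, where $V = H^0(\pdop^R_\kcirc, \Ocal(k))$ and $\Ycal_{A,a} \hookrightarrow \pdop^R_\kcirc$ is re-embedded via the $k$-th Veronese into $\pdop(V)$; since $\Ycal_{A,a}$ is projective (hence closed) in $\pdop(V)$, $\Ucal_z = \Ycal_{A,a} \cap D_+(s)$ is a closed subscheme of the affine scheme $D_+(s)$, hence affine. Thus every point of $\Ucal$ has an affine $\tdop$-invariant open neighbourhood in $\Ucal$. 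The main obstacle I anticipate is the non-cancellation argument showing some basis element $w_{j_0}$ itself is nonzero at $z$: one has to argue carefully that the semi-invariant basis vectors appearing in $s_0$ with nonzero value at $z$ have pairwise distinct weights, using the torus action on the one-dimensional fibre $\Ocal(k)_z$ and the (possibly nontrivial) isotropy at $z$; the rest is a routine unwinding of semi-invariance and of projectivity of $\Ycal_{A,a}$ inside $\pdop(V)$.
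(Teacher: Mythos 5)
There is a genuine gap, and it sits exactly at the point your proposal tries to wave away. You correctly produce a semi-invariant section $s=w_{j_0}$ with $s(z)\neq 0$ and $\lambda s|_Y=0$ for some $\lambda\in\kcirc\setminus\{0\}$ (the ``non-cancellation'' worry here is moot: a nonzero sum has a nonzero summand, so some $\lambda_{j}w_{j}(z)\neq 0$ and hence some $w_{j_0}(z)\neq 0$). But the step ``since $Y$ is reduced we actually get $s|_Y=0$, or we simply work with $\{s\neq 0\}$ which only depends on $s$ up to scalar'' is false over a valuation ring. If $v(\lambda)>0$, then $\lambda$ reduces to $0$ in $\ktilde$ and $\lambda s$ vanishes identically on the whole special fibre, so $\lambda s|_Y=0$ only tells you $s|_{Y_\eta}=0$; it gives no information about $s$ on $Y_s$. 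Reducedness of $Y$ does not help, because $\lambda$ is not invertible on $Y_s$. Since $Y=\Ycal_{A,a}\setminus\Ucal$ is merely a closed invariant subset, $Y_s$ can have irreducible components not lying in the closure of $Y_\eta$ (in the extreme case $Y_\eta=\emptyset$ and $W=V$, so membership in $W$ is no constraint at all). Consequently your $\Ucal_z=\{s\neq 0\}$ may meet $Y_s$ and need not be contained in $\Ucal$. This is precisely why the paper splits into two cases: for $z$ in the generic fibre it uses $\{\lambda s_1\neq 0\}$, which is contained in $\Ucal$ and still contains $z$ because $\lambda\neq 0$ in $K$; for $z$ in the special fibre it repairs the section by multiplying $s_1$ with suitable projective coordinates $x_{i(\zeta)}$, one for each generic point $\zeta$ of an irreducible component of $Y_s$, chosen via the orbit--face correspondence of Proposition \ref{projectivetoric} so that $x_{i(\zeta)}$ vanishes on that component but not at $z$. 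The resulting product is semi-invariant, vanishes on all of $Y$, and is nonzero at $z$. Your proposal is missing this entire construction, which is the actual content of the special-fibre case.

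Two minor remarks: your appeal to ``classical Sumihiro over $K$'' for generic-fibre points is not quite available since $\Ycal_{A,a}$ need not be normal (the correct generic-fibre argument is the $\lambda s_1$ trick above), and your affineness argument via the Veronese re-embedding into $\pdop(V)$ is fine and matches what the paper implicitly uses.
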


\proof Let $z \in \Ucal$ and let $Y:=\Ycal \setminus \Ucal$. Since $Y$ is $\tdop$-invariant, we are in the setting of \ref{setting for quasiprojective Sumihiro} and we will use the notation from there. In particular, we have $s_0 \in H^0(\pdop_\kcirc^{R}, \Ocal(k))$ such that $s_0|_Y=0$ and $s_0(z)\neq 0$. Using  Lemma \ref{sec}, we conclude that there is a  semi-invariant  $s_1 
\in H^0(\pdop_\kcirc^{R}, \Ocal(k))$ with  $s_1(z)\neq 0$ and $\lambda s_1|_Y=0$ for some $\lambda \in \kcirc \setminus \{0\}$.

To construct the affine invariant neighborhood of $z$, we assume first that $z$ is contained in the generic fibre of $\Ucal$ over $\kcirc$. 
Then $\Ucal_1:=\{x\in \Ycal_{A,a} \mid \lambda s_1(x)\neq 0\}$ is  an affine open subset of $\Ucal$ that contains $z$. Since $s_1$ is semi-invariant, it follows that $\Ucal_1$ is $\mb{T}$-invariant proving the claim.  

Now we suppose that $z$ is contained in the special fibre $\Ucal_s$. Let $\zeta$ be a generic point of an irreducible component $Z$ of $Y_s$. Since $Y$ is $\tdop$-invariant, $\zeta$ is the generic point of an orbit whose closure $Z$ does not contain $z$. 
Using the orbit--face correspondence from Proposition \ref{projectivetoric}, there is a projective coordinate $x_{i(\zeta)}$ such that $x_{i(\zeta)}(Z) = 0$ but $x_{i(\zeta)}(z) \neq 0$. By definition of $\Ycal_{A,a}$, we may view $x_{i(\zeta)}$ as a semi-invariant global section of $\Ocal(1)$ on $\pdop_\kcirc^R$. Letting $\zeta$ varying over the generic points of the irreducible components of $Y_s$, we get   a semi-invariant global section $s:= s_1 \cdot \prod_\zeta s_{i(\zeta)}$ of a suitable tensor power of $\Ocal(1)$ on $\pdop_\kcirc^R$ with $s(z) \neq 0$ and $s|_Y=0$. Then $\Ucal_1:=\{x \in \Ycal_{A,a} \mid  s(x)\neq 0\}$ is a $\tdop$-invariant affine open neighbourhood of $z$ in $\Ucal$. \qed

\begin{proof}[Proof of Theorem \ref{sumihiro1}] We are now ready to prove Sumihiro's theorem for a normal $\tdop$-toric variety $\Ycal$ over $\kcirc$. Every point $z \in \Ycal$ has an affine open neighbourhood $\Ucal_0$.  Let $\Ucal$ be the smallest $\tdop$-invariant open subset of $\Ycal$ containing $\Ucal_0$. By Proposition \ref{the A-structure}, there is an equivariant open immersion $i: \Ucal \rightarrow \Ycal_{A,a}$ for suitable $A \in M^{R+1}$ and height function $a$. By Proposition \ref{quasiprojective Sumihiro}, there is a $\tdop$-invariant open neighbourhood $\Ucal_1$ of $z$ in $i(\Ucal)$. We conclude that $i^{-1}(\Ucal_1)$ is an affine $\tdop$-invariant open neighbourhood of $z$ in $\Ucal$ proving Sumihiro's theorem. 
\end{proof}

Finally in order to complete the picture which gives rise to the interplay between toric geometry and convex geometry, we prove Theorem \ref{sumihiro2} which give us a bijective correspondence between normal $\mb{T}$-toric varieties and $\Gamma$-admissible fans.

\begin{proof}[Proof of Theorem \ref{sumihiro2}] 
We assume first that $v$ is not a discrete valuation. 
For simplicity, we fix torus coordinates on the split torus $\tdop$ of rank $n$. 
Let $\ms{Y}$ be a normal $\mb{T}$-toric variety. By Theorem \ref{sumihiro1}, $\Ycal$ has an open covering $\{\Vcal_i\}_{i \in I}$ by affine $\tdop$-varieties $\Vcal_i$. By Theorem \ref{cone}, we have $\Vcal_i \cong \Vcal_{\sigma_i}$ for a $\Gamma$-admissible cone $\sigma_i$ in $\rdop^n \times \rdop_+$ for which the vertices of  $\sigma_i \cap (\rdop^n \times \{1\})$ are contained in $\Gamma^n \times \{1\}$. Since $\Ycal$ is separated, $\Vcal_{ij}:=\Vcal_i \cap \Vcal_j$ is affine for every $i,j \in I$. We conclude that $\Vcal_i \cap \Vcal_j$ is an affine normal $\tdop$-toric variety and hence Theorem \ref{cone} again shows $\Vcal_{ij} \cong \Vcal_{\sigma_{ij}}$ for a $\Gamma$-admissible cone $\sigma_{ij}$ in $\rdop^n \times \rdop_+$. Applying the orbit--face correspondence from \cite[Proposition 8.8]{gubler12} to the open immersions $\Vcal_{ij} \rightarrow \Vcal_{i}$ and $\Vcal_{ij} \rightarrow \Vcal_{j}$, it follows that $\sigma_{ij}$ is a closed face of $\sigma_i$ and $\sigma_j$. Moreover, the same argument shows that $\sigma_{ij} = \sigma_i \cap \sigma_j$ and hence the closed faces of all $\sigma_i$ form a $\Gamma$-admissible fan $\Sigma$ in $\rdop^n \times \rdop_+$ with $\Ycal_\Sigma \cong \Ycal$. From Theorem \ref{cone}, we get now easily the desired bijection. If $v$ is a discrete valuation, then the same argument works if we omit the additional condition on the vertices of the cones.
\end{proof}

\bibliographystyle{plain}
\bibliography{gubler_soto}

{\small Walter Gubler, Fakult\"at f\"ur Mathematik,  Universit\"at Regensburg,
Universit\"atsstrasse 31, D-93040 Regensburg, walter.gubler@mathematik.uni-regensburg.de}

{\small Alejandro Soto,  {Department of Mathematics, KU Leuven, Celestijnenlaan 200B, B-3001
Heverlee, alejandro.soto@wis.kuleuven.be}}

\end{document}